\let\d=\partial
\let\eps=\varepsilon
\let\wt=\widetilde
\def\cC{{\mathcal C}}
\def\cP{{\mathcal P}}
\def\cQ{{\mathcal Q}}
\def\cS{{\mathcal S}}
\def\cT{{\mathcal T}}
\def\N{{\mathbb N}}
\def\R{{\mathbb R}}
\def\Z{{\mathbb Z}}
\def\virgp{\raise 2pt\hbox{,}}
\def\cdotpv{\raise 2pt\hbox{;}}
\def\Id{\mathop{\rm Id}\nolimits}
\def\div{ \hbox{\rm div}\,  }
\def\ddj{\dot \Delta_j}
\def\na{\nabla}
\def\lam{\lambda}
\def\Lam{\Lambda}
\def\pa{\partial}
\def\f{\frac}
\def\eqdefa{\buildrel\hbox{\footnotesize def}\over =}
\newtheorem{thm}{Theorem}[section]
\newtheorem{lem}{Lemma}[section]
\newtheorem{rmk}{Remark}[section]
\newtheorem{prop}{Proposition}[section]
\newcommand{\ben}{\begin{eqnarray}}
\newcommand{\een}{\end{eqnarray}}
\newcommand{\beno}{\begin{eqnarray*}}
\newcommand{\eeno}{\end{eqnarray*}}
\begin{document}
\title[On $\mbox{(CNS)}$ in $L^p$ type critical spaces]
{Global stability of large solutions to the 3D compressible Navier-Stokes equations}

\author[L. He]{Lingbing He}
\address[L. He]{Department of Mathematical Sciences, Tsinghua University\\
Beijing 100084,  P. R.  China.} \email{lbhe@math.tsinghua.edu.cn}
\author[J. Huang]{Jingchi Huang}
\address[J. Huang]{ School of Mathematics, Sun Yat-sen University, Guangzhou Guangdong 510275, P. R. China. } \email{huangjch25@mail.sysu.edu.cn}
 \author[C. Wang]{Chao Wang}
\address[C. Wang]{ School of Mathematical Sciences, Peking University\\
 Beijing 100871,  P. R.  China.} \email{wangchao@math.pku.edu.cn }

\begin{abstract}
The present paper aims at the investigation of the global stability of large solutions to the compressible Navier-Stokes equations in the whole space. Our main results and innovations  can be concluded as follows:
\smallskip

 \noindent $\bullet$ Under the assumption that the density $\rho(t,x)$ verifies  $\rho(0,x)\ge c>0$ and $\sup_{t\ge0}\|\rho(t)\|_{C^\alpha}\le M$ with $\alpha$ sufficiently small,  we establish a new mechanism for the convergence of the solution  to its associated equilibrium  with  an explicit decay rate  which is as the same as that for the heat equation.  The main idea of the proof relies on the basic energy identity,  techniques from blow-up criterion and a new estimate for the low frequency part of the solution.
 
 \noindent $\bullet$ We prove the global-in-time stability for the equations, i.e,  any perturbed solution will remain close to the reference solution if initially they are close to each other.  Our result implies that the set of the smooth and bounded solutions is an open set.

\noindent $\bullet$ Going beyond the close-to-equilibrium setting, we construct the global large solutions to the equations with a class of initial data in $L^p$ type critical spaces.  Here the ``large solution" means that the vertical component of the  velocity could be arbitrarily large initially.

\end{abstract}

 \maketitle

\section{introduction}
In this paper, we are concerned with the global stability of the large solutions to  3-D barotropic compressible Navier-Stokes equations:
$$
 \left\{\begin{array}{l}
\partial_t\rho+{\rm div}(\rho  u )=0,\\[0.5ex]\displaystyle
\partial_t(\rho  u)+{\rm div} (\rho  u \otimes u )
-\div\bigl(2\mu(\rho )D(u )+\lambda(\rho )\div u\Id\bigr)+\nabla P=0,\\
 \lim\limits_{|x|\rightarrow \infty} \rho=1,
\end{array}
\right.\leqno(CNS)
$$
where $\rho=\rho(t,x)\in\R^+$ stands for the  density,
$u =u(t,x)\in\R^3$ is the  velocity field and  the pressure $P$ is given by smooth function $P=P(\rho)$. Here we take $P(\rho)=\rho^\gamma$ for $\gamma\geq1$. The bulk and shear viscosities are given by
$\lambda=\lambda(\rho)$ and $\mu=\mu(\rho)$ which satisfy 
$
\mu>0$ and $\lambda+2\mu>0.
$
In our paper, we assume that $\mu$ and $\lambda$ are two constants. Finally, $D(u)$ stands for the deformation tensor, that is $(D(u ))_{ij}:=\frac12(\d_iu^{j}+\d_ju^{i}).$

\subsection{Short review of the system $\mbox{(CNS)}$}
 There  are huge number of literatures on the barotropic compressible Navier-Stokes equations. Here we only review some results which are related to our stability result.

\subsubsection{Well-posedness results in Sobolev spaces} The local well-posedness for the system $\mbox{(CNS)}$
was proved by Nash \cite{Nash} for the smooth initial data which is away from vacuum. For the global smooth solutions, it was first proved by Matsumura and Nishida in \cite{MN1, MN2}
if the initial data is close to equilibrium in $H^3\times H^3$. For the small energy, Zhang \cite{Zhang} and Huang, Li and Xin \cite{Huang} proved the global existence and uniqueness of $\mbox{(CNS)}$. For the general initial data, if $\mu=const, \lambda(\rho)=b \rho^\beta$, the authors in \cite{Huang1, VK} established the global existence and uniqueness of classical solutions for large initial data   in two dimension.  

\subsubsection{Well-posedness results in Critical spaces}  To catch the scaling invariance property of the system $\mbox{(CNS)}$, Danchin first introduced  in his series papers \cite{Dan-Inve,Dan-CPDE, Dan-ARMA, Dan-CPDE07} the ``Critical Spaces'' which were inspired by the results for  the incompressible Navier-Stokes.  More precisely, he proved the local well-posedness of  $\mbox{(CNS)}$ in the critical Besov spaces $\dot B^{\f 3p}_{p,1}\times \dot B^{\f3p-1}_{p,1}$ with $2\le p<6$,
and global well-posdenss of  $\mbox{(CNS)}$ for the initial data close to a stable equilibrium in spaces
$(\dot B^{\f12}_{2,1}\cap \dot B^{\f32}_{2,1}) \times \dot B^{\f12}_{2,1} $. We mention that when $p>3$, the Besov space $\dot B^{\f3p-1}_{p,1}$  contains the data which allows to have high oscillation. A typical example is
\begin{align*}
u_0(x)= \sin\bigl(\f {x_3} {\varepsilon}\bigr)(-\pa_2\phi(x), \pa_1\phi(x),0)
\end{align*}
where $\phi\in \cS(\R^3)$ and $\varepsilon>0$ is small enough.  It is easy to check that $\|u_0\|_{\dot B^{\f3p-1}_{p,1}}\sim \varepsilon^{1-\f3p},$ but $\|u_0\|_{\dot H^{\f12}}\sim \varepsilon^{-\f12}$ can be arbitrarily large. The authors in \cite{Charve, CMZ} constructed global solutions of  $\mbox{(CNS)}$ with the highly oscillating initial data. Later, Haspot in \cite{Has} gave an alternative proof to the similar result by using the viscous effective flux. Very recently, the authors in \cite{Dan-He} generalized the previous results to allow that the incompressible part of the velocity belongs to $\dot B^{\f 3p-1}_{p,1}$ with $p\in[2,4]$. In this case, by taking the low mach number limit, the solutions for $\mbox{(CNS)}$ will converge to the solutions for the incompressible Navier-Stokes equations  constructed by Cannone, Meyer and Planchon in \cite{Can, CMP}.    Finally we mention the work \cite{FZZ} on the construction of the large solutions to  $\mbox{(CNS)}$ based on the dispersion property of acoustic waves.
 
\subsubsection{Results on global dynamics and the stability}  To our best knowledge, almost all the results on the global dynamics  and the stability  are restricted to the regime which is near the equilibrium.  Thus the method relies heavily on the analysis of the linearization of the system and   the standard perturbation framework.   We refer readers to  \cite{DUYZ, Kobayashi1,Kobayashi2,Kobayashi3,Kobayashi4, MN1, MN2,Ponce} and reference therein for   details. 

 Assume that the initial data $(\rho_0, u_0)$ is a small perturbation of equilibrium $(\rho_\infty, 0)$ in $L^p(\R^3)\times H^3(\R^3)$ with $p\in[1,2]$.  Previous results show that \ben\label{decay}
\|(\rho-\rho_\infty, u)(t)\|_{L^2}\leq C(1+t) ^{-\f34(\f2{p}-1)}.
\een
It discloses that in the close-to-equilibrium setting the   rate of the convergence of  the solution is as the same as that for the heat equations if the initial data is in $L^p(\R^3)$.  In this sense, \eqref{decay} is the optimal decay estimate for system $\mbox{(CNS)}$. Recently, Danchin and Xu in \cite{DX} generalized this estimate  in the critical $L^p$ framework based on the existence result established in \cite{Dan-He}.

Beyond the close-to-equilibrium regime, there exist two results on the longtime behavior of the solution of $\mbox{(CNS)}$ for the general initial data   in bounded domains. The first result is due to Fang, Zhang and Zi. In \cite{FZZ1}, they showed that the weak solutions constructed by P.-L. Lions and improved by E. Feireisl  decayed exponentially to equilibrium in $L^2$ space, under homogeneous Dirichlet boundary conditions and the condition that the density is bounded from above.  The key idea of the proof lies in the Poincare inequality, the energy identity and the Bogovskii operator $\mathfrak{B}$ to get the integrability of $\rho$. The second result is due to Villani. By using the hypo-coercivity, in \cite{villani}, he proved that if the solution remains smooth and bounded in the torus, then it will converge to its associated equilibrium  with  algebraic rate.

Unfortunately both of methods used in \cite{FZZ1} and \cite{villani} cannot be applied to get the strong stability in the whole space. We first note that to get the longtime behavior of the solution both methods rely more or less on the fact that domain is bounded. For instance,  the Poincare inequality  and the $L^p$ bound of  Bogovskii operator $\mathfrak{B}$  are used which do not hold in the whole space. Secondly the propagation of the regularity is not considered in both papers which is essential to prove the global-in-time stability. These show that previous work cannot help a lot to prove the desired result and we need some new idea.

\subsection{Main idea and strategy}  The present work aims at  the investigation of  the  global-in-time stability of the large solutions to $\mbox{(CNS)}$.  The key point to solve the problem lies in the global dynamics of the system $\mbox{(CNS)}$:  the propagation of the regularity and the mechanism for the convergence to the equilibrium. 

Our strategy is carried out by three steps:  getting   uniform-in-time bounds for the propagation of the regularity, deriving a dissipation inequality  and using time-frequency splitting method to obtain a new mechanism for the convergence to the equilibrium with quantitative estimates. 

  To obtain uniform-in-time bounds for the solution, we borrow some techniques from the blow-up criterion for the system used in \cite{Huang,Huang1,SWZ1,SWZ2}. There the authors proved that the upper bound of the density will control the propagation of the regularity.  But they failed to get the uniform-in-time  bounds.  Under the assumption  
  that the density is bounded  uniformly in time in $C^\alpha$ with  $\alpha $ sufficiently small, which is a little stronger than  the assumption used in \cite{Huang,Huang1,SWZ1,SWZ2}, 
we succeed to prove the  uniform-in-time bounds  for the propagation of the regularity. We remark that here the key idea is making full use  of  the basic energy identity and the coupling effect of the system.  

Each time when    the uniform-in-time bounds  for the  regularity  of the solution are improved,  the  dissipation inequality can also be improved correspondingly. Thanks to this observation, finally we obtain that
\beno \f{d}{dt} \|(\rho-1, u)\|^2+\|\na (\rho-1, u)\|^2\le 0,  \eeno
which enjoys the same structure as that for the heat equation. 
Now the time-frequency splitting method is evoked.  More precisely, the dissipation inequality in the above can be recast by 
 \ben\label{ds1}  \f{d}{dt} \|(\rho-1, u)\|^2+\f{1}{1+t}\|  (\rho-1, u)\|^2\le \f{1}{1+t} \int_{|\xi|\le (1+t)^{-\f12} }  \big|(\widehat{\rho-1}, \hat{u})(\xi)\big|^2
 . \een 
The problem of the convergence  is reduced to the estimate of the low frequency part of the solution which is easy to do it for the linear equation. For the non-linear equation in the regime which is far away from the  equilibrium, our new idea is   making full use of the cancellation and the coupling effect of the system $\mbox{(CNS)}$  to get the control of the righthand side of \eqref{ds1}(see Lemma \ref{conlf}).
As a result, we obtain the optimal decay estimate \eqref{decay}.  
We comment that the control of the righthand side of \eqref{ds1}  is comparable to the one due to Schonbek in \cite{Schon}  for the incompressible NS equations. It is robust considering that we only request that the density is bounded from above uniformly in time.  

\smallskip

Once the global dynamics of the equations is clear, we can prove the global-in-time stability  for the system  $\mbox{(CNS)}$. The strategy  falls into three steps:
\begin{enumerate}
	\item By the local well-posedness for the system $\mbox{(CNS)}$,  we can show that the perturbed solution will remain close to the reference solution for a long time if initially they are close. 
	\item The  mechanism for the convergence implies that the reference solution is close to the  equilibrium after a long time. 
	\item  Combining these two facts, we can find a time $t_0$ such that $t_0$ is far away from the initial time and  at this moment the solution is close to the equilibrium. Then it is not difficult to prove the global existence  in the perturbation framework.
	\end{enumerate}

\smallskip

To show that our result on the global-in-time stability has wide application, we construct the large solutions to the system $\mbox{(CNS)}$ with the initial data  in some $L^p$ critical spaces. 
Our main idea is inspired by \cite{PZ}. The main observation lies in  two aspects. The first one is that  the equation for $(\cP u)^3$, the vertical component of the incompressible part of the velocity $u$, is actually a linear equation. The second one is that there is no quadratic term for $(\cP u)^3$ in the nonlinear terms.  Motived by these two facts, we construct a global solution for compressible Navier-Stokes equations with the initial data such that  $(\cP u_0)^3$ could be arbitrarily large.  Obviously such kind of the solution  initially is far away from the equilibrium.

\subsection{Function spaces and main results}\label{s:main} Before we state our results, let us introduce the notations and  function spaces which will be used throughout the paper.  
   We denote the multi-index $\alpha =(\alpha _1,\alpha _2,\alpha _3)$ with
$|\alpha |=\alpha _1+\alpha _2+\alpha _3$.   We use the notation $a\sim b$ whenever $a\le C_1 b$ and $b\le C_2
a$ where $C_1$ and $C_2$ are universal constants.    We denote $C(\lambda_1,\lambda_2,\cdots, \lambda_n)$ by a constant depending on   parameters $\lambda_1,\lambda_2,\cdots, \lambda_n$. If  $u=(u^1,u^2,u^3)$, then we set $u^h\eqdefa (u^1,u^2)$.

 We recall that an homogeneous Littlewood-Paley decomposition
$(\ddj)_{j\in\Z}$ is a dyadic decomposition in the Fourier space for $\R^d.$
One may for instance set $\ddj:=\varphi(2^{-j}D)$ with $\varphi(\xi):=\chi(\xi/2)-\chi(\xi),$
and $\chi$ a non-increasing nonnegative smooth function supported in $B(0,4/3),$ and with
value $1$ on $B(0,3/4)$  (see \cite{Bah}, Chap. 2 for more details).
We then define, for  $1\leq p,r\leq \infty$  and $s\in\R,$ the semi-norms
$$
\|z\|_{\dot B^s_{p,r}}:=\bigl\|2^{js}\|\ddj z\|_{L^p(\R^d)}\bigr\|_{\ell^r(\Z)}.
$$
Like in \cite{Bah},  we adopt the following definition of homogeneous
Besov spaces, which turns out to be well adapted to the study of nonlinear PDEs:
$$
\dot B^s_{p,r}=\Bigl\{z\in\cS'(\R^d) : \|z\|_{\dot B^s_{p,r}}<\infty \ \hbox{ and }\ \lim_{j\to-\infty}
\|\dot S_jz\|_{L^\infty}=0\Bigr\}
\quad\hbox{with }\ \dot S_j:=\chi(2^{-j}D).$$
As we shall work with \emph{time-dependent functions} valued in Besov spaces,
we  introduce the norms:
$$
\|u\|_{L^q_T(\dot B^s_{p,r})}:=\bigl\| \|u(t,\cdot)\|_{\dot B^s_{p,r}}\bigr\|_{L^q(0,T)}.
$$
As  pointed out in \cite{Chemin-JDE},  when  using parabolic estimates in Besov spaces, it is somehow natural
to take the time-Lebesgue norm \emph{before} performing the summation for computing
the Besov norm.  This motivates our introducing the following quantities:
$$
 \|u\|_{\wt L_T^q(\dot {B}^{s}_{p,r})}:= \bigl\|(2^{js}\|\ddj u\|_{L_T^q(L^p)})\bigr\|_{\ell^r(\Z)}.
$$
The index $T$ will be omitted if $T=+\infty$ and we shall denote by $\wt\cC_b(\dot B^s_{p,r})$
 the subset of functions of  $\wt L^\infty(\dot B^s_{p,r})$
which are also  continuous from  $\R_+$ to $\dot B^s_{p,r}$.

Let us emphasize that, owing to Minkowski inequality, we have if  $r\le q$,
$$
 \|z\|_{L_T^q(\dot {B}^{s}_{p,r})}\leq \|z\|_{\wt L_T^q(\dot {B}^{s}_{p,r})}
$$
with equality if and only if $q=r.$ Of course, the opposite inequality occurs if $r\geq q.$
\medbreak
An important  example where those nonclassical norms  are  suitable is the heat equation
\begin{equation}\label{eq:heat0}
\d_tz-\mu\Delta z=f,\qquad z|_{t=0}=z_0
\end{equation}
for which the following family of inequalities holds true:
\begin{equation}\label{eq:heat}
 \|z\|_{\wt L_T^m(\dot {B}^{s+2/m}_{p,r})}\leq C\bigl(\|z_0\|_{\dot B^s_{p,r}}
 + \|f\|_{\wt L_T^1(\dot {B}^{s}_{p,r})}\bigr)
\end{equation}
for any $T>0,$ $1\leq m,p,r\leq\infty$ and $s\in\R.$

\medbreak

  Now we are in a position to state our main results on the system $\mbox{(CNS)}$. 
Our first result is concerned with  the global dynamics of the equation.
\begin{thm}\label{thm:main1}
Let $\mu>\frac12 \lambda,$ and $(\rho,u)$ be a global and smooth  solution of $\mbox{(CNS)}$ with initial data $(\rho_0,u_0)$ where $\rho_0\geq c>0.$ Suppose the admissible condition  holds:
\ben\label{admissible}
u_t|_{t=0}=-u_0\cdot\na u_0+\f1{\rho_0}Lu_0-\f1{\rho_0}\na \rho^\gamma_0,
\een
where operator $L$ is defined by $Lu=-\div(\mu\na u)-\na((\lambda+\mu)\div u).$ Assume that  $a\eqdefa \rho-1$,  and $\|\rho\|_{C^\alpha}\leq M$ for some $0<\alpha<1.$ Then if $a_0, u_0\in L^{p_0}(\R^3)\cap H^2(\R^3)$ with $p_0\in [1,2)$, we have
\begin{enumerate}
\item {\bf (Lower bound of the density) } There exists a positive constant $ \underline{\rho}=\underline{\rho}(c,M)$ such that for all $t\ge0$,  $\rho(t)\ge \underline{\rho}$.
\item {\bf (Uniform-in-time bounds for the regularity of the solution)}
\beno \|a\|_{L^\infty_t H^2}^2+\|u\|_{L^\infty_t H^2}^2+\int_0^\infty (\|a\|_{ H^2}^2+\|\na u\|_{  H^2}^2 )d\tau\le C(\underline{\rho}, M, \|a_0\|_{L^{p_0}\cap H^2},\|u_0\|_{L^{p_0}\cap H^2} ).\eeno
\item {\bf (Decay estimate for the solution)}
\begin{equation}\label{decayest}
\|u(t)\|_{H^1} + \|a(t)\|_{H^1} \leq C(\underline{\rho}, M, \|a_0\|_{L^{p_0}\cap H^1},\|u_0\|_{L^{p_0}\cap H^2} )(1+t)^{-\beta({p_0})}, \end{equation}
where  $\beta({p_0})=\f34(\f2{p_0}-1)$.
\item{\bf (Decay estimates in  Critical spaces and the control of $\|\na u\|_{L^1((0,\infty); L^\infty)}$) }  
\ben
&&  \|u(t)\|_{\dot{B}^{\f12}_{2,1}} +\|a(t)\|_{\dot{B}^{\f12}_{2,1}}\leq C(\underline{\rho}, M, \|a_0\|_{L^{p_0}\cap H^2},\|u_0\|_{L^{p_0}\cap H^2} )(1+t)^{-\beta({p_0})}, \label{decayest1}\\
&& \|a(t)\|_{ \dot{B}^{\f32}_{2,1}} \leq C(\underline{\rho}, M, \|a_0\|_{L^{p_0}\cap H^2},\|u_0\|_{L^{p_0}\cap H^2} )(1+t)^{-\f12\beta({p_0})},\label{decayest4}\\
&&\int_0^\infty \|\na u\|_{L^\infty} dt \le C(\beta({p_0}), \underline{\rho}, M, \|a_0\|_{L^{p_0}\cap H^2},\|u_0\|_{L^{p_0}\cap H^2}).
\label{lip}
\een

\end{enumerate}
 \end{thm}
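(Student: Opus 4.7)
The four assertions form a hierarchy: (1) and (2) are the ``propagation'' half, which turn the nonlinear equations for $(a,u)$ into a system where the linear dissipation dominates, yielding the heat-type inequality
\beno
\f{d}{dt}\bigl(\|a\|_{L^2}^2+\|u\|_{L^2}^2\bigr)+c\bigl(\|\na a\|_{L^2}^2+\|\na u\|_{L^2}^2\bigr)\le 0.
\eeno
A Schonbek-type time--frequency splitting then delivers (3), and (4) follows by Besov interpolation together with the parabolic maximal regularity \eqref{eq:heat}.

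\textbf{Steps (1) and (2): propagation.} Along the Lagrangian flow $X(t,x)$ of $u$ the continuity equation reads $\f{d}{dt}\log\rho(t,X)=-\div u$, so the lower bound reduces to $\int_0^\infty\|\div u\|_{L^\infty}dt<\infty$. Following the blow-up-criterion technique of \cite{Huang,Huang1,SWZ1,SWZ2}, I would use the effective viscous flux $F\eqdefa(\la+2\mu)\div u-(\rho^\gamma-1)$, which satisfies $\Delta F=\div(\rho\dot u)$: Schauder and Calderón--Zygmund estimates combined with $\rho\in C^\alpha$ bound $\|\div u\|_{L^\infty}$ by suitable norms of $\rho\dot u$ plus a $C^\alpha$-norm of $\rho$, the smallness of $\alpha$ absorbing the logarithmic loss in the $L^\infty$-CZ inequality. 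Time integrability then reduces to $\dot u\in L^2_t(L^2)$, which is part of (2). For (2), I would chain three energy identities: the basic identity
\beno
\f{d}{dt}\int\bigl(\tfrac12\rho|u|^2+\Pi(\rho)\bigr)dx+\int\bigl(\mu|\na u|^2+(\la+\mu)|\div u|^2\bigr)dx=0
\eeno
(with $\Pi''(\rho)=P'(\rho)/\rho$), which gives $\na u\in L^2_tL^2$; a first-order identity obtained by testing the momentum equation against $\dot u$ (the admissibility condition \eqref{admissible} supplies the initial value of $\pa_t u$), producing $\sqrt\rho\,\dot u\in L^\infty_tL^2\cap L^2_tL^2$ and hence, by elliptic regularity on the momentum equation, $\na u\in L^\infty_tH^1$; and second-order estimates for $\na^2 a,\na^2 u$ obtained by commuting $\na^2$ with the system. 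The only dangerous commutators involve $\na a$ and are tamed only when $\alpha$ is small --- this is the sole role of the smallness of $\alpha$. A Gronwall argument fed by the a priori $L^2_tL^2$ bound on $\na u$ then closes everything uniformly in $t$.

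\textbf{Step (3): decay by Schonbek splitting.} Rewriting the dissipation inequality as in the introduction,
\beno
\f{d}{dt}\|(a,u)\|_{L^2}^2+\f{1}{1+t}\|(a,u)\|_{L^2}^2\le \f{1}{1+t}\int_{|\xi|\le R(t)}\bigl(|\hat a|^2+|\hat u|^2\bigr)d\xi,\quad R(t)\eqdefa(1+t)^{-1/2},
\eeno
I would Duhamel-expand $(a,u)$ against the semigroup of the linearised compressible system about $(1,0)$. The linear low-frequency contribution is dominated by $\|(a_0,u_0)\|_{L^{p_0}}(1+t)^{-\beta(p_0)}$ via Young and the $L^{p_0}\to L^\infty$ bound on the Fourier transform. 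The main obstacle --- precisely the coupling/cancellation point highlighted by the authors --- is the nonlinear piece: rewriting $\pa_t a+\div((1+a)u)=0$ and, after regrouping, the nonlinear part of the momentum equation as $\div(\cdots)$ (collecting $a u\otimes u$, the quadratic remainder of $\na P(\rho)$, and the $a$-factors in front of $\Delta u$ and $\na\div u$) produces a divergence and hence an extra $|\xi|\le R(t)$ factor inside the low-frequency integral. The nonlinear low-frequency contribution therefore carries an additional $(1+t)^{-1/2}$ and can be absorbed by Gronwall on the weighted quantity $(1+t)^{2\beta(p_0)}\|(a,u)\|_{L^2}^2$, yielding $L^2$ decay at the stated rate. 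Interpolating against the uniform $H^2$ bound from (2) upgrades to the $H^1$ rate \eqref{decayest}.

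\textbf{Step (4): critical decay and Lipschitz control.} For \eqref{decayest1}, combine the $H^1$ decay of (3) with the uniform $H^2$ bound of (2) by Besov interpolation, transmitting the rate $(1+t)^{-\beta(p_0)}$ to $\dot B^{1/2}_{2,1}$. For \eqref{decayest4}, since $a$ enjoys no extra dissipation (it is transported), interpolation between $\|a\|_{H^1}\lesssim (1+t)^{-\beta(p_0)}$ and $\|a\|_{H^2}\lesssim 1$ exactly halves the exponent: $\|a\|_{\dot B^{3/2}_{2,1}}\lesssim \|a\|_{H^1}^{1/2}\|a\|_{H^2}^{1/2}\lesssim (1+t)^{-\beta(p_0)/2}$. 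Finally, for \eqref{lip}, apply \eqref{eq:heat} to the momentum equation read as a heat equation with quadratic source: the decays already established place the forcing in $\wt L^1(\dot B^{1/2}_{2,1})$, so $u\in\wt L^1(\dot B^{5/2}_{2,1})$, and $\|\na u\|_{L^1(L^\infty)}\lesssim \|\na u\|_{L^1(\dot B^{3/2}_{2,1})}<\infty$ by Besov embedding. The initial interval $[0,1]$ is handled directly by (2); the real work is on $[1,\infty)$, where the decay of the nonlinear source does the job.
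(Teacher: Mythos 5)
Your overall architecture tracks the paper's (energy chains, effective viscous flux, Schonbek splitting, critical-space interpolation plus a small-data endgame), but two of your steps would not close as written. In Step~(3) you obtain the $L^2$ decay at rate $\beta(p_0)$ and then claim "interpolating against the uniform $H^2$ bound from (2) upgrades to the $H^1$ rate". Interpolation only gives $\|f\|_{H^1}\lesssim\|f\|_{L^2}^{1/2}\|f\|_{H^2}^{1/2}\lesssim(1+t)^{-\beta(p_0)/2}$, \emph{half} the asserted rate in \eqref{decayest}. The paper avoids this by constructing a Lyapunov functional $X(t)\sim\|u\|_{H^1}^2+\|a\|_{H^1}^2+\|\dot u\|_{L^2}^2$ whose dissipation already controls $\|\nabla u\|_{L^2}^2+\|\nabla a\|_{L^2}^2+\|\nabla\dot u\|_{L^2}^2$ (Proposition~\ref{energy_prop_main}), and applies the time--frequency splitting to $X$ directly; this in turn forces one to bound $\int_{S(t)}|\hat{\dot u}|^2\,d\xi$ as well (Proposition~\ref{decayprop}, Step~1). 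The halved rate from interpolation is correct, and used, only for the $\dot B^{3/2}_{2,1}$ estimate \eqref{decayest4}. A second issue in the same step: the Duhamel expansion you propose is in the variables $(a,u)$, and once the momentum equation is divided by $\rho$ the nonlinear residual contains $\mu(\rho^{-1}-1)\Delta u$, which is \emph{not} a divergence and, being a genuine Fourier-side convolution, acquires no $|\xi|$ or $|\xi|^2$ factor at small $\xi$. The paper computes the Fourier energy of $(a,\rho u)$ instead (Lemma~\ref{conlf}): in the conservative variables the continuity equation is exactly linear, the pressure term $i\gamma\xi\hat a\cdot\overline{\widehat{\rho u}}$ cancels the mass-flux term $i\gamma\xi\cdot\widehat{\rho u}\,\bar{\hat a}$ after pairing, and the viscous nonlinearity enters as $\widehat{\Delta u}\cdot\overline{\widehat{au}}=-|\xi|^2\hat u\cdot\overline{\widehat{au}}$ with the $|\xi|^2$ explicit. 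This is the cancellation the paper emphasizes; with the velocity variable it is not available.

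For \eqref{lip} in Step~(4), you read the momentum equation as a heat equation and claim the forcing is in $\widetilde L^1(\dot B^{1/2}_{2,1})$. The forcing contains $\gamma\nabla a$, and by \eqref{decayest4} the best available is $\|\nabla a(t)\|_{\dot B^{1/2}_{2,1}}=\|a(t)\|_{\dot B^{3/2}_{2,1}}\lesssim(1+t)^{-\beta(p_0)/2}$ with $\beta(p_0)/2\le 3/8<1$; this is never integrable in time for any $p_0\in[1,2)$, so parabolic maximal regularity alone does not close. The paper's route is different: the decay estimates (\ref{decayest1})--(\ref{decayest4}) guarantee a time $t_0$ at which $(a,u)(t_0)$ is \emph{small} in the critical norms, the small-data global well-posedness of \cite{Charve,CMZ} (which treats the full coupled hyperbolic--parabolic system at the correct scaling) then gives $\nabla u\in L^1((t_0,\infty);L^\infty)$, and $[0,t_0]$ is handled by the $L^2_t L^\infty$ bound on $\nabla u$ from Proposition~\ref{prop_density}. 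Some such small-data pivot appears to be unavoidable here; your heat-equation shortcut does not supply it.
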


Several remarks are in order:
\begin{rmk}   We fail to quantify the dependence of $\underline{\rho}$ on  $c$ and $M$. The reason results from the  fact that $\lim\limits_{t\rightarrow\infty} \|\rho^\gamma(t)-1\|_{L^6}=0$ is used in the proof,  which shows that the lower bound of $\rho$ depends on the solution itself.  \end{rmk}

\begin{rmk} Once the constants $\underline{\rho}$ and $M$ are fixed, our theorem shows that all the upper bounds obtained in the theorem depend only on the initial data.  \end{rmk}

\begin{rmk}  Since \eqref{decayest} implies   \eqref{decay}, our decay estimate is optimal in some sense. The control of $\|\na u\|_{L^1((0,\infty); L^\infty)}$ implies that regularity of the solution can be propagated uniformly in time.
\end{rmk}	
 \begin{rmk}
The  method employed here is robust and can be  applied to the case that  $\mu$ and $\lambda$ are smooth functions and to the full Navier-Stokes equations. 
\end{rmk}

Next we want to state our global-in-time stability result for the system  $\mbox{(CNS)}$.  To prove the result in the largest function space, we solve the problem in   critical spaces. From now on, we agree that for $z\in\cS'(\R^d),$
\begin{equation}
\label{eq:decompo}  z^{\ell }:=\sum_{2^j \leq R_0} \ddj z\quad\mbox{and}\quad z^{H }:=\sum_{2^j >R_0} \ddj z,
\end{equation}
 for some large enough nonnegative integer $R_0$  depending only on $p,$ $d$.
 The corresponding  ``truncated'' semi-norms are defined  as follows:
\beno \|z\|^{\ell }_{\dot B^{\sigma}_{p,r}}:=  \|z^{\ell }\|_{\dot B^{\sigma}_{p,r}}
\ \hbox{ and }\   \|z\|^{H}_{\dot B^{\sigma}_{p,r}}:=  \|z^{H}\|_{\dot B^{\sigma}_{p,r}}. \eeno
To simplify the notation, we introduce the notiaion:
 \beno
 \|u\|_{\dot B^{s,t}_{r,p}}\eqdefa \|u\|^{\ell}_{\dot B^{s}_{r,1}}+ \|u\|^H_{\dot B^{ t}_{p,1}}.
 \eeno

Our second result can be stated as follows:
\begin{thm}\label{thm:main2}
Let $(\bar\rho, \bar u)$ be a global and smooth solution for the \mbox{(CNS)} with the initial data $(\bar \rho_0, \bar u_0)$ verifying that   \ben\label{assm}
  \|\f{1}{\bar \rho}, \bar \rho, \na \bar\rho \|_{\wt  L^\infty_t(\dot B^{\f12,\f3p}_{2,p}) }+\|\bar u, \bar u_t\|_{\wt  L^\infty_t(\dot B^{\f12,\f3p-1 }_{2,p})}+\|\bar u\|_{\wt L^\infty_t(\dot B^{\f52,\f3p+1}_{2,p})}\leq C, 
  \een
  where $2\leq p\leq4$. Assume that $(\bar\rho_0-1, \bar u_0)\in L^{p_0}(\R^3)\cap H^2(\R^3)$ with $p_0\in [1,2)$. There exists a $\eps_0=\eps_0(C)$ depending only on $C$ such that for any $0<\eps\leq\eps_0$, if
\ben\label{assm1}
\|(\rho_0-\bar\rho_0)(t)\|_{\dot B^{\f12,\f3p}_{2,p}}+\|\cP(u_0-\bar u_0)(t)\|_{\dot B^{ \f3p-1}_{p,1}}+\|\cQ (u_0-\bar u_0)(t)\|_{\dot B^{\f12,\f3p-1}_{2,p}}\leq \eps,
\een
then $\mbox{(CNS)}$ admits a global  and unique solution $(\rho, u)$ with the initial data $(\rho_0, u_0)$. Moreover, for any $t>0$,  
\beno
&&\|(\rho-\bar\rho)(t)\|_{\dot B^{\f12,\f3p}_{2,p}}+\|\cP(u-\bar u)(t)\|_{\dot B^{ \f3p-1}_{p,1}}+\|\cQ (u-\bar u)(t)\|_{\dot B^{\f12,\f3p-1}_{2,p}} \\
&&\lesssim \min\{(1+\delta|\ln\eps|)^{-\beta(p_0)/2}, (1+t)^{-\beta(p_0)/2}+\epsilon\},  \eeno
where  $\delta$ is a constant independent of $\eps,$ $\beta(p_0)$ is defined in Theorem \ref{thm:main1}, $\cP=I+\na (-\Delta)^{-1}\div,$ and $\cQ=-\na (-\Delta)^{-1}\div$.

\end{thm}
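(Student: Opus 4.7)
I would follow the three-step scheme sketched in the introduction. Set $\delta\rho:=\rho-\bar\rho$ and $\delta u:=u-\bar u$; subtracting the two copies of $\mbox{(CNS)}$ yields a transport equation for $\delta\rho$ driven by $\bar u$ and $\delta u$, coupled with a momentum equation (after dividing by $\rho$) that is parabolic in $\delta u$ with $\bar\rho,\bar u$-dependent coefficients plus source terms which are at worst quadratic in $(\delta\rho,\delta u)$. The uniform bounds~\eqref{assm} on the reference make these coefficients admissible for the $L^p$ critical a priori estimates developed in \cite{Dan-He,Has,CMZ}; in particular one splits into low and high frequencies as in \eqref{eq:decompo} and uses the effective viscous flux to handle the density/velocity acoustic coupling.

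\textbf{Step 1: short-time stability.} Let $E(t)$ denote the norm appearing on the left-hand side of the conclusion. Decomposing $\delta u=\cP\delta u+\cQ\delta u$, applying heat smoothing for $\cP\delta u$ in $\dot B^{\f3p-1}_{p,1}$, damped mixed-frequency estimates for the coupled $(\delta\rho,\cQ\delta u)$ system in $\dot B^{\f12,\f3p-1}_{2,p}$, and controlling the quadratic source via Besov product laws, I expect a Gronwall-type inequality
$$
E(t)\le E(0)+C_1\int_0^t\bigl(E(\tau)+E(\tau)^2\bigr)\,d\tau,
$$
where $C_1$ depends only on the constant $C$ in~\eqref{assm}. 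A continuity argument then yields $E(t)\le 2\eps\,e^{C_1 t}$ as long as the right-hand side stays $\le 1$; choosing $\delta<1/(2C_1)$, this persists up to $T_0:=\delta|\ln\eps|$, at which point $E(T_0)\lesssim \eps^{1-C_1\delta}\ll 1$.

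\textbf{Steps 2--3: convergence of the reference and global continuation.} The hypothesis \eqref{assm} forces $\bar\rho\ge\underline\rho>0$ and $\|\bar\rho\|_{C^\alpha}\le M$ for suitable $\alpha$, so Theorem~\ref{thm:main1} applies to $(\bar\rho,\bar u)$ and \eqref{decayest1}--\eqref{decayest4} yield
$$
\|(\bar\rho(t)-1,\bar u(t))\|_{\dot B^{\f12,\f3p}_{2,p}}\lesssim (1+t)^{-\beta(p_0)/2}.
$$
At $t=T_0$ the triangle inequality combined with Step~1 places $(\rho(T_0),u(T_0))$ within $(1+\delta|\ln\eps|)^{-\beta(p_0)/2}$ of $(1,0)$ in the target topology. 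For $\eps$ sufficiently small this lies below the smallness threshold of the standard near-equilibrium global well-posedness theorem in $L^p$ critical spaces~\cite{Dan-He}, which then furnishes the global continuation of $(\rho,u)$ on $[T_0,\infty)$ together with decay toward $(1,0)$ at rate $(1+t)^{-\beta(p_0)/2}$. Combining this with the decay of $(\bar\rho,\bar u)$ via the triangle inequality produces the $(1+t)^{-\beta(p_0)/2}+\eps$ bound for $t\ge T_0$ (the $\eps$ term is the portion of the initial gap that is transported but not smoothed out), while for $t\le T_0$ the uniform bound $(1+\delta|\ln\eps|)^{-\beta(p_0)/2}$ is sharper; the $\min$ in the statement records the two regimes.

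\textbf{Main obstacle.} The crux is Step~1: propagating a critical-norm estimate on $(\delta\rho,\delta u)$ up to $T_0\sim|\ln\eps|$, well beyond the standard local horizon, while keeping the Gronwall exponent $C_1$ depending only on the bounds~\eqref{assm}. The momentum equation has $1/\rho$-type coefficients and is transported by $u=\bar u+\delta u$, so one needs composition and product estimates in the mixed Besov class $\dot B^{\f12,\f3p}_{2,p}$, together with careful use of the low/high frequency splitting \eqref{eq:decompo} to close the acoustic coupling uniformly for $p\in[2,4]$; controlling the high-frequency part of $\delta\rho$ in the transport equation by the parabolic smoothing of $\cQ\delta u$ (via the effective viscous flux) is the delicate technical piece.
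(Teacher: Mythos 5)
Your proposal follows essentially the same three-step scheme as the paper: a long-time stability estimate for $(h,v)=(\rho-\bar\rho,\,u-\bar u)$ propagated via a continuity/Gronwall argument up to $T_0\sim\delta|\ln\eps|$ (this is exactly Proposition~\ref{prop:ST}, which the paper proves by bootstrapping under the standing hypothesis that the perturbation stays below $\eps^{1/2}$, so the quadratic source is absorbed and the Gronwall exponent depends only on the constant in \eqref{assm}); the decay of the reference solution furnished by Theorem~\ref{thm:main1}; and global continuation from time $t_0\sim\frac12(1+\delta|\ln\eps|)$ via the small-data $L^p$-critical well-posedness theory, with the final bound coming from the triangle inequality and the two regimes recorded by the $\min$. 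The technical tools you flag (effective viscous flux, low/high frequency splitting, mixed-Besov product and commutator estimates, composition estimates for $1/\rho$) are precisely what the paper deploys in Lemma~\ref{lem:H} and the proof of Proposition~\ref{prop:ST}.
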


\begin{rmk}
The assumption \eqref{assm} on the reference solution is easily satisfied if the solution is in $H^s$ with $s\geq 4$. 
\end{rmk}

\medbreak

Finally we want to construct some   solutions to the system $\mbox{(CNS)}$  which initially are far away from the  equilibrium. 
 We will deal with the initial data $(a_0,u_0)$  verifying that 
\begin{itemize}
\item $(a_0,\cQ u_0)^{\ell } \in\dot B^{\f12}_{2,1},$
\item $(a_0)^{H}\in \dot B^{\f3p}_{p,1},\quad (\cQ u_0)^{H}\in \dot B^{\f3p-1}_{p,1},$
\item $\cP u_0\in \dot B^{\f3p-1}_{p,1} .$
\end{itemize}

  \begin{thm}\label{thm:main3}
Let $2\leq p\leq 4$.   If  $(a_0 , u_0 )$ satisfies that
\begin{equation}\label{eq:smalldata1}
\big(\|(a_0,\cQ u_0)\|^{\ell}_{\dot B^{\f12}_{2,1}}+\|\cQ u_0 \|^{H}_{\dot B^{\f3p-1}_{p,1}}+\|a_0 \|^{H}_{\dot B^{\f3p}_{p,1}}+\|(\cP u_0)^h\|_{\dot B^{\f3p-1}_{p,1}}\big)\exp \big(C(1+\|(\cP u_0)^3\|_{\dot B^{\f3p-1}_{p,1}} )\big)\leq \eps,
\end{equation}
where $C$ is a universal constant,
then the system $\mbox{(CNS)}$ with initial data $(a_0 , u_0 )$ admits a unique and global
solution $(a,u)$ satisfying that for all $t>0,$
\beno
&&\|a(t)\|_{\dot B^{\f12, \f3p}_{2,p}}+ \|  \cQ u(t)\|_{\dot B^{\f12, \f3p-1}_{2,p}}+ \|(\cP u)^h(t)\|_{\dot B^{  \f3p-1}_{p,1}}\\
&&\quad+\int^{\infty}_0\bigg(\|a\|_{\dot B^{\f52, \f3p}_{2,p}}+ \|  \cQ u\|_{\dot B^{\f52, \f3p+1}_{2,p}}+ \|(\cP u)^h \|_{\dot B^{ \f3p+1}_{p,1}}\bigg)dt \leq C \bigg(1+\|(\cP u_0)^3\|_{\dot B^{ \f3p-1}_{p,1}}\bigg)\eps,
\eeno
and
\beno
\|(\cP u)^3(t) \|_{\dot B^{ \f3p-1}_{p,1}}+\int_0^{\infty}\|(\cP u)^3 \|_{\dot B^{ \f3p+1}_{p,1}}dt\leq 2\|(\cP u_0)^3 \|_{\dot B^{ \f3p-1}_{p,1}}+\eps.
\eeno
  \end{thm}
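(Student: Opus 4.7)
The plan is to import the Paicu--Zhang technique from incompressible Navier--Stokes into the compressible setting $\mbox{(CNS)}$, based on the observation emphasized in the introduction: after Helmholtz decomposition $u=\cP u+\cQ u$, the equation governing $(\cP u)^3$ is ``linear in itself''. Indeed, the only potentially self-quadratic term $(\cP u)^3\pa_3(\cP u)^3$ is turned, via $\div\cP u=0$, into $-(\cP u)^3\bigl(\pa_1(\cP u)^1+\pa_2(\cP u)^2\bigr)$, hence carries a small factor from $(\cP u)^h$. Therefore $(\cP u)^3$ may be arbitrarily large while the remaining unknowns $(a,\cQ u,(\cP u)^h)$, with $a=\rho-1$, stay small.

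\textbf{Reformulation and bootstrap.} Rewrite $\mbox{(CNS)}$ in the unknowns $(a,\cQ u,\cP u)$: applying $\cP$ and $\cQ$ to the momentum equation separates a heat equation for $\cP u$ with viscosity $\mu$ from a parabolic-hyperbolic system for $(a,\cQ u)$ coupled by the linearized pressure, with effective viscosity $\lambda+2\mu$. To recover parabolic smoothing on the low frequencies of $a$, I introduce the effective unknown $w=\cQ u+\bigl((-\Delta)^{-1}\na a\bigr)/(\lambda+2\mu)$ and use the low/high splitting \eqref{eq:decompo} to reconcile $L^2$-based low-frequency norms with $L^p$-based high-frequency ones. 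Let $X(T)$ collect all the $\wt L^\infty_T$- and $L^1_T$-norms of $(a,\cQ u,(\cP u)^h)$ appearing in the statement of the theorem, and set $Y(T):=\|(\cP u)^3\|_{\wt L^\infty_T(\dot B^{\f3p-1}_{p,1})}+\|(\cP u)^3\|_{L^1_T(\dot B^{\f3p+1}_{p,1})}$. I run a continuity argument assuming, on a maximal interval $[0,T^\star)$, that $X(T)\le C\eps\,(1+\|(\cP u_0)^3\|_{\dot B^{\f3p-1}_{p,1}})$ and $Y(T)\le 2\|(\cP u_0)^3\|_{\dot B^{\f3p-1}_{p,1}}+\eps$.

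\textbf{Key estimates.} For the small part, standard critical $L^p$-type estimates combined with paraproduct calculus yield
$$X(T)\le C\eps+CX(T)\bigl(X(T)+Y(T)\bigr).$$
For $(\cP u)^3$, the equation reads $\pa_t(\cP u)^3-\mu\Delta(\cP u)^3=\bigl(\cP\Phi\bigr)^3$ with $\Phi:=-u\cdot\na u+(\rho^{-1}-1)Lu-(\rho^{-1}-1)\na P$. Thanks to the algebraic cancellation above, together with the decomposition $u=\cP u+\cQ u$, every bilinear contribution to $(\cP\Phi)^3$ carries at least one factor from the ``small'' set $\{a,\cQ u,(\cP u)^h\}$. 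The heat estimate \eqref{eq:heat} applied in $\dot B^{\f3p-1}_{p,1}$ therefore gives
$$Y(T)\le C\|(\cP u_0)^3\|_{\dot B^{\f3p-1}_{p,1}}+CX(T)\bigl(1+Y(T)\bigr).$$
Feeding the $Y$-bound back into the $X$-inequality and applying Gronwall's lemma produces the amplification factor $\exp\bigl(C(1+\|(\cP u_0)^3\|_{\dot B^{\f3p-1}_{p,1}})\bigr)$, which is absorbed precisely by the exponential on the left-hand side of the smallness assumption \eqref{eq:smalldata1}. The bootstrap then closes; positivity of $\rho$ throughout is automatic from $\|a\|_{L^\infty}\lesssim\|a\|_{\dot B^{\f3p}_{p,1}}\le X(T)\ll 1$.

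\textbf{Main obstacle.} The delicate point is to keep the right-hand side of the $Y(T)$-bound \emph{linear} in $Y$. Every paraproduct that could couple two copies of $(\cP u)^3$---in particular those hidden inside the $\rho$-dependent corrections $(\rho^{-1}-1)Lu$ and $(\rho^{-1}-1)\na P$ after commuting $L$ and $\na$ with $\cP$---must be rewritten, using only $\div\cP u=0$ and composition estimates for $1/\rho-1\in\wt L^\infty_T(\dot B^{\f3p}_{p,1})$, as a bilinear form involving at least one small factor. Once this algebra is implemented, local well-posedness and uniqueness in the hybrid critical class are provided by the standard theory (Danchin--He, Haspot), and extending the local solution to a global one reduces to the a priori bound obtained via the bootstrap.
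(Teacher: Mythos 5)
Your overall strategy matches the paper's: after Helmholtz decomposition, use $\div\cP u=0$ to turn $(\cP u)^3\pa_3(\cP u)^3$ into $-(\cP u)^3\div_h(\cP u)^h$ so that every bilinear term in the $(\cP u)^3$ equation (and every commutator $[\cQ,(\cP u)\cdot\na](\cP u)^3$ appearing in $\cQ W_\cQ$) carries a small factor, then run a continuity argument and let Gronwall amplify by $\exp\bigl(C(1+\|(\cP u_0)^3\|_{\dot B^{3/p-1}_{p,1}})\bigr)$. Your auxiliary unknown $w=\cQ u+(-\Delta)^{-1}\na a/(\lambda+2\mu)$ is a Haspot-style effective velocity, whereas the paper works with the scalars $d=\Lambda^{-1}\div\cQ u$ and $w=(2\mu+\lambda)\Lambda a-d$ and an $(a,d,w)$-energy in each dyadic block; both serve the same purpose of recovering low-frequency damping on $a$.

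There is, however, a genuine gap in how you set up the final bound. The displayed inequality $X(T)\le C\eps+CX(T)\bigl(X(T)+Y(T)\bigr)$ is a pure algebraic bound in the sup-in-time norms. It cannot be closed by absorption, because the coefficient $CY(T)$ of $X(T)$ on the right is of order $\|(\cP u_0)^3\|_{\dot B^{3/p-1}_{p,1}}$, which is precisely what the theorem allows to be large; and one cannot ``apply Gronwall'' to an inequality in which $T$ appears only through sup-norms. What is actually needed (and what the paper proves, see \eqref{adu}--\eqref{localest1}) is a pointwise-in-$t$ inequality of the form
\begin{equation*}
\cX(t)+\int_0^t \cX_{\rm dissip}(\tau)\,d\tau \le C\eps + C\int_0^t\cX_{\rm dissip}(\tau)\,\cX(\tau)\,d\tau + C\int_0^t\|(\cP u)^3(\tau)\|_{\dot B^{3/p+1}_{p,1}}\,\cX(\tau)\,d\tau,
\end{equation*}
where the quadratic $\cX$-terms are absorbed into the left side using the bootstrap smallness $\cX\le c_1\ll1$, while the $(\cP u)^3$-contribution remains a Gronwall kernel lying in $L^1_t$. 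That $L^1_t$-integrability of $\|(\cP u)^3\|_{\dot B^{3/p+1}_{p,1}}$ is then established a posteriori from the $(\cP u)^3$-equation together with the already-absorbed $\cX$-estimate (the paper's \eqref{intepu3}), and only at that point does the exponential factor materialize. Without this time-localized, convolution-type structure your bootstrap does not close for large $\|(\cP u_0)^3\|_{\dot B^{3/p-1}_{p,1}}$, which is exactly the regime the theorem addresses.
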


\subsection{Organization of the paper}  We first give a rigorous proof to Theorem  \ref{thm:main1}, the global dynamics of $\mbox{(CNS)}$ in Section 2. Then we will prove the global-in-time stability for $\mbox{(CNS)}$, in Section 3. In the next section, Section 4, we will construct global large solutions with a class of initial data in $L^p$ critical spaces. Last, we list some basic knowledge on Littlewood-Paley decomposition in the appendix.

\setcounter{equation}{0}
\section{Global dynamics of the compressible Navier-Stokes equations}
In this section, we will give the proof to Theorem \ref{thm:main1}. To do that, we separate the proof into two steps: 
getting   uniform-in-time bounds  and  the dissipation inequality first and then applying the time-frequency splitting method to obtain the convergence to the equilibrium with quantitative estimates.

\subsection{Uniform-in-time bounds and the dissipation inequality}   In what follows, we will set $a\eqdefa\rho-1$ and
$\mathfrak{a}\eqdefa\rho^\gamma-1.$  Observe that $\mathfrak{a}=\big(\int_0^1 \gamma (\theta\rho+(1-\theta))^{\gamma-1}d\theta\big) a$. Thus if $\rho\le\overline{\rho} $, \ben\label{afraka}|\mathfrak{a}|\sim |a|. \een 

We begin with two lemmas. We first recall the basic energy identity for $\mbox{(CNS)}$:
\begin{lem}\label{energylem1}
	Let $(\rho, u)$ be a global and smooth solution of $\mbox{(CNS)}$. Then the following equality holds
	\begin{equation}\label{energyest2}
	\frac{d}{dt}\Big( \int H(\rho|1)\,dx+\f12\int \rho u^2\,dx\Big)+\mu \|\na u\|_{L^2}^2+(\lam+\mu)\|\div u\|_{L^2}^2
	=0,
	\end{equation}
	where
	\begin{equation*} 
		H(\rho|1)=
 \left\{\begin{array}{l}
\displaystyle \f{1}{\gamma-1}(\rho^\gamma-1-\gamma(\rho-1)),\qquad \textrm{when}\quad \gamma>1,\\
\displaystyle \rho\ln\rho-\rho+1 , \qquad \textrm{when}\quad \gamma=1.
\end{array}\right.
\end{equation*}
\end{lem}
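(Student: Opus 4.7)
The plan is to test the momentum equation against $u$ in $L^2(\R^3)$ and identify each of the four resulting integrals with either the time derivative of an energy functional or a nonnegative dissipation. Since $(\rho,u)$ is smooth with $\rho\to 1$ and $u\to 0$ at infinity, every integration by parts below is licit and all boundary contributions vanish.

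For the convective part I would first invoke the continuity equation to rewrite
\begin{equation*}
\pa_t(\rho u)+\div(\rho u\otimes u)=\rho\bigl(\pa_t u+u\cdot\na u\bigr),
\end{equation*}
then dot with $u$ and use the continuity equation a second time to absorb $\pa_t\rho$ inside $\pa_t(\f12\rho|u|^2)$; these terms then collapse to $\f{d}{dt}\!\int\f12\rho|u|^2\,dx$. For the viscous block, the constancy of $\mu$ and $\lam$ gives the pointwise identity $-\div(2\mu D(u)+\lam\div u\,\Id)=-\mu\Delta u-(\lam+\mu)\na\div u$, and one integration by parts yields exactly $\mu\|\na u\|_{L^2}^2+(\lam+\mu)\|\div u\|_{L^2}^2$.

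The only delicate contribution is the pressure term $\int\na P\cdot u\,dx=-\int\rho^\gamma\div u\,dx$, which I would match with the time derivative of the relative entropy $H(\rho|1)$. A short algebraic check gives $\rho H'(\rho)-H(\rho)=\rho^\gamma-1$ in both cases $\gamma>1$ and $\gamma=1$. Multiplying the continuity equation by $H'(\rho)$ and integrating then produces
\begin{equation*}
\f{d}{dt}\!\int H(\rho|1)\,dx=\int\bigl(H(\rho)-\rho H'(\rho)\bigr)\div u\,dx=-\int(\rho^\gamma-1)\div u\,dx,
\end{equation*}
which equals $-\int\rho^\gamma\div u\,dx$ because $\int\div u\,dx=0$ under our decay assumptions. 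Summing the four contributions gives \eqref{energyest2}.

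The computation is essentially classical; the only point requiring care, and in some sense the main (mild) obstacle in the whole-space setting, is the algebraic normalization: one must subtract off the constant $1$ from $\rho^\gamma$ so that $H(\rho|1)$ is itself integrable on $\R^3$, and the resulting mismatch $\int 1\cdot\div u\,dx$ must be recognized as zero in order to recover the full pressure flux.
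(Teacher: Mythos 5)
Your proof is correct, and it is the standard derivation: test the momentum equation against $u$, use the continuity equation twice to collapse the inertial terms into $\frac{d}{dt}\int\frac12\rho|u|^2\,dx$, integrate the viscous part by parts, and match the pressure term with $\frac{d}{dt}\int H(\rho|1)\,dx$ via the identity $\rho H'(\rho)-H(\rho)=\rho^\gamma-1$. The paper simply ``recalls'' Lemma~\ref{energylem1} as classical and gives no proof, so there is nothing to compare against; your algebraic check of $\rho H'(\rho)-H(\rho)=\rho^\gamma-1$ in both cases $\gamma>1$ and $\gamma=1$ is exactly the short verification one would want to see, and your remark about subtracting the constant so that $H(\rho|1)$ is integrable on $\R^3$ is the right observation. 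One tiny streamlining: rather than splitting $\int\rho^\gamma\div u\,dx$ into $\int(\rho^\gamma-1)\div u\,dx$ plus $\int\div u\,dx$ and arguing the latter vanishes (which requires $\div u\in L^1$), it is cleaner to write $\nabla\rho^\gamma=\nabla(\rho^\gamma-1)$ from the outset and integrate by parts directly against $\rho^\gamma-1$, which decays; this avoids any appeal to $\int\div u\,dx=0$.
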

\begin{rmk}\label{frakaequa} 
	By Taylor expansion, it is not difficult to check that
$H(\rho|1)\geq C(\overline{\rho})(\rho-1)^2$  if $\rho\leq \overline{\rho}$.
\end{rmk}

\begin{lem}\label{energylem2}
	Let $\mu>\f12 \lam,$ and $(\rho, u)$ be a global solution of $\mbox{(CNS)}$ with $0\leq \rho\leq \bar{\rho}.$ Then the following inequality holds
	\begin{equation}\label{energyest3}
	\frac{d}{dt}\int \rho u^4\,dx+ \int |u|^2|\na u|^2\,dx
	\leq  C\|\na u\|_{L^2}^2( \| \mathfrak{a}\|_{L^6}^2 +\|\na u\|_{L^2}^2),
	\end{equation}
	where $C$ is a positive constants depending on $\mu$ and $\lam$. 
\end{lem}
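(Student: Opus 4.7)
The plan is to test the momentum equation of $\mbox{(CNS)}$ against the vector field $|u|^2 u$ and integrate in space; the smoothness of $(\rho,u)$ justifies every integration by parts. The transport pieces $\pa_t(\rho u)\cdot |u|^2 u$ and $\div(\rho u\otimes u)\cdot |u|^2 u$ recombine, via the continuity equation $\pa_t\rho+\div(\rho u)=0$, into $\f14\f{d}{dt}\int \rho|u|^4\,dx$ with no remainder, so the real work is on the viscous and pressure contributions.

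For the viscous term I would integrate by parts to rewrite
\beno
\int\bigl(-\mu\Delta u-(\lam+\mu)\na\div u\bigr)\cdot u|u|^2\,dx&=&\mu\int |\na u|^2|u|^2\,dx+\f{\mu}{2}\int|\na|u|^2|^2\,dx\\
&&+(\lam+\mu)\int(\div u)^2|u|^2\,dx+(\lam+\mu)\int\div u\,(u\cdot\na|u|^2)\,dx.
\eeno
The first three summands are nonnegative under $\mu>0$ and $\lam+\mu>0$; the last is indefinite and is the crux of the argument. To absorb it, I would use the identity $u\cdot\na|u|^2=2\,u^{\top}D(u)\,u$, which yields the pointwise bound $|u\cdot\na|u|^2|\le 2|u|^2|\na u|$, and then apply Young's inequality with a carefully optimized weight that splits the cross term between the $(\div u)^2|u|^2$ piece and the $|\na u|^2|u|^2$ piece. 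The hypothesis $\mu>\f12\lam$ is exactly what makes the two resulting fractions stay strictly below $1$, leaving a coercive residual $c(\mu,\lam)\int|u|^2|\na u|^2\,dx$ on the left-hand side.

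For the pressure, writing $\na P=\na \mathfrak{a}$ and integrating by parts gives $-\int\mathfrak{a}(\div u)|u|^2\,dx-\int\mathfrak{a}\,u\cdot\na|u|^2\,dx$. H\"older in the triplet $(L^6,L^2,L^3)$, the Sobolev embedding $\|u\|_{L^6}\lesssim\|\na u\|_{L^2}$, and the pointwise bound $|u\cdot\na|u|^2|\le 2|u|^2|\na u|$ reduce both pieces to $C\|\mathfrak{a}\|_{L^6}\|\na u\|_{L^2}\|u\|_{L^6}^2\le C\|\mathfrak{a}\|_{L^6}\|\na u\|_{L^2}^3$. A final Young inequality $\|\mathfrak{a}\|_{L^6}\|\na u\|_{L^2}^3\le \|\na u\|_{L^2}^2(\|\mathfrak{a}\|_{L^6}^2+\|\na u\|_{L^2}^2)$ then produces exactly \eqref{energyest3}. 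The main obstacle is the viscous coercivity: a naive Cauchy-Schwarz will not close because the cross term is of the same order as the good dissipation, and the sharpness of the Young weight (which is where $\mu>\f12\lam$ enters) is essential.
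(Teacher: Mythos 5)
Your integration by parts and your treatment of the pressure term reproduce the paper's computation, and you correctly identify the cross term $(\lam+\mu)\int\div u\,(u\cdot\na|u|^2)\,dx$ as the crux. The gap is in the pointwise bound you use to absorb it. With $|u\cdot\na|u|^2|\le 2|u|^2|\na u|$ and a Young weight $\gamma>0$, the cross term is bounded by $\gamma(\lam+\mu)(\div u)^2|u|^2+\gamma^{-1}(\lam+\mu)|\na u|^2|u|^2$; to leave a positive residual you would need both $\gamma\le 1$ (to absorb into $(\lam+\mu)(\div u)^2|u|^2$) and $(\lam+\mu)/\gamma<\mu$ (to absorb into $\mu|\na u|^2|u|^2$), i.e.\ $\gamma>1+\lam/\mu$. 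These are incompatible for every $\lam\ge 0$, regardless of whether $\mu>\f12\lam$. The third viscous term $\f{\mu}{2}|\na|u|^2|^2=2\mu|u|^2\bigl|\na|u|\bigr|^2$ cannot rescue this: $\bigl|\na|u|\bigr|\le|\na u|$ goes the wrong way, so it provides no extra $|\na u|^2|u|^2$-dissipation.

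The ingredient you are missing is the sharper pointwise estimate
$\bigl|u\cdot\na|u|^2\bigr|=2|u|\,\bigl|u\cdot\na|u|\bigr|\le 2|u|^2\bigl|\na|u|\bigr|$,
which matches the cross term to the $2\mu|u|^2\bigl|\na|u|\bigr|^2$ dissipation rather than to $\mu|u|^2|\na u|^2$. Completing the square in $(|\div u|,|\na|u||)$ gives
\begin{equation*}
(\lam+\mu)(\div u)^2-2(\lam+\mu)|\div u|\bigl|\na|u|\bigr|+2\mu\bigl|\na|u|\bigr|^2
=(\lam+\mu)\Bigl(|\div u|-\bigl|\na|u|\bigr|\Bigr)^2+(\mu-\lam)\bigl|\na|u|\bigr|^2,
\end{equation*}
and if $\mu<\lam$ the deficit $(\mu-\lam)\bigl|\na|u|\bigr|^2$ is covered by borrowing a fraction $(1-\theta)\mu|\na u|^2\ge(1-\theta)\mu\bigl|\na|u|\bigr|^2$ of the Laplacian dissipation; the requirement $(1-\theta)\mu+(\mu-\lam)\ge0$ admits some $\theta\in(0,1)$ precisely because $2\mu>\lam$, and the leftover $\theta\mu|u|^2|\na u|^2$ gives the coercive term. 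This is exactly what the paper does; your version, as written, does not close.
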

\begin{proof}
	Multiplying $4|u|^2 u$ to the second equation of $\mbox{(CNS)}$, and then integrating on $\R^3,$ we can obtain that
	\beno
	\begin{split}
		\frac{d}{dt}&\int \rho u^4\,dx + \int \Big[4|u|^2\Big( \mu|\na u|^2+(\lam+\mu)(\div u)^2+2\mu\big|\na|u|\big|^2  \Big)+4(\lam+\mu)(\na |u|^2)\cdot u\div u    \Big]\,dx\\
		&=4\int \div (|u|^2 u) \mathfrak{a}\,dx\leq C\int \mathfrak{a} |u|^2|\na u|\,dx
		\leq C\|\mathfrak{a}\|_{L^6}\|u^2\|_{L^3}\|\na u\|_{L^2}\leq C\|  \mathfrak{a}\|_{L^6}^2\|\na u\|_{L^2}^2+\|\na u\|_{L^2}^4.
	\end{split}
	\eeno
	Using the inequality $\big|\na|u|\big|\leq |\na u|$, we have
	\beno
	\begin{split}
		&4|u|^2\Big( \mu|\na u|^2+(\lam+\mu)(\div u)^2+2\mu\big|\na|u|\big|^2  \Big)+4(\lam+\mu)(\na |u|^2)\cdot u\div u\\
		\geq& 4|u|^2\Big[ \mu|\na u|^2+(\lam+\mu)(\div u)^2+2\mu\big|\na|u|\big|^2  -2(\lam+\mu)\big|\na|u|\big| |\div u|\Big]\\
		=& 4|u|^2\Big[ \mu|\na u|^2+(\lam+\mu)\Big( \div u- \big|\na|u|\big|\Big)^2\Big]+4|u|^2(\mu-\lam)\big|\na|u|\big|^2\\
		\geq&4(2\mu-\lam)|u|^2\big|\na |u|\big|^2\geq C|u|^2|\na u|^2,
	\end{split}
	\eeno
	where in the last step we use $\mu>\f12\lam.$ Combining these two estimates, we arrive at  \eqref{energyest3}.
\end{proof}

\subsubsection{The first attempt for uniform bounds and the dissipation inequality} 
We want to prove
\begin{prop}\label{energyprop}
Let $\mu>\frac12 \lam$ and $(\rho,u)$ be a smooth global solution of $\mbox{(CNS)}$ with $0\leq\rho\leq \bar\rho.$  Then $u\in L^\infty((0,+\infty);L^4\cap H^1 )\cap L^2((0,+\infty); \dot{H}^1\cap \dot{H}^2),$ $\mathfrak{a} \in L^\infty((0,+\infty);H^1)\cap L^2((0,+\infty); L^6),$ and $u\cdot\na u\in L^2((0,+\infty); L^2).$ Furthermore, the following inequality holds
\begin{equation}\label{energyest1}
\begin{split}
\f{d}{dt}&\Big[ A_1\|\rho^{\f14} u\|_{L^4}^4+A_2\Big( \mu\|\na u\|_{L^2}^2+(\lam+\mu)\|\div u\|_{L^2}^2-(\mathfrak{a} , \div u)- \int f(\rho)dx\Big)+A_3\|\mathfrak{a} \|_{L^6}^2\\
&+A_4\big(\int H(\rho|1)\,dx+\|\sqrt{\rho} u\|_{L^2}^2\big) \Big]+ A_5\Big(\| |u| |\na u|\|_{L^2}^2+\|\na u\|_{L^2}^2+ \|\sqrt{\rho}u_t\|_{L^2}^2 \\
&\quad +\|\Delta \cP u\|_{L^2}^2+\|\div u-\f1{\lam+2\mu}\mathfrak{a}\|_{\dot{H}^1}^2+\|\mathfrak{a} \|_{L^6}^2+\|\na u\|_{L^6}^2\Big) \leq0,
\end{split}
\end{equation}
where $A_i(i=1,\cdots,5)$ are positive constants depending on $\mu,$ $\lam,$ and $\bar{\rho}$ and $f(\rho)$ is defined in \eqref{deffa} verifying $|f(\rho)|\lesssim H(\rho|1)$.
\end{prop}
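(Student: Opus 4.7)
My plan is to derive \eqref{energyest1} by superposing four energy estimates weighted by positive constants $A_1,A_2,A_3,A_4$ tuned at the end; each estimate will supply one of the energy pieces on the left, and the nonlinear remainders will be absorbed into the combined dissipation on the right. Applying Lemma \ref{energylem1} with weight $A_4$ supplies the $A_4\bigl(\int H(\rho|1)\,dx+\|\sqrt\rho u\|_{L^2}^2\bigr)$ piece of the energy and the $A_4\bigl(\mu\|\nabla u\|_{L^2}^2+(\lambda+\mu)\|\div u\|_{L^2}^2\bigr)$ piece of the dissipation; Lemma \ref{energylem2} with weight $A_1$ then adds $A_1\|\rho^{1/4}u\|_{L^4}^4$ to the energy and $A_1\| |u||\nabla u|\|_{L^2}^2$ to the dissipation, at the cost of a cubic remainder of order $\|\nabla u\|_{L^2}^2\bigl(\|\mathfrak{a}\|_{L^6}^2+\|\nabla u\|_{L^2}^2\bigr)$ that I defer to the closing step.

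The core of the proof will be the $H^1$-type estimate obtained by testing the momentum equation with $u_t$. Integration by parts gives
\[
\|\sqrt{\rho}\,u_t\|_{L^2}^2+\tfrac12\tfrac{d}{dt}\bigl(\mu\|\nabla u\|_{L^2}^2+(\lambda+\mu)\|\div u\|_{L^2}^2\bigr)=\int\mathfrak{a}\,\div u_t\,dx-\int\rho(u\cdot\nabla u)\cdot u_t\,dx.
\]
Writing $\int\mathfrak{a}\,\div u_t\,dx=\tfrac{d}{dt}(\mathfrak{a},\div u)-\int\mathfrak{a}_t\,\div u\,dx$ and using the transport equation $\partial_t\mathfrak{a}+u\cdot\nabla\mathfrak{a}+\gamma\rho^\gamma\div u=0$ will produce a troublesome term $\gamma\int\rho^\gamma(\div u)^2\,dx$; the function $f(\rho)$ in \eqref{deffa} will be chosen precisely so that $\tfrac{d}{dt}\int f(\rho)\,dx$ matches this term modulo remainders absorbable via the continuity equation $\partial_t\rho+\div(\rho u)=0$, and the coercivity bound $|f(\rho)|\lesssim H(\rho|1)$ keeps the associated energy piece positive. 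The convection term is bounded by $\tfrac{A_2}{4}\|\sqrt{\rho}\,u_t\|_{L^2}^2+C\bar\rho\| |u||\nabla u|\|_{L^2}^2$ via Cauchy--Schwarz. Introducing next the effective viscous flux $F:=(\lambda+2\mu)\div u-\mathfrak{a}$, which satisfies $\nabla F=\cQ(\rho u_t+\rho u\cdot\nabla u)$ and $\mu\Delta\cP u=\cP(\rho u_t+\rho u\cdot\nabla u)$, elliptic regularity plus Sobolev $\dot{H}^1\hookrightarrow L^6$ immediately produce the $\|\Delta\cP u\|_{L^2}^2$, $\|\div u-\tfrac{1}{\lambda+2\mu}\mathfrak{a}\|_{\dot{H}^1}^2$ and $\|\nabla u\|_{L^6}^2$ dissipations on the right of \eqref{energyest1} without contributing to the time derivative.

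For the $L^6$ bound on $\mathfrak{a}$ I rewrite the transport equation using $\div u=(F+\mathfrak{a})/(\lambda+2\mu)$ as $\partial_t\mathfrak{a}+u\cdot\nabla\mathfrak{a}+\tfrac{\gamma\rho^\gamma}{\lambda+2\mu}\mathfrak{a}=-\tfrac{\gamma\rho^\gamma}{\lambda+2\mu}F$; testing with $|\mathfrak{a}|^4\mathfrak{a}$ and using $\rho\leq\bar\rho$ yields a damped ODE for $\|\mathfrak{a}\|_{L^6}^6$ whose rescaling by $\|\mathfrak{a}\|_{L^6}^{-4}$ contributes $A_3\|\mathfrak{a}\|_{L^6}^2$ to both the energy and the dissipation, the forcing $\lesssim A_3\|\nabla F\|_{L^2}\|\mathfrak{a}\|_{L^6}$ being absorbed by the flux estimate. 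Summing the four estimates with $A_1\ll A_2\ll A_3\ll A_4$ chosen in terms of $\mu,\lambda,\bar\rho$, the cross term $(\mathfrak{a},\div u)$ in the energy is dominated by $\tfrac{A_4}{4}\int H(\rho|1)\,dx+\tfrac{A_2}{4}(\lambda+\mu)\|\div u\|_{L^2}^2$ via Cauchy--Schwarz and \eqref{afraka}, and the deferred cubic remainders are swallowed by the $\|\mathfrak{a}\|_{L^6}^2,\|\nabla u\|_{L^6}^2$ and $\| |u||\nabla u|\|_{L^2}^2$ dissipations, yielding \eqref{energyest1}.

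I expect the hardest point to be the construction of $f(\rho)$ in \eqref{deffa}: it must generate $\gamma\int\rho^\gamma(\div u)^2\,dx$ as an exact total time derivative (not merely as an absorbable quantity), which is what upgrades the Gronwall-type estimate implicit in the $u_t$ test into the clean ODE-type inequality \eqref{energyest1}; the uniform pointwise upper bound $\rho\leq\bar\rho$ also plays an essential role in rescaling the $\|\mathfrak{a}\|_{L^6}^6$ ODE down to $\|\mathfrak{a}\|_{L^6}^2$ and in turning the elliptic bounds on $F$ and $\cP u$ into clean $L^2$-level dissipations.
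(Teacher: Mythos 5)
Your overall architecture matches the paper's: a weighted superposition of the basic energy identity (Lemma \ref{energylem1}), the $L^4$ estimate on $u$ (Lemma \ref{energylem2}), the $u_t$ test on the momentum equation together with elliptic bounds on $\cP u$ and the effective viscous flux, and the $L^6$ energy estimate on $\mathfrak{a}$. However, you misidentify the role of $f(\rho)$, and this is a genuine gap. You assert that $\tfrac{d}{dt}\int f(\rho)\,dx$ is constructed to produce $\gamma\int\rho^\gamma(\div u)^2\,dx$ as an exact time derivative. This is structurally impossible: by the continuity equation, $\tfrac{d}{dt}\int f(\rho)\,dx=\int\rho f''(\rho)\,u\cdot\nabla\rho\,dx$, which is linear in $u$, whereas $\gamma\int\rho^\gamma(\div u)^2\,dx$ is quadratic in $\div u$. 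In the paper that term is simply dominated by $C\bar\rho^\gamma\|\nabla u\|_{L^2}^2$ and absorbed by the dissipation supplied by Lemma \ref{energylem1}. What your sketch omits is the companion term $\int u\cdot\nabla\mathfrak{a}\,\div u\,dx$, which is produced alongside $\gamma\int\rho^\gamma(\div u)^2\,dx$ when one substitutes $\mathfrak{a}_t=-u\cdot\nabla\mathfrak{a}-\gamma\rho^\gamma\div u$. After integrating by parts and writing $\nabla\div u=\nabla\bigl(\div u-\tfrac{1}{\lambda+2\mu}\mathfrak{a}\bigr)+\tfrac{1}{\lambda+2\mu}\nabla\mathfrak{a}$, this term leaves behind $\tfrac{1}{\lambda+2\mu}\int\mathfrak{a}\,u\cdot\nabla\mathfrak{a}\,dx$, a cubic quantity that is controlled by none of the available dissipations (there is no $\|\nabla\mathfrak{a}\|$ in the dissipation budget at this stage). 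It is precisely this term that $f(\rho)$ turns into an exact derivative: $\partial_t\int f(\rho)\,dx=\tfrac{1}{\lambda+2\mu}\int\mathfrak{a}\,u\cdot\nabla\mathfrak{a}\,dx$. Without this identification the $u_t$ test does not close.

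A second, more subtle issue is your closing step. The remainder from Lemma \ref{energylem2} is $C\|\nabla u\|_{L^2}^2\bigl(\|\mathfrak{a}\|_{L^6}^2+\|\nabla u\|_{L^2}^2\bigr)$, and you propose to ``swallow'' it directly into the dissipations $\|\mathfrak{a}\|_{L^6}^2,\|\nabla u\|_{L^6}^2,\||u||\nabla u|\|_{L^2}^2$. But absorbing, say, $C\|\nabla u\|_{L^2}^2\|\mathfrak{a}\|_{L^6}^2$ into $A_5\|\mathfrak{a}\|_{L^6}^2$ requires $\|\nabla u\|_{L^2}^2$ to be small, which holds neither initially nor a priori. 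The paper instead runs a two-pass argument: first, since $\|\mathfrak{a}\|_{L^6}^2+\|\nabla u\|_{L^2}^2$ is part of the combined energy $Y(t)$ and $\|\nabla u\|_{L^2}^2\in L^1(0,\infty)$ by Lemma \ref{energylem1}, Gronwall yields a uniform bound $Y(t)\le C$; only then does the remainder collapse to $C\|\nabla u\|_{L^2}^2$, which is absorbed by taking $A_4$ large. This bootstrap is what upgrades the Gronwall-type estimate into the clean dissipation inequality \eqref{energyest1}.
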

\begin{rmk} Thanks to the energy identity and the fact that $\int f(\rho)dx\lesssim \int |\rho-1|^2\,dx$, choose $A_4$ large enough and then we can derive that
\beno
&&A_1\|\rho^{\f14} u\|_{L^4}^4+A_2\Big( \mu\|\na u\|_{L^2}^2+(\lam+\mu)\|\div u\|_{L^2}^2-( \mathfrak{a} , \div u)- \int f(\rho)dx\Big)+A_3\|\mathfrak{a} \|_{L^6}^2\\
&&+A_4\big(\int H(\rho|1)\,dx+\|\sqrt{\rho} u\|_{L^2}^2\big)\sim \|\rho^{\f14} u\|_{L^4}^4+\|\na u\|_{L^2}^2+\int H(\rho|1)\,dx+\|\sqrt{\rho} u\|_{L^2}^2+\|  \mathfrak{a} \|_{L^6}^2.
\eeno
\end{rmk}

\begin{proof} To derive the desired results, we split the proof into several steps.
	
	{\it Step 1: Estimate of $\na u$.} First, multiplying the second equation of $\mbox{(CNS)}$ with $u_t$  and taking the inner product, we get that
\begin{equation}\label{enep1}
\begin{split}
\f{d}{dt}(\f12 \mu\|\na u\|_{L^2}^2+\f12(\lam+\mu)\|\div u\|_{L^2}^2)+(\rho u_t,u_t)=-(\na \rho^\gamma,u_t)-(\rho u\cdot \na u,u_t).
\end{split}
\end{equation}

\underline{Estimate of $-(\na \rho^\gamma ,u_t)$.}
Observe that
\beno
-(\na \rho^\gamma,u_t)&=&-\f{d}{dt}(\na \rho^\gamma , u)+(\pa_t(\rho^\gamma),-\div u)=-\f{d}{dt}(\na \rho^\gamma, u)+(\gamma \rho^\gamma\div u+u\cdot \na\rho^\gamma,\div u) \\
 &\leq& -\f{d}{dt}(\na \mathfrak{a} , u)+C\|\rho\|^\gamma_{L^\infty} \|\na u\|_{L^2}^2 +(u\cdot \na \mathfrak{a} ,\div u).
\eeno
Let us focus on the last term $(u\cdot \na \mathfrak{a} ,\div u)$.  In fact, one can check that  
\beno
 \f1{\lam+2\mu}(u \mathfrak{a} ,\na \mathfrak{a} )=\pa_t \int f(\rho)\,dx,\eeno
where \ben\label{deffa}\qquad f(\rho)=
\left\{\begin{array}{l}
	\displaystyle \f1{\lam+2\mu}\big[\f{\gamma^2}{2(2\gamma-1)} (\rho-1)^2-\big(\f{\gamma-1}{2(2\gamma-1)}\rho^\gamma+ \f{\gamma(\gamma-1)}{2(2\gamma-1)}\rho\\\qquad-\f{ \gamma^2+2\gamma-1}{2(2\gamma-1)}\big)H(\rho|1)\big],\quad \textrm{when}\, \gamma>1,\\
	\displaystyle \f1{\lam+2\mu}\big[\f12 (\rho-1)^2-\big(\rho\ln\rho-\rho+1\big)\big] , \qquad \textrm{when}\quad \gamma=1.
\end{array}\right. \een
Thanks to Remark \ref{frakaequa} and $\rho\le \bar{\rho}$, we get that $|f(\rho)|\lesssim H(\rho|1)$. 

 Going back to the estimate of $-(\na \rho^\gamma ,u_t)$, we have
\beno
&&(u\cdot \na \mathfrak{a} ,\div u)=-(\mathfrak{a} \div u ,\div u)-(\mathfrak{a} u ,\na (\div u-\f1{\lam+2\mu}\mathfrak{a} ))-\f1{\lam+2\mu}(u \mathfrak{a} ,\na \mathfrak{a} ) \\&&\le C\|\mathfrak{a} \|_{L^\infty} \|\div u\|_{L^2}^2+  \|\mathfrak{a} \|_{L^\infty}^{\f13} \|\mathfrak{a} \|_{L^2}^{\f23}\|\na u\|_{L^2}  \|\na (\div u-\f1{\lam+2\mu}\mathfrak{a} )\|_{L^2}
 -\pa_t \int f(\rho)\,dx,
\eeno
which yields 
\beno  &&-(\na \mathfrak{a} ,u_t)\leq -\f{d}{dt}(\na \mathfrak{a} , u)-\pa_t \int f(\rho)\,dx+C\|\rho\|^\gamma_{L^\infty} \|\na u\|_{L^2}^2  +C\|\mathfrak{a}\|_{L^\infty} \|\div u\|_{L^2}^2\\&&\qquad\qquad\qquad+ C\|\na u\|_{L^2}  \|\na (\div u-\f1{\lam+2\mu}\mathfrak{a} )\|_{L^2}.\eeno

\underline{Estimate of $(\rho u\cdot\na u,u_t)$.}  We have
\beno
\begin{split}
|(\rho u\cdot\na u,u_t)|\leq \|\rho^{\f12}\|_{L^\infty} \|u\cdot\na u\|_{L^2} \|\rho^{\f12} u_t\|_{L^2}.
\end{split}
\eeno
Plugging these two estimates into \eqref{enep1},  we obtain that
\begin{equation}\label{enep2}
\begin{split}
\f{d}{dt}&\bigg(\f12 \mu\|\na u\|_{L^2}^2+\f12(\lam+\mu)\|\div u\|_{L^2}^2-(\mathfrak{a} ,\div u)- \int  f(\rho)\,dx\bigg)+ \|\rho^{\f12} u_t\|_{L^2}^2\\
&\leq  C\|\na u\|_{L^2}^2+C\eta \|\na (\div u-\f1{\lam+2\mu} \mathfrak{a})\|_{L^2}^2 + C  \|u\cdot\na u\|_{L^2}^2,
\end{split}
\end{equation}
where $\eta$ is a small constant and the constant $C$ depends on the initial data and $\overline{\rho}$.

{\it Step 2: Improving estimate by the elliptic system.} The second equation of $\mbox{(CNS)}$ can  rewritten into
\beno
-\mu\Delta u-(\lam+\mu)\na\div u+\na \mathfrak{a} =-\rho(u_t+u\cdot\na u).
\eeno
Set $b=\cP u=(I+\na(-\Delta)^{-1}\div)u,$ $d=\Lam^{-1}\div u,$ where $\Lam$ is a Fourier multiplier, which satisfies $\Lam^2=-\Delta.$ Then the above equations turns to
\begin{equation}
  \left\{
    \begin{aligned}
      & -\mu\Delta b=\cP\big( \rho (u_t+u\cdot\na u)\big),\\
      &-(\lam+2\mu)\Delta d-\Lam \mathfrak{a} =\Lam^{-1}\div\big( \rho (u_t+u\cdot\na u)\big).
        \end{aligned}
  \right.
  \label{eq:BD}
\end{equation}
By the standard elliptic estimate, we have
\begin{equation}\label{enep3}
\|\mu\Delta b\|_{L^2}^2+\|(\lam+2\mu)\Lam d-\mathfrak{a} \|^2_{\dot{H}^1}\leq (1+\overline{\rho})^2( \|\rho^{\f12} u_t\|_{L^2}^2+ \|u\cdot\na u\|_{L^2}^2).
\end{equation}
Combining \eqref{enep2} and \eqref{enep3}, we can get that
\begin{equation}\label{enep4}
\begin{split}
\f{d}{dt}& \Big( \mu\|\na u\|_{L^2}^2+(\lam+\mu)\|\div u\|_{L^2}^2+(\na \mathfrak{a} ,u)- \int f(\rho)\,dx\Big)\\
&+ \big(\|\rho^{\f12}  u_t\|_{L^2}^2 +\|\mu\Delta b\|_{L^2}^2+\|(\lam+2\mu)\Lam d-\mathfrak{a} \|^2_{\dot{H}^1}  \big) \leq C \|\na u\|_{L^2}^2 +C \|u\cdot\na u\|_{L^2}^2,
\end{split}
\end{equation}
where $C$ is positive constant depending on $\overline{\rho}$ and the initial data.

{\it Step 3: Estimate of $\mathfrak{a} $.}  The first equation of $\mbox{(CNS)}$ can be rewritten by
\begin{equation}\label{enep5}
\f1\gamma(\mathfrak{a} _t+u\cdot\na \mathfrak{a} ) +\f1{\lam+2\mu}\mathfrak{a} +\mathfrak{a} \div u=-(\Lam d-\f1{\lam+2\mu}\mathfrak{a} ).
\end{equation}
Then making the inner product  to the above equation with $  |\mathfrak{a} |^4 \mathfrak{a} $, we obtain that
\beno
\f16\f{d}{dt}\|\mathfrak{a} \|_{L^6}^6+\f1{\lam+2\mu}\|\mathfrak{a} \|_{L^6}^6 +\f56\int \div u |\mathfrak{a} |^6\,dx\leq \gamma\|(\Lam d-\f1{\lam+2\mu}\mathfrak{a} )\|_{L^6}\|\mathfrak{a} ^5\|_{L^{\f65}}
\eeno
which implies 
\beno
\f16\f{d}{dt}\|\mathfrak{a} \|_{L^6}^6 + \f1{\lam+2\mu} \int (1+\f56\mathfrak{a} )\mathfrak{a} ^6\,dx& \leq & C \|(\Lam d-\f1{\lam+2\mu}\mathfrak{a} )\|_{L^6}\|\mathfrak{a} ^5\|_{L^{\f65}}\\
& \leq & C \|(\Lam d-\f1{\lam+2\mu}\mathfrak{a} )\|_{L^6}\|\mathfrak{a} \|^5_{L^{6}}.
\eeno
Dividing the above estimate by $\|\mathfrak{a} \|_{L^6}^4,$ and recalling $1+\f56\mathfrak{a}\geq \f16$,   we get that
\ben\label{est:a}
\f{d}{dt}\|\mathfrak{a} \|_{L^6}^2 +  \| \mathfrak{a} \|_{L^6}^2     \leq  C \|\na (\Lam d-\f1{\lam+2\mu}\mathfrak{a} )\|^2_{L^2}  .
\een

{\it Step 4: Closing the energy estimates.}  Combining \eqref{energyest2}, \eqref{energyest3}, \eqref{enep4} and \eqref{est:a}, and choosing $\eta$ small enough, we can get
\begin{equation}\label{enep8}
\begin{split}
\f{d}{dt}&\Big[ A_1\|\rho^{\f14} u\|_{L^4}^4+A_2\Big( \mu\|\na u\|_{L^2}^2+(\lam+\mu)\|\div u\|_{L^2}^2-( \mathfrak{a} , \div u)- \int f(\rho)dx\Big)+A_3\|  \mathfrak{a} \|_{L^6}^2\\
&+A_4\big(\int H(\rho|1)\,dx+\|\sqrt{\rho} u\|_{L^2}^2\big) \Big]+ A_5\Big(\| |u| |\na u|\|_{L^2}^2+\|\na u\|_{L^2}^2 + \|\sqrt{\rho}u_t\|_{L^2}^2\\
&\quad +\|\Delta b\|_{L^2}^2+\|\Lam d-\f1{\lam+2\mu}\mathfrak{a} \|_{\dot{H}^1}^2+\| \mathfrak{a} \|_{L^6}^2\Big) \leq A_6( \| \mathfrak{a} \|_{L^6}^2 +\|\na u\|_{L^2}^2) \|\na u\|_{L^2}^2,
\end{split}
\end{equation}
where $A_i(i=1,\cdots,6)$ are positive constants depending on $\lam,$ $\mu$ and $\overline{\rho},$ and which ensure that the term $A_2(  \mathfrak{a} , \div u)$ can be controlled by $A_2(\lam+\mu)\|  \div u\|_{L^2}^2$ and $A_4\int H(\rho|1)\,dx.$ By Gronwall's inequality, the above estimate ensures that $u\in L^\infty((0,+\infty);L^4\cap H^1)\cap L^2((0,+\infty); \dot{H}^1)$, $u_t\in L^2((0,+\infty); L^2),$ $\mathfrak{a} \in L^\infty((0,+\infty); L^2)\cap L^2((0,+\infty); L^6),$ $u\cdot\na u\in L^2((0,+\infty); L^2)$. 

Using these estimates, we can improve the estimate \eqref{enep8}.  Notice that the term in the righthand side of  \eqref{enep8} can be bounded by $C\|\na u\|_{L^2}^2$. Then thanks to the energy identity \eqref{energyest2},  the dissipation inequality in the proposition is followed by the fact that for $i\ge1$ and $p=2,6$,
\ben\label{controlofu1} \|\na^i u\|_{L^p}&\le& \|\na^i \cP u\|_{L^p}+ \|\na^i \cQ   u\|_{L^p}\le \|\na^i \cP u\|_{L^p}+ \|\na^{i-1}\div u\|_{L^p}\\&  \le &\|\na^i \cP u\|_{L^p}+ \|\na^{i-1}(\div u-\f1{\lam+2\mu}\mathfrak{a} )\|_{L^p}+\f1{\lam+2\mu}\|\na^{i-1}\mathfrak{a} \|_{L^p}\notag,\\
\|\na^i u\|_{L^6}&\le& \|\na^{i+1} \cP u\|_{L^2}+ \|\na^{i}(\div u-\f1{\lam+2\mu}\mathfrak{a} )\|_{L^2}+\f1{\lam+2\mu}\|\mathfrak{a} \|_{L^6}.
\label{controlofu2}\een
\end{proof}

\subsubsection{Improving regularity estimate for $u$} In order to get the dissipation estimate for $a$, we first improve the regularity estimates for $u$ in this subsection.  We still assume that $(\rho,u)$ is a global and smooth solution of $\mbox{(CNS)}$. We set up some notations. For a function or vector field(or even a $3\times 3$ matrix) $f(t,x)$, the material derivative $\dot{f}$ is defined by
\beno
\dot{f}= f_t+u\cdot\nabla f,
\eeno
and $\div(f\otimes u)= \sum_{j=1}^3\partial_j(fu_j)$.
For two matrices $A=(a_{ij})_{3\times 3}$ and $B=(b_{ij})_{3\times 3}$, we use the notation $A:B=\sum_{i,j=1}^3a_{ij}b_{ij}$ and $AB$ is as usual the multiplication of matrix.

\begin{prop}\label{Prop41}
Let $\mu>\frac12 \lam$ and $(\rho,u)$ be a global and smooth solution of $\mbox{(CNS)}$  satisfying $0\leq \rho\leq \overline{\rho}$ and the admissible condition \eqref{admissible}. Then  there exist constants $A_i$(i=1,\dots,6) such that
\begin{equation}\label{energyest4}
\begin{split}
&\frac{d}{dt}\Big[ A_1\|\rho^{\f14} u\|_{L^4}^4+A_2\Big( \mu\|\na u\|_{L^2}^2+(\lam+\mu)\|\div u\|_{L^2}^2-( \mathfrak{a} , \div u)+\int_{\R^6}f(\rho)dx\Big)+A_3\|  \mathfrak{a} \|_{L^6}^2\\
&+A_4\big(\int H(\rho|1)\,dx+\|\sqrt{\rho} u\|_{L^2}^2\big)+A_5\|\sqrt{\rho}\dot{u}\|_{L^2}^2
\Big]+A_6\Big( \| |u| |\na u|\|_{L^2}^2+\|\na u\|_{L^2}^2 + \|\sqrt{\rho}u_t\|_{L^2}^2\\
& +\|\Delta \cP u\|_{L^2}^2+\|\div u-\f1{\lam+2\mu}\mathfrak{a} \|_{\dot{H}^1}^2+\| \mathfrak{a} \|_{L^6}^2+\|\na u\|_{L^6}^2+\|\na \dot{u}\|_{L^2}^2+\|\div u-\f{1}{2\mu+\lambda}\mathfrak{a} \|_{W^{1,6}}^2\\&+\|\na \cP u\|_{W^{1,6} }^2  \Big)\leq 0,
\end{split}
\end{equation}
where 
\beno  
&&A_1\|\rho^{\f14} u\|_{L^4}^4+A_2\Big( \mu\|\na u\|_{L^2}^2+(\lam+\mu)\|\div u\|_{L^2}^2-(\mathfrak{a} , \div u)+\int_{\R^6}f(\rho)dx\Big)+A_3\|  \mathfrak{a} \|_{L^6}^2\\
&&\quad+A_4\big(\int H(\rho|1)\,dx+\|\sqrt{\rho} u\|_{L^2}^2\big)+A_5\|\sqrt{\rho}\dot{u}\|_{L^2}^2\\
&\sim& \|\rho^{\f14} u\|_{L^4}^4+\|\na u\|_{L^2}^2+\int H(\rho|1)\,dx+\|\sqrt{\rho} u\|_{L^2}^2+\|\mathfrak{a} \|_{L^6}^2+\|\sqrt{\rho}\dot{u}\|_{L^2}^2.
\eeno
\end{prop}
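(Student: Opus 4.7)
The new quantity to propagate is $\|\sqrt\rho\,\dot u\|_{L^2}^2$, where $\dot u\eqdefa u_t+u\cdot\na u$ is the material derivative. This is the standard device from Hoff and the blow-up literature \cite{Huang,Huang1,SWZ1,SWZ2} which upgrades the regularity of $u$ without touching that of $\rho$. I would apply $\pa_t+\div(\cdot\otimes u)$ to the momentum equation $\rho\dot u=-Lu-\na\mathfrak a$, use the continuity equation to convert the time derivative of $\rho$ into a divergence, and test the result against $\dot u$. The principal part of the resulting identity is
\beno
\f12\f{d}{dt}\int\rho|\dot u|^2\,dx+\mu\|\na\dot u\|_{L^2}^2+(\la+\mu)\|\div\dot u\|_{L^2}^2=\cR,
\eeno
where $\cR$ collects commutators produced when the convective derivative crosses $L$ and $\na$.

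\textbf{Estimate of $\cR$ and elliptic bootstrap.} Routine integration by parts shows that each term of $\cR$ is a cubic or quartic expression in $\na u$, $\mathfrak a$, $\dot u$ and $\na\dot u$, typically bounded by
\beno
|\cR|\lesssim \|\na u\|_{L^4}^2\|\na\dot u\|_{L^2}+\|\na u\|_{L^2}^2\|\na u\|_{L^6}\|\dot u\|_{L^6}+\|\mathfrak a\|_{L^6}\|\na u\|_{L^3}\|\na\dot u\|_{L^2}+\eta\|\na\dot u\|_{L^2}^2.
\eeno
Rewriting the momentum equation as the elliptic system \eqref{eq:BD} with source $-\rho\dot u$, Calderón--Zygmund estimates give
\beno
\|\Delta\cP u\|_{L^p}+\|\na(\Lam d-\tfrac1{\la+2\mu}\mathfrak a)\|_{L^p}\lesssim \|\rho\|_{L^\infty}\|\dot u\|_{L^p},\qquad p\in\{2,6\},
\eeno
where for $p=6$ the right-hand side is controlled via Sobolev embedding by $\|\na\dot u\|_{L^2}$. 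Combined with the decomposition \eqref{controlofu1}--\eqref{controlofu2}, this yields exactly the $W^{1,6}$ dissipation terms $\|\na\cP u\|_{W^{1,6}}^2$ and $\|\div u-\tfrac{\mathfrak a}{2\mu+\la}\|_{W^{1,6}}^2$ appearing in \eqref{energyest4}, and simultaneously allows the cubic/quartic terms of $\cR$ to be either absorbed by $\eta\|\na\dot u\|_{L^2}^2$ on the left, or dominated after interpolating the $L^3,L^4$ norms of $\na u$ between $L^2$ and $L^6$ by the dissipation already produced in Proposition~\ref{energyprop}.

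\textbf{Closing and initial data.} Multiplying the $\dot u$-identity by a sufficiently large constant $A_5$ and adding it to \eqref{energyest1}, with the other $A_i$ tuned exactly as in Proposition~\ref{energyprop} so that the cross term $A_2(\mathfrak a,\div u)$ is controlled and so that $A_5\|\sqrt\rho\dot u\|_{L^2}^2$ dominates all residual couplings, yields \eqref{energyest4}. The equivalence of the Lyapunov functional with the stated sum of norms is inherited from Proposition~\ref{energyprop} together with the trivial identification of the new piece. The admissible condition \eqref{admissible} is precisely what ensures $\|\sqrt{\rho_0}\,\dot u(0)\|_{L^2}$ is a finite functional of $(\rho_0,u_0)$, so the Lyapunov functional is finite at $t=0$.

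\textbf{Main obstacle.} The delicate point is the commutator $\int\pa_i u^k\,\pa_k u^j\,\pa_i\dot u^j\,dx$ produced when $\pa_t+\div(\cdot\otimes u)$ crosses $\Delta$. A naive $L^2$--$L^2$--$L^2$ split is borderline; the saving comes from $\|\na u\|_{L^4}^2\le\|\na u\|_{L^2}\|\na u\|_{L^6}$ and the $L^6$ bound on $\na u$ coming from the just-derived elliptic estimate, which itself depends on $\|\na\dot u\|_{L^2}$—so the argument is genuinely self-improving and must be closed by careful absorption with a small parameter. A parallel subtlety forces one to handle the pressure-type commutator $(\div u)\na\mathfrak a\cdot\dot u$ via the good unknown $\div u-\tfrac{\mathfrak a}{2\mu+\la}$ introduced in Step~1 of Proposition~\ref{energyprop}, replacing the singular piece by $\pa_t$-exact terms plus remainders controlled by $\|\mathfrak a\|_{L^6}$ and the existing dissipation.
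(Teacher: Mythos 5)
Your proposal follows essentially the same route as the paper: derive the $\dot u$-energy identity by taking the material derivative of the momentum equation and testing against $\dot u$, control the commutator terms by interpolating $\|\nabla u\|_{L^4}$ between $L^2$ and $L^6$, upgrade to $W^{1,6}$ dissipation through the elliptic system \eqref{eq:BD} with source $\rho\dot u$, and close via Gronwall using the $L^1_t$ integrability of $\|\nabla u\|_{L^6}^2$ already supplied by Proposition~\ref{energyprop}. One small imprecision worth fixing: the $L^6$ bound on $\nabla u$ that feeds the interpolation comes from the $L^2$ elliptic estimate and Sobolev, so it is controlled by $\|\sqrt{\rho}\dot u\|_{L^2}$ (not $\|\nabla\dot u\|_{L^2}$); this is precisely what makes Gronwall applicable, since the coefficient $\|\nabla u\|_{L^6}^2$ in front of $\|\sqrt{\rho}\dot u\|_{L^2}^2$ is known to be integrable in time.
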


\begin{proof}
We rewrite the second equation of $\mbox{(CNS)}$ as
\beno
\rho\dot{u}+\nabla  \mathfrak{a} +Lu=0.
\eeno
Then it is not difficult to check that 
\ben\label{eq41}
\begin{split}
&\rho \dot{u}_t+\rho u\cdot \nabla \dot{u}+\nabla \mathfrak{a}_{t}+\div(\nabla \mathfrak{a} \otimes u)\\
&\quad=\mu\big[\Delta u_t+\div(\Delta u\otimes u)\big]+
(\lambda+\mu)\big[\nabla\div u_{t}+\div((\nabla\div u)\otimes u)\big].
\end{split}
\een
By the  energy estimate, we derive that
\ben\label{eq42}
&&\f{d}{dt}\int \f{1}{2}\rho|\dot{u}|^2\,dx+\underbrace{-\mu\int \dot{u}\cdot\big(\Delta
u_t+\div(\Delta u\otimes u)\big)\,dx}_{\eqdefa I}\nonumber\\
&&\quad-(\lambda+\mu)\underbrace{\int \dot{u}\cdot\big((\nabla
\div u_{t})+\div((\nabla\div u)\otimes u))\big)\,dx}_{\eqdefa II}\\
&&=\underbrace{\int  \mathfrak{a}_{t}\div\dot{u}+(\dot{u}\cdot\nabla u)\cdot\nabla \mathfrak{a}\, dx}_{\eqdefa III}.\nonumber
\een
\underline{Estimate of $I$.} It is easy to check that
\beno
&&-\int \dot{u}\cdot\big(\Delta
u_t+\div(\Delta u\otimes u)\big)dx=\int \left[\nabla\dot{u}:\nabla u_t+ u\otimes\Delta u:\nabla \dot{u}\right]dx\\
&&=\int \Big[|\nabla\dot{u}|^2-\big((\nabla u\nabla u)+(u\cdot\nabla) \nabla u\big):\nabla\dot{u}-\nabla(u\cdot\nabla\dot{u}):\nabla u\Big]dx\\
&&=\int \Big[|\nabla\dot{u}|^2-(\nabla u\nabla u):\nabla\dot{u}+\big((u\cdot\nabla)\nabla\dot{u}\big):\nabla u
-(\nabla u\nabla\dot{u}):\nabla u-\big((u\cdot\nabla)\nabla\dot{u}\big):\nabla u\Big]dx\\
&&\geq \int \left[\f{3}{4}|\nabla\dot{u}|^2-C|\nabla u|^4\right]dx.
\eeno
\underline{Estimate of $II$.} Observe that
\beno
&&\div\big((\nabla\div u)\otimes u\big)=\nabla(u\cdot\nabla\div u)-\div(\div u\nabla\otimes u)+\nabla(\div u)^2,\\
&&\div\dot{u}=\div u_t+\div(u\cdot\nabla u)=\div u_t+u\cdot\nabla\div u+\nabla u:(\nabla u)^T,
\eeno
where $A^T$ means the transpose of matrix $A$. Then we get
\beno
&&-\int \dot{u}\cdot\Big[\nabla\div
u_{t}+\div\big((\nabla\div u)\otimes u\big)\Big]dx\\
&=&\int \Big[\div\dot{u}\div u_t+\div\dot{u}(u\cdot\nabla\div u) 
-\div u(\nabla\dot{u})^T:\nabla u+\div\dot{u}(\div u)^2\Big]dx\\
&=&\int \Big[|\div\dot{u}|^2-\div\dot{u}\nabla u:(\nabla u)^T-\div u(\nabla\dot{u})^T:\nabla u+\div\dot{u}(\div u)^2\Big]dx\\
&\geq&\int \Big[\f{1}{2}|\div\dot{u}|^2-\f{1}{4} |\nabla\dot{u}|^2-C|\nabla u|^4\Big]dx.
\eeno

\underline{Estimate of $III$.} We have
\beno
&&\int \mathfrak{a}_{t}\div\dot{u}+(u\cdot\nabla\dot{u})\cdot\nabla \mathfrak{a}\, dx
\\
&&=\int -\gamma\rho^{\gamma}  \div u\div\dot{u}-(u\cdot\nabla  a )\div\dot{u}+({u}\cdot\nabla\dot u)\cdot\nabla  \mathfrak{a}\, dx\\
&&=\int  -\gamma\rho^{\gamma}   \div u\div\dot{u}+\mathfrak{a}\Big[\div\big((\div\dot{u})u\big)-\div(({u}\cdot\nabla\dot u))\Big]dx\\
&&=\int  -\gamma\rho^{\gamma}  \div u\div\dot{u}+ \mathfrak{a} \Big[\div u\div\dot{u}-(\nabla u)^T:\nabla\dot{u}\Big]dx\\
&&\leq C\|\nabla u\|_{L^2  }\|\nabla\dot{u}\|_{L^2 } .
\eeno

Substituting these estimates into (\ref{eq42}) yields
\ben\label{eq43}
\begin{split}
&\f{d}{dt}\int \rho|\dot{u}|^2dx+\mu\int |\nabla\dot{u}|^2dx
+(\lambda+\mu)\int |\div\dot{u}|^2dx \\
&\leq
C\int |\nabla{u}|^4dx+C\|\nabla {u}\|^2_{L^2 }.
\end{split}
\een

To conclude the estimate by Gronwall's inequality, we will use the term $\|\sqrt{\rho}\dot{u}\|_{L^2 }$ to control $\|\nabla u\|_{L^4 }$. By Proposition \ref{energyprop} and \eqref{eq:BD}, we have
\beno
&&\|\na u\|_{L^\infty(0,\infty;L^2)}+\|\mathfrak{a}\|_{L^\infty(0,\infty;L^6)}\leq C, \\
&& \|\na b\|_{L^6}+  \|\Lambda d -\f{\mathfrak{a}}{\lambda+2\mu} \|_{L^6}\leq \|\rho\dot{u}\|_{L^2}\leq C  \|\sqrt{\rho}\dot{u}\|_{L^2},
\eeno
which together with \eqref{controlofu2} imply that
\begin{eqnarray*}
\|\nabla u\|^4_{L^4 }&\leq&\|\nabla u\|_{L^2 }\|\nabla
u\|^3_{L^6 }\leq C\|\nabla u\|_{L^6 }\|\nabla
u\|^2_{L^6 }\\
&\leq& C\|\nabla u\|^2_{L^6 }\big(\|\nabla
b\|_{L^6 }+\|\nabla d-\f{\mathfrak{a}}{\lambda+2\mu}\|_{L^6 }+\|\mathfrak{a}\|_{L^6}\big)\\
&\leq& C\|\nabla u\|^2_{L^6 }\Big(1+\|\sqrt{\rho}\dot{u}\|_{L^2 }\Big)\le C\|\nabla u\|^2_{L^6 }\Big(1+\|\sqrt{\rho}\dot{u}\|_{L^2 }^2\Big).
\end{eqnarray*}

Substituting this estimate into (\ref{eq43}) and noting that  $\|\nabla
u(t)\|^2_{L^6 }\in L^1(0,\infty)$ by Proposition \ref{energyprop}, we get by
Gronwall's inequality that
\begin{eqnarray}\label{eq:w-high}
\int \rho|\dot{u}|^2dx+\int^\infty_0\int |\nabla
\dot{u}|^2dxdt\leq C,
\end{eqnarray}
with $C$ depending only on $\overline{\rho}$ and $\rho_0,u_0$. By using this estimate, \eqref{eq43} can be improved as
\beno 
&&\f{d}{dt}\int \rho|\dot{u}|^2dx+\mu\int |\nabla\dot{u}|^2dx
+(\lambda+\mu)\int |\div\dot{u}|^2dx 
\leq
C(\|\na u\|_{L^6}^2+\|\nabla {u}\|^2_{L^2}),\eeno
from which together with  \eqref{energyest1}, \eqref{eq:BD} and Sobolev imbedding theorem will imply \eqref{energyest4}.
\end{proof}

\subsubsection{Estimate for the propagation of $\na \mathfrak{a} $}  In this subsection, we   want to give the proof to the upper bound of $\|\na u\|_{L^2((0,+\infty);L^\infty) }$ which in turn gives the estimates for propagation of $\na \mathfrak{a}$.   We want to prove:
\begin{prop}\label{prop_density}
Let $0<\alpha<1,$ $\mu>\frac12 \lam$ and $(\rho,u)$ be a global and smooth solution of $\mbox{(CNS)}$ with initial data $(\rho_0,u_0)$ verifying that $\rho_0\ge c>0,$ the admissible condition \eqref{admissible} and
\begin{equation}\label{holder}
\sup_{t\in \R^+} \|\rho(t,\cdot)\|_{C^\alpha }\leq M.
\end{equation}
Then
\ben\label{eq:blow}
 \|\mathfrak{a}\|_{L^\infty((0,+\infty);W^{1,6})\cap L^2((0,+\infty);W^{1,6})}+\|\na u\|_{L^2((0,+\infty);L^\infty) }\leq C,
\een
where $C$ depends on the initial data $(\rho_0, u_0)$ and $M$. As a consequence, there exists a constant $ \underline{\rho}=\underline{\rho}(c,M)>0$ such that for all $t\ge0$, $\rho(t,x)\ge \underline{\rho}$. Moreover, 
 \ben\label{L2naa}
	\pa_t\|\na \mathfrak{a}\|_{L^2}^2+\f{1}{4(\lambda+2\mu)}\|  \na \mathfrak{a}\|_{L^2}^2 &\leq&  
  C( \|\na \dot{u}\|_{L^2}^2+\|u\na u\|^2_{L^2}+\|\sqrt{\rho}u_t\|_{L^2}^2+\|\mathfrak{a}\|_{L^6}^2).\een 
 \end{prop}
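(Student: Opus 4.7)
}

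The overall strategy is a bootstrap: upgrade the regularity provided by Propositions \ref{energyprop} and \ref{Prop41} to a control of $\mathfrak{a}$ in $W^{1,6}$ and of $\nabla u$ in $L^2_tL^\infty_x$, then derive the positive lower bound on $\rho$ from the continuity equation along characteristics, and finally extract the announced dissipation estimate for $\|\nabla\mathfrak{a}\|_{L^2}$ from the effective pressure equation. Throughout, the fundamental object is the \emph{effective viscous flux} $G:=(\lambda+2\mu)\div u-\mathfrak{a}$, which by applying $\div$ to $\rho\dot u+\nabla\mathfrak{a}+Lu=0$ satisfies $\Delta G=\div(\rho\dot u)$, hence $\|\nabla G\|_{L^p}\lesssim\|\rho\dot u\|_{L^p}$ and $\|\nabla^2 G\|_{L^p}\lesssim\|\nabla(\rho\dot u)\|_{L^p}$ by Calder\'on--Zygmund, while $G$ coincides (up to the factor $\lambda+2\mu$) with the quantity $\Lambda d-\mathfrak{a}/(\lambda+2\mu)$ already controlled in Proposition \ref{Prop41}.

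\smallskip First, I would apply $\nabla$ to the effective pressure equation \eqref{enep5} and pair with $|\nabla\mathfrak{a}|^4\nabla\mathfrak{a}$; this produces the damping $\tfrac{1}{\lambda+2\mu}\|\nabla\mathfrak{a}\|_{L^6}^6$, a transport commutator bounded by $\|\nabla u\|_{L^\infty}\|\nabla\mathfrak{a}\|_{L^6}^6$, a term of the form $\|\mathfrak{a}\|_{L^\infty}\|\nabla^2 u\|_{L^6}\|\nabla\mathfrak{a}\|_{L^6}^5$, and a forcing $\|\nabla G\|_{L^6}\|\nabla\mathfrak{a}\|_{L^6}^5$. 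The bound on $\|\nabla u\|_{L^\infty}$ is obtained from the Helmholtz decomposition \eqref{eq:BD} together with the Sobolev embedding $W^{1,6}\hookrightarrow L^\infty$ in $\R^3$: one has $\|\nabla u\|_{L^\infty}\lesssim\|\nabla^2 b\|_{L^6}+\|\nabla G\|_{L^6}+\|\mathfrak{a}\|_{L^\infty}$, and $\|\nabla^2 b\|_{L^6}+\|\nabla G\|_{L^6}\lesssim \|\rho\dot u\|_{L^6}\lesssim \|\nabla\dot u\|_{L^2}$ using Sobolev for $\dot u$ and the $W^{1,6}$ control of $\mathfrak{a}$ being set up. Since Proposition \ref{Prop41} gives $\nabla\dot u\in L^2(0,\infty;L^2)$, this yields $\|\nabla u\|_{L^\infty}\in L^2(0,\infty)$ after the bootstrap, and in particular the $W^{1,6}$ bound closes via Gr\"onwall.

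\smallskip For the lower bound on $\rho$ I would split into two time regimes. From the continuity equation, along the flow $X$ of $u$ one has $\rho(t,X(t))=\rho_0(X(0))\exp\big(-\int_0^t\div u(s,X(s))\,ds\big)$. By interpolating the a priori bound $\|\mathfrak{a}\|_{C^\alpha}\le M\,$ with the integrability $\mathfrak{a}\in L^2((0,\infty);L^6)$ already established, one gets $\|\mathfrak{a}(t_n)\|_{L^\infty}\to 0$ along a sequence $t_n\to\infty$; combined with the continuity of $t\mapsto\|\mathfrak{a}(t)\|_{L^6}$ (which follows from the uniform-in-time bounds, so $\|\mathfrak{a}(t)\|_{L^6}\to 0$) one obtains an explicit $T_0=T_0(c,M)$ beyond which $\rho\ge 1/2$. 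On $[0,T_0]$ the bound $\|\div u\|_{L^1_tL^\infty_x}\le T_0^{1/2}\|\nabla u\|_{L^2_tL^\infty_x}$ together with the exponential representation produces a uniform positive lower bound, whence $\underline{\rho}$.

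\smallskip Finally, to obtain the stated inequality, I would apply $\nabla$ to \eqref{enep5} and take the $L^2$ inner product with $\nabla\mathfrak{a}$. The damping is $\tfrac{1}{\lambda+2\mu}\|\nabla\mathfrak{a}\|_{L^2}^2$, and integration by parts in the transport term $u\cdot\nabla^2\mathfrak{a}$ leaves only commutator contributions of the form $\int(\nabla u)\nabla\mathfrak{a}\cdot\nabla\mathfrak{a}$, absorbed by $\eta\|\nabla\mathfrak{a}\|_{L^2}^2$ via the smallness of $\|\nabla u\|_{L^\infty}$ once $t$ is large (and by direct estimation for bounded $t$). The term $\mathfrak{a}\nabla\div u$ is controlled through $\nabla\div u=\tfrac{1}{\lambda+2\mu}(\nabla G+\nabla\mathfrak{a})$ combined with $\|\nabla^2 G\|_{L^2}\lesssim\|\rho\|_{L^\infty}\|\nabla\dot u\|_{L^2}+\|\nabla\mathfrak{a}\|_{L^3}\|\dot u\|_{L^6}$, which is precisely where the term $\|\nabla\dot u\|_{L^2}^2$ appears on the right-hand side, while $\|u\nabla u\|_{L^2}^2$ and $\|\sqrt{\rho}u_t\|_{L^2}^2$ come from the elliptic bound $\|\nabla G\|_{L^2}\lesssim\|\rho\dot u\|_{L^2}\lesssim\|\sqrt{\rho}u_t\|_{L^2}+\|u\nabla u\|_{L^2}$, and $\|\mathfrak{a}\|_{L^6}^2$ absorbs lower-order interpolation remainders.

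\smallskip The main obstacle is the bootstrap in Step~1--2: one needs $\|\nabla u\|_{L^\infty}\in L^2_t$ to propagate the $W^{1,6}$ bound on $\mathfrak{a}$, but the elliptic control of $\|\nabla u\|_{L^\infty}$ itself requires the $W^{1,6}$ bound on $\mathfrak{a}$ through $\|\nabla(\rho\dot u)\|_{L^6}$. Breaking this loop relies on the smallness of $\|\mathfrak{a}\|_{L^\infty}$ afforded by the H\"older assumption \eqref{holder}, which allows $\|\mathfrak{a}\|_{L^\infty}\|\nabla^2 u\|_{L^6}$-type terms to be absorbed by the dissipation; this is the step where the hypothesis $\|\rho\|_{C^\alpha}\le M$ is truly used.
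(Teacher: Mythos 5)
Your overall architecture (pair the gradient of the effective-pressure equation with $|\nabla\mathfrak{a}|^{p-2}\nabla\mathfrak{a}$ for $p=6$ and then $p=2$, feed the resulting differential inequalities with the elliptic estimates on $\cP u$ and the viscous flux, and split time for the lower bound on $\rho$) matches the paper's. But there is a genuine gap precisely at the step you yourself flag as delicate, namely the control of $\|\nabla u\|_{L^\infty}$, and the mechanism you propose to break the bootstrap loop does not work. The inequality $\|\nabla u\|_{L^\infty}\lesssim\|\nabla^2 b\|_{L^6}+\|\nabla G\|_{L^6}+\|\mathfrak{a}\|_{L^\infty}$ is false: writing $\nabla\cQ u=-\nabla^2(-\Delta)^{-1}\div u$ and $\div u=\tfrac{1}{\lambda+2\mu}(G+\mathfrak{a})$, the operator $\nabla^2(-\Delta)^{-1}$ is a Riesz-type Calder\'on--Zygmund operator, which is \emph{not} bounded from $L^\infty$ to $L^\infty$ (it maps $L^\infty$ to $BMO$ only), so you cannot pass from $\|\mathfrak{a}\|_{L^\infty}$ to $\|\nabla\cQ u\|_{L^\infty}$. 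Relatedly, your diagnosis that the hypothesis $\|\rho\|_{C^\alpha}\le M$ is used to provide ``smallness of $\|\mathfrak{a}\|_{L^\infty}$'' is incorrect: the hypothesis gives boundedness, never smallness. What the paper actually does is a Littlewood--Paley interpolation across frequencies: splitting at a scale $2^N$, it shows
$\|\nabla\Lambda^{-1}\mathfrak{a}\|_{L^\infty}\le 2^{N/2}\|\mathfrak{a}\|_{L^6}+2^{-N\alpha}\|\mathfrak{a}\|_{C^\alpha}$, then optimizes in $N$ to get
$\|\nabla\Lambda^{-1}\mathfrak{a}\|_{L^\infty}\lesssim\|\mathfrak{a}\|_{L^6}^{\beta}\|\mathfrak{a}\|_{C^\alpha}^{1-\beta}$ with $\beta=1-\tfrac{1}{1+2\alpha}\in(0,1)$. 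Squaring and applying Young's inequality produces $C_\eta\|\mathfrak{a}\|_{L^6}^2+\eta\|\mathfrak{a}\|_{C^\alpha}^2$; since $\|\mathfrak{a}\|_{C^\alpha}^2\lesssim M^2$ is constant, the bad term $\eta C M^2\|\nabla\mathfrak{a}\|_{L^p}^2$ appearing in \eqref{lpa} is absorbed into the dissipation $\tfrac{1}{(\lambda+2\mu)p}\|\nabla\mathfrak{a}\|_{L^p}^2$ by choosing $\eta$ small relative to $M$, and the remaining coefficients are $L^1$ in time thanks to Proposition~\ref{Prop41}. This is the actual role of the $C^\alpha$ hypothesis, and it cannot be replaced by a plain $L^\infty$ bound on $\mathfrak{a}$.

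A secondary, smaller inaccuracy: in your derivation of \eqref{L2naa} you absorb the commutator $\int(\nabla u)\nabla\mathfrak{a}\cdot\nabla\mathfrak{a}$ invoking ``smallness of $\|\nabla u\|_{L^\infty}$ once $t$ is large'', but $\|\nabla u\|_{L^\infty}\in L^2_t$ does not give smallness for large $t$. The paper instead first closes the $p=2$ Gr\"onwall (using the same $L^1_t$ structure of coefficients) to obtain the uniform bound $\|\nabla\mathfrak{a}(t)\|_{L^2}\le C$, and only then replaces $F(t)\|\nabla\mathfrak{a}\|_{L^2}^2$ by $CF(t)$ to read off \eqref{L2naa} from \eqref{lpa} together with the elliptic estimate for \eqref{eq:BD}. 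Your lower-bound argument for $\rho$ (representation along characteristics with $\|\div u\|_{L^1_tL^\infty_x}\lesssim t^{1/2}\|\nabla u\|_{L^2_tL^\infty_x}$ on bounded time, plus $\|a(t)\|_{L^\infty}\to 0$ by interpolating $L^6\to 0$ against the $C^\alpha$ bound) is essentially the paper's and is fine.
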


\begin{proof} 
First, because of $x^\gamma$ is convex function when $\gamma>1$, \eqref{holder} implies that
\beno
\sup_{t\in \R^+} \|\rho^\gamma(t,\cdot)\|_{C^\alpha }\leq M.
\eeno
Next, we have the interpolation inequality
\beno
\| \na \Lambda^{-1}   \mathfrak{a}\|_{L^\infty}&\leq & 2^{\f N2} \|\mathfrak{a}\|_{L^6}+\sum_{j\geq N}2^{-j \alpha  } (2^{j \alpha  }\|\ddj  \mathfrak{a}\|_{L^\infty})\le 2^{\f N2} \|\mathfrak{a}\|_{L^6}+ 2^{-N\alpha  }  \|  \mathfrak{a}\|_{C^{\alpha}}.
\eeno
Choosing $2^{N(\f12+\alpha )}=\f{ \|  \mathfrak{a}\|_{C^{\alpha}} }{\|\mathfrak{a}\|_{L^6}}$, we get that
$
\| \na \Lambda^{-1}   \mathfrak{a}\|_{L^\infty}\leq \|\mathfrak{a}\|_{L^6}^\beta \|\mathfrak{a}\|_{C^{\alpha}}^{1-\beta} 
$
with $\beta=1-\f{1}{1+2\alpha}\in (0,1)$, which implies that
\ben\label{nau} 
\begin{split}
\|\nabla u\|_{L^\infty}^2&\leq C(\|\nabla \cP u\|^2_{L^\infty}+\| \na \Lambda^{-1}  (\div u-\f{1}{2\mu+\lambda}\mathfrak{a})\|^2_{L^\infty}+\| \na \Lambda^{-1} \mathfrak{a}\|^2_{L^\infty})\\
&\leq C(\|\nabla \cP u\|_{W^{1,6}}^2+\| (\div u-\f{1}{2\mu+\lambda}\mathfrak{a})\|^2_{W^{1,6}}+C_\eta\|\mathfrak{a}\|_{L^6}^2+\eta\|\mathfrak{a}\|_{C^{\alpha}}^{2}).
\end{split}
 \een

On the other hand, it is not difficult to derive that 
\ben\label{eq52}
&&\f{1}{\gamma}(\partial_t \nabla \mathfrak{a}+(u\cdot\nabla)\nabla \mathfrak{a})+ \f{\rho}{\lambda+2\mu} \nabla \mathfrak{a}+\f1{\gamma}\nabla u\nabla \mathfrak{a}+ \div u \nabla \mathfrak{a}  =-\rho \na(\div u -\f{1}{\lambda+2\mu}\mathfrak{a}) .
\een
 Then by energy estimates, we can derive that for $p\ge 2$,
\beno
 \f12\pa_t \|\nabla \mathfrak{a}\|_{L^p}^2  +\f{1}{(\lambda+2\mu)p}\| \nabla \mathfrak{a}  \|_{L^p}^2 &\leq& C( \|\na u\|_{L^\infty}\|\nabla \mathfrak{a}\|_{L^p}^2+\|\div u-\f{1}{2\mu+\lambda}\mathfrak{a}\|_{L^\infty}\|\na \mathfrak{a}\|_{L^p}^2
 \\&&+ \|\nabla(\div u-\f{1}{\lambda+2\mu}\mathfrak{a})\|_{L^{p}}  \|\na \mathfrak{a}\|_{L^p}),
\eeno
which implies that
\ben\label{lpa}
\begin{split}
 \pa_t \|\nabla \mathfrak{a}\|_{L^p}^2  +\f{1}{(\lambda+2\mu)p}\| \nabla \mathfrak{a}  \|_{L^p}^2 \leq& C( \|\na u\|^2_{L^\infty}\|\nabla \mathfrak{a}\|_{L^p}^2+\|\div u-\f{1}{2\mu+\lambda}\mathfrak{a}\|^2_{W^{1,6}}\|\na \mathfrak{a}\|_{L^p}^2
 \\&+ \|\nabla(\div u-\f{1}{\lambda+2\mu}\mathfrak{a})\|_{L^{p}}^2).
 \end{split}
\een
By taking $p=6$ in \eqref{lpa} and using \eqref{nau} and \eqref{energyest4}, we obtain from Gronwall's inequality that 
 $\|\mathfrak{a}\|_{L^\infty((0,+\infty);W^{1,6})\cap L^2((0,+\infty);W^{1,6})}\leq C$, from which together with \eqref{nau} implies that 
 $  \|\na u\|_{L^2((0,+\infty);L^\infty) }\leq C$.  It completes the proof to \eqref{eq:blow}.
 
  Now we go back to \eqref{lpa} with $p=2$. By Gronwall's inequality, we obtain that  $\na \mathfrak{a}\in L^\infty((0,+\infty);L^2)\cap L^2((0,+\infty);L^2)$. Thanks to the  uniform-in-time bounds obtained in the above, \eqref{lpa} with $p=2$ will yield that
\beno \pa_t \|\nabla \mathfrak{a}\|_{L^2}^2  +\f{1}{4(\lambda+2\mu)}\| \nabla \mathfrak{a}  \|_{L^2}^2 &\leq &C( \|\nabla(\div u-\f{1}{\lambda+2\mu}\mathfrak{a})\|_{L^{2}}^2+\|\div u-\f{1}{2\mu+\lambda}\mathfrak{a}\|^2_{W^{1,6}} \\
&&\quad +\|\nabla \cP u\|_{W^{1,6}}^2+C_\eta\|\mathfrak{a}\|_{L^6}^2,
\eeno
from which together with the elliptic estimate for \eqref{eq:BD}, we obtain \eqref{L2naa}.

Now using the equation of density and the estimate \eqref{eq:blow}, we have
\beno
\rho(t,x)\geq \rho_0(x)\exp^{-\int_{0}^t\|\div u\|_{L^\infty}d\tau}\ge ce^{-Ct^{\f12}}.
\eeno
 On the other hand, thanks to \eqref{afraka} and  \eqref{est:a}, we  derive that $\lim\limits_{t\rightarrow \infty} \|a(t)\|_{L^6}=\lim\limits_{t\rightarrow \infty} \|\mathfrak{a}(t)\|_{L^6}=0$, from which together with upper bounds for $\rho$ in $C^\alpha$,  we derive that 
$\lim\limits_{t\rightarrow \infty} \|a(t)\|_{L^\infty}=0$.
These two facts imply that there exists a constant $ \underline{\rho}=\underline{\rho}(c,M)>0$ such that for all $t\ge0$, $\rho(t,x)\ge \underline{\rho}$. We complete the proof to the proposition.
\end{proof}

 \subsubsection{Deriving the dissipation inequality} We want to prove 
\begin{prop}\label{energy_prop_main}
Let $0<\alpha<1,$ $\mu>\frac12 \lam,$ and $(\rho,u)$ be a global and smooth solution of $\mbox{(CNS)}$ with initial data $(\rho_0,u_0)$ verifying that $\rho_0\ge c>0,$ the admissible condition \eqref{admissible} and
$
\sup_{t\in \R^+} \|\rho(t,\cdot)\|_{C^\alpha }\leq M.
$ Then  there exist
constants $A_i(i=1,\cdots,7)$ are positive constants depending on $\mu,$ $\lam$ and $M$ such that  
\beno X(t)&\eqdefa&   A_1\|\rho^{\f14} u\|_{L^4}^4+A_2\big( \mu\|\na u\|_{L^2}^2+(\lam+\mu)\|\div u\|_{L^2}^2-(\mathfrak{a}, \div u)+\int f(\rho)dx\big)+A_3\|\mathfrak{a}\|_{L^6}^2\\
&&+A_4\big(\int H(\rho|1)\,dx+\|\sqrt{\rho} u\|_{L^2}^2\big)+A_5\|\sqrt{\rho}\dot{u}\|_{L^2}^2+A_6\|\na \mathfrak{a}\|_{L^2}^2\\
&&\sim \|    u\|_{H^1}^2+\| a\|_{H^1}^2 +\| \dot u\|_{L^2}^2, \eeno
which verifies
 \begin{equation}\label{energyest1_main}
\begin{split}
\frac{d}{dt} X(t)
+A_7\Big( \|\na^2 u\|_{L^2}^2+\|\na   u\|_{L^2}^2+\| \na a\|_{L^2}^2 +\|\na\dot u\|_{L^2}^2\Big)\leq 0.
\end{split}
\end{equation}
\end{prop}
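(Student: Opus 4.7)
The plan is to derive \eqref{energyest1_main} by forming a suitable linear combination of the inequality \eqref{energyest4} from Proposition \ref{Prop41} and the inequality \eqref{L2naa} from Proposition \ref{prop_density}. Specifically, I define $X(t)$ by appending $A_6\|\na \mathfrak{a}\|_{L^2}^2$ to the Lyapunov quantity differentiated on the left of \eqref{energyest4}, and then add $A_6$ times \eqref{L2naa} to \eqref{energyest4}. The central observation is that each of the four terms on the right-hand side of \eqref{L2naa}, namely $\|\na \dot u\|_{L^2}^2$, $\||u||\na u|\|_{L^2}^2$, $\|\sqrt\rho u_t\|_{L^2}^2$, and $\|\mathfrak{a}\|_{L^6}^2$, already appears in the dissipation on the left of \eqref{energyest4}. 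Choosing $A_6$ small enough relative to the constants from Proposition \ref{Prop41} therefore allows one to absorb $A_6 C$ copies of these four quantities into the existing dissipation, yielding an inequality of the form $\frac{d}{dt}X(t) + \mathcal D(t)\le 0$ where $\mathcal D(t)$ retains a definite fraction of each dissipation term in \eqref{energyest4} together with $\|\na \mathfrak{a}\|_{L^2}^2$.

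The second step is to check that $\mathcal D(t)$ dominates the four target quantities $\|\na^2 u\|_{L^2}^2 + \|\na u\|_{L^2}^2 + \|\na a\|_{L^2}^2 + \|\na \dot u\|_{L^2}^2$. Two of them, $\|\na u\|_{L^2}^2$ and $\|\na \dot u\|_{L^2}^2$, are explicit in $\mathcal D(t)$. Since $\na \mathfrak{a} = \gamma \rho^{\gamma-1}\na a$ and Proposition \ref{prop_density} gives $\underline\rho \le \rho \le \overline\rho$, we have $\|\na a\|_{L^2}^2 \sim \|\na \mathfrak{a}\|_{L^2}^2$. For the Hessian, the Helmholtz decomposition $u = \cP u + \cQ u$ combined with the splitting $\div u = (\div u - \tfrac{\mathfrak{a}}{\lambda+2\mu}) + \tfrac{\mathfrak{a}}{\lambda+2\mu}$ gives
\[
\|\na^2 u\|_{L^2}^2 \lesssim \|\Delta \cP u\|_{L^2}^2 + \|\na(\div u - \tfrac{\mathfrak{a}}{\lambda+2\mu})\|_{L^2}^2 + \|\na \mathfrak{a}\|_{L^2}^2,
\]
and all three pieces are present in $\mathcal D(t)$.

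Finally, I would verify the norm equivalence $X(t) \sim \|u\|_{H^1}^2 + \|a\|_{H^1}^2 + \|\dot u\|_{L^2}^2$. By the lower and upper bounds on $\rho$ we have $\|\sqrt\rho u\|_{L^2}^2 + \|\na u\|_{L^2}^2 \sim \|u\|_{H^1}^2$ and $\|\sqrt\rho \dot u\|_{L^2}^2 \sim \|\dot u\|_{L^2}^2$, and Remark \ref{frakaequa} together with $\|\na a\|_{L^2}^2 \sim \|\na \mathfrak{a}\|_{L^2}^2$ yields $\int H(\rho|1)\,dx + \|\na \mathfrak{a}\|_{L^2}^2 \sim \|a\|_{H^1}^2$. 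The extraneous contributions $\|\rho^{1/4}u\|_{L^4}^4$, $\|\mathfrak{a}\|_{L^6}^2$, $\int f(\rho)\,dx$, and the cross term $-(\mathfrak{a},\div u)$ are controlled by $\|u\|_{H^1}^2 + \|a\|_{H^1}^2$ via Sobolev embedding, Cauchy--Schwarz, and $|f(\rho)| \lesssim H(\rho|1)$; after choosing $A_2$ sufficiently small relative to $A_4$ (the same hierarchy already exploited in Proposition \ref{Prop41}), $X(t)$ is nonnegative and two-sided bounded by the target norm.

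The main difficulty is not analytic but \emph{combinatorial}: one must choose the constants $A_1,\ldots,A_6$ in the correct hierarchy so that (i) the indefinite and cross terms inside $X(t)$ remain absorbed, (ii) appending $A_6\|\na \mathfrak{a}\|_{L^2}^2$ does not compromise the dissipation coefficients already fixed in \eqref{energyest4}, and (iii) a uniform $A_7$ can be extracted at the end. Because every right-hand term of \eqref{L2naa} is strictly dominated by a dissipation term already present in \eqref{energyest4}, no additional estimate beyond the pointwise comparison $\na \mathfrak{a} \sim \na a$ is required.
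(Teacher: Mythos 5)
Your proposal is correct and follows essentially the same route as the paper: the paper's proof likewise combines \eqref{L2naa} (scaled by a small $A_6$) with \eqref{energyest4}, absorbs the right-hand side of \eqref{L2naa} into the dissipation already present in \eqref{energyest4}, uses the elliptic estimate \eqref{eq:BD}/\eqref{controlofu1} to recover $\|\na^2 u\|_{L^2}^2$, and invokes $\underline\rho\le\rho\le M$ together with Remark \ref{frakaequa} and \eqref{afraka} for the norm equivalence. Your write-up simply spells out the absorption and the constant hierarchy more explicitly than the paper does.
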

\begin{proof} Thanks to Proposition \ref{prop_density}, we may assume that $\underline{\rho}\le \rho\le M$. From \eqref{L2naa} and \eqref{energyest4}, we get that there exist constants $A_i(i=1,\cdots,7)$ such that 
\beno 
&&\frac{d}{dt}  \big[ A_1\|\rho^{\f14} u\|_{L^4}^4+A_2\big( \mu\|\na u\|_{L^2}^2+(\lam+\mu)\|\div u\|_{L^2}^2-(\mathfrak{a}, \div u)+\int f(\rho)dx\big)+A_3\|  \mathfrak{a}\|_{L^6}^2\\
&&+A_4\big(\int H(\rho|1)\,dx+\|\sqrt{\rho} u\|_{L^2}^2\big)+A_5\|\sqrt{\rho}\dot{u}\|_{L^2}^2+A_6\|\na \mathfrak{a}\|_{L^2}^2\big]\\&&
+A_7\Big( \|\na^2 u\|_{L^2}^2+\|\na   u\|_{L^2}^2+\| \na \mathfrak{a}\|_{L^2}^2 +\|\na\dot u\|_{L^2}^2\Big)\leq 0.
\eeno	
  Thanks to the energy identity \eqref{energyest2}, the constant $A_4$ can be chosen large enough to ensure that $X(t)\ge0$. Due to the condition $\underline{\rho}\le \rho\le M$, one has 
  $\|\na a\|_{L^2}\sim \|\na \mathfrak{a}\|_{L^2}$ and $\int H(\rho|1)\,dx\sim \|\rho-1\|_{L^2}^2$, from which together with  \eqref{afraka} and $\rho \dot u+\na \mathfrak{a}=\mu\Delta u+(\lam+\mu)\na\div u$, we deduce that   $X(t)\sim \|u\|_{H^2}^2+\| a\|_{H^1}^2 +\| \dot u\|_{L^2}^2$.  It ends the proof of the proposition.
\end{proof}

  \subsection{Convergence to the equilibrium}
The aim of this subsection is to show the convergence of the solution to the equilibrium. Thanks to  Proposition \ref{prop_density}, now we may assume that $\rho\geq \underline{\rho}$.

 We begin with a crucial lemma on the estimate of the low frequency part of the solution.

\begin{lem}\label{conlf}  Let $a_0,\rho_0 u_0\in L^{p_0}(\R^3)$ with $p_0\in [1,2]$. Then if $\rho(t,x)\le M$, we have
\ben\label{conlfsol}
\begin{split}
 \int_{S(t)} \big( \gamma|\hat{a}(\xi,t)|^2+|\widehat{\rho u}(\xi,t)|^2 \big) \,d\xi \leq&  
  C(M)\big(\|a_0\|_{L^{p_0}}^2+ \|\rho_0 u_0\|_{L^{p_0}}^2\big)(1+t)^{-2\beta({p_0})}\\&+ C(M)(1+t)^{-\f32}\int_0^t \big( \|u\|_{L^2}^4+ \|a\|_{L^2}^4\big)\,ds,
  \end{split}
\een
where $S(t) =\{\xi\in \R^3: |\xi|\leq C(1+t)^{-\f12}  \}.$
\end{lem}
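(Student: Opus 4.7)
The plan is to recast $(\mbox{CNS})$ as a dissipative perturbation of a linear system for $(a,m)$ with $m \eqdefa \rho u$, and to run an energy identity on the Fourier side over the ball $S(t)$. The continuity equation already reads $\partial_t a + \div m = 0$. For the momentum equation, I start from its conservative form $\partial_t m + \div(m \otimes u) + \nabla \rho^\gamma + Lu = 0$ and use $m-u = au$ to substitute $Lu = Lm - L(au)$, which gives
$$
\partial_t m + Lm + \gamma \nabla a = N, \qquad N \eqdefa L(au) - \div(m\otimes u) - \nabla(\rho^\gamma - 1 - \gamma a).
$$
The point is that the linear part is now dissipative in the very norm $\gamma|\hat a|^2 + |\hat m|^2$ appearing in the lemma. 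On the Fourier side, multiplying the continuity equation by $\gamma \bar{\hat a}$ and the $m$-equation by $\bar{\hat m}$, the imaginary cross terms $\gamma \mathrm{Re}(\bar{\hat a}\, i\xi\cdot\hat m)$ and $\gamma \mathrm{Re}(\bar{\hat m}\cdot i\xi \hat a)$ cancel, while $\mathrm{Re}(\bar{\hat m}\cdot \widehat{Lm}) = \mu|\xi|^2|\hat m|^2 + (\lambda+\mu)|\xi\cdot\hat m|^2 \ge 0$, yielding
$$
\tfrac12 \partial_t\bigl(\gamma|\hat a|^2 + |\hat m|^2\bigr) + \mu|\xi|^2|\hat m|^2 + (\lambda+\mu)|\xi\cdot \hat m|^2 = \mathrm{Re}\bigl(\hat N \cdot \bar{\hat m}\bigr).
$$

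Fix the target time $t_*>0$, set $R = C(1+t_*)^{-1/2}$, and for $t\in[0,t_*]$ put $F(t) \eqdefa \int_{B_R}(\gamma|\hat a|^2 + |\hat m|^2)\,d\xi$ and $G(t) \eqdefa \int_{B_R}|\hat N|^2\,d\xi$, where the ball $B_R$ is frozen at its size at the target time $t_*$ (so that the forcing term does not acquire a boundary contribution in $t$). Integrating the Fourier identity over $B_R$, discarding the nonnegative dissipation, and using $2|\hat N||\hat m| \le 2F^{1/2}G^{1/2}$, I get $\sqrt{F(t_*)} \le \sqrt{F(0)} + \int_0^{t_*}\sqrt{G(s)}\,ds$, hence $F(t_*) \le 2F(0) + 2t_*\int_0^{t_*}G(s)\,ds$ by Cauchy--Schwarz in $s$. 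For $F(0)$, Hausdorff--Young (valid since $p_0\in[1,2]$) together with H\"older on the ball $B_R$ gives
$$
F(0) \le C R^{3(2/p_0-1)}\bigl(\|a_0\|_{L^{p_0}}^2 + \|\rho_0 u_0\|_{L^{p_0}}^2\bigr) = C(1+t_*)^{-2\beta(p_0)}\bigl(\|a_0\|_{L^{p_0}}^2 + \|\rho_0 u_0\|_{L^{p_0}}^2\bigr),
$$
which is exactly the first term in the claim. For $G(s)$, I use $|\hat f(\xi)|\le \|f\|_{L^1}$ together with $\|m\otimes u\|_{L^1}\le M\|u\|_{L^2}^2$, $\|au\|_{L^1}\le \|a\|_{L^2}\|u\|_{L^2}$, and the second-order Taylor bound $|\rho^\gamma - 1 - \gamma a|\le C(M)a^2$ (legitimate since $\rho\le M$), to obtain the pointwise estimate
$$
|\hat N(\xi,s)|^2 \le C(M)\bigl[|\xi|^2\bigl(\|u(s)\|_{L^2}^4 + \|a(s)\|_{L^2}^4\bigr) + |\xi|^4 \|u(s)\|_{L^2}^2 \|a(s)\|_{L^2}^2\bigr].
$$
Integrating in $\xi$ on $B_R$ gives $G(s)\le C(M) R^5(\|u(s)\|_{L^2}^4 + \|a(s)\|_{L^2}^4)$ (the $|\xi|^4$ piece being dominated on $B_R$), and the scaling $t_*R^5 = t_*(1+t_*)^{-5/2}\le (1+t_*)^{-3/2}$ produces exactly the second piece of the bound.

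The main obstacle is the choice of unknown. If one keeps the raw viscous term $Lu$ acting on $u$ rather than on $m$, the Fourier energy identity for $\gamma|\hat a|^2 + |\hat m|^2$ leaves a residual $\mathrm{Re}(\bar{\hat m}\cdot \widehat{Lu})$ that is not sign-definite and cannot be absorbed into the two right-hand-side terms allowed by the lemma. The substitution $Lu = Lm - L(au)$ transfers the dissipation onto $m$ at the price of one extra nonlinear error $L(au)$; its symbol carries two $|\xi|$'s and produces only an $R^7$ contribution to $G(s)$, which is strictly better than the $R^5$ contributions of the other two nonlinearities and is therefore harmless. This is the concrete manifestation of the ``cancellation and coupling effect'' advertised in the introduction.
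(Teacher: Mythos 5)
Your proof is correct, and it is close in spirit to the paper's but genuinely different in two places. Both arguments run a Fourier-side energy identity for the same quantity $\gamma|\hat a|^2+|\widehat{\rho u}|^2$ on the frozen ball $S(t)$, exploit the same cross-term cancellation between the pressure and $\div(\rho u)$, bound the nonlinear Fourier transforms in $L^\infty$ via $L^1$ norms, and use Hausdorff--Young for the initial data. Where you diverge is in how the dissipation is extracted and used. The paper keeps the viscous term as $Lu$ and pairs it against $\overline{\widehat{\rho u}}=\bar{\hat u}+\overline{\widehat{au}}$, so the dissipation appears as $\mu|\xi|^2|\hat u|^2+(\lambda+\mu)|\xi\cdot\hat u|^2$, while the cross piece against $\overline{\widehat{au}}$ is one of the error terms $B_2$; since the dissipation then lives on $\hat u$ but the energy lives on $\hat m$, the paper is forced to use Young's inequality with a small $\eta$ to absorb the linear-in-$|\hat u|$ pieces into the dissipation before discarding it. Your substitution $Lu=Lm-L(au)$ puts the dissipation directly on $\hat m$, the same variable that appears in $F(t)$; this lets you pair all nonlinear forcings with $\bar{\hat m}$, bound them uniformly by $\sqrt{G(t)}\sqrt{F(t)}$, discard the dissipation outright, and close with the elementary ODE argument $(\sqrt F)'\le\sqrt G$ followed by Cauchy--Schwarz in time. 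The paper's Young-absorption is replaced in your argument by the single extra error term $L(au)$, whose symbol carries $|\xi|^2$ and therefore, as you note, contributes only at order $R^7\lesssim R^5$ on $S(t)$ and is harmless. The one point worth spelling out, which you use implicitly, is the elementary but $\gamma$-dependent inequality $|\rho^\gamma-1-\gamma a|\le C(M)\,a^2$ for $0\le\rho\le M$; this is not just a two-term Taylor estimate (which would fail to be uniform near $\rho=0$ when $1\le\gamma<2$) but follows from continuity of $H(\rho|1)/(\rho-1)^2$ on $[0,M]$, which is exactly what the paper's Remark~2.1 is recording. With that observation made explicit, your argument is a complete and slightly cleaner proof of the lemma.
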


\begin{proof} Note that $\rho_t=a_t,$ we take the Fourier transform of $\mbox{(CNS)}$, and then multiply $\gamma\bar{\hat{a}}$ to the first equation, multiply $\overline{\widehat{\rho u}}$ to the second equation respectively to obtain that
	\beno
	\left\{
	\begin{aligned}
		& \f12\f{d}{dt} \gamma|\hat{a}|^2 + i\gamma \xi \cdot \widehat{\rho u} \bar{\hat{a}}=0,\\
		&\f12\f{d}{dt} |\widehat{\rho u}|^2+\big(\widehat{\div(\rho u\otimes u)}-\mu\widehat{\Delta u}-(\lam+\mu)\widehat{\na\div u} \big) \cdot \overline{\widehat{\rho u}}+ i  \xi\big((\gamma-1)\widehat{H(\rho|1)}+\gamma \hat{a} \big) \cdot \overline{\widehat{\rho u}}=0,
	\end{aligned}
	\right.
	\eeno
	which implies that
	\beno
	\f12 \f{d}{dt} \big(\gamma|\hat{a}|^2+  |\widehat{\rho u}|^2\big)=Re\big[ -\widehat{\div(\rho u\otimes u)}+\mu\widehat{\Delta u}+(\lam+\mu)\widehat{\na\div u}+ i  (\gamma-1)\xi \widehat{H(\rho|1)}  \big]\cdot\overline{\widehat{\rho u}}  \overset{\text{def}} {=}F(\xi,t).
	\eeno
	Integrating the above equation with time $t$, we get that
	\beno
	\gamma|\hat{a}(\xi,t)|^2+|\widehat{\rho u}(\xi,t)|^2=\gamma|\hat{a}(\xi,0)|^2+|\widehat{\rho u}(\xi,0)|^2 +2\int_0^t F(\xi, s)\,ds.
	\eeno
	Let $S(t)  \overset{\text{def}}{=}\{\xi: |\xi|\leq C(1+t)^{-\f12}  \},$ then we can split the phase space $\R^3$ into two time-dependent regions, $S(t)$ and $S(t)^c.$ Integrating the above equation over $S(t),$ and noting that $\widehat{\rho u}=\hat{u}+\widehat{au},$ and
	\beno
	\widehat{\Delta u}\bar{\hat{u}}=-|\xi|^2 |\hat{u}|^2, \quad \widehat{\na\div u} \bar{\hat{u}}= -|\xi \cdot \hat{u}|^2,
	\eeno
	we can obtain that
	\ben\label{decayeq1}
	\begin{split}
	 &\int_{S(t)} \big(\gamma |\hat{a}(\xi,t)|^2+|\widehat{\rho u}(\xi,t)|^2 \big) \,d\xi +\int_0^t \int_{S(t)} \big( \mu |\xi|^2 |\hat{u}|^2 + (\lam+\mu) |\xi\cdot \hat{u}|^2 \big) \,d\xi ds\\
	&=\int_{S(t)} \big( \gamma|\hat{a}(\xi,0)|^2+|\widehat{\rho u}(\xi,0)|^2 \big) \,d\xi + Re \int_0^t \int_{S(t)} \Big[ -\widehat{\div(\rho u\otimes u)} \cdot\overline{\widehat{\rho u}} \\
	&\qquad+\Big(\mu\widehat{\Delta u}  +(\lam+\mu)\widehat{\na\div u}\Big) \cdot\overline{\widehat{a u}} +   i  (\gamma-1)\xi \widehat{H(\rho|1)}\cdot  \overline{\widehat{\rho u}}\Big]  \,d\xi ds \\
	&\eqdefa \int_{S(t)} \big(\gamma |\hat{a}(\xi,0)|^2+|\widehat{\rho u}(\xi,0)|^2 \big) \,d\xi + B_1+B_2+B_3.
	\end{split}
\een
	From Lemma \ref{energylem1}, we have that $a,$ $u$ and $\rho u$ all belong to $L^\infty((0,+\infty);L^2),$ which means $\widehat{\rho u\otimes u}$ and $\widehat{a u}$ belong to $L^\infty((0,+\infty);L^\infty).$ Thanks to these facts, we can give estimates to the terms $B_i(i=1,2,3)$. We first have
	\ben\label{decayeq2}
	\begin{split}
	|B_1|&\leq \Big| \int_0^t \int_{S(t)}  \widehat{\div(\rho u\otimes u)}\cdot (\overline{\hat{u}}+\overline{\widehat{a u}})\,d\xi ds\Big|\\
	&\leq \eta \int_0^t \int_{S(t)}  \mu |\xi|^2 |\hat{u}|^2 \,d\xi ds  +C_{\eta} \int_0^t \int_{S(t)} \big| \widehat{\rho u\otimes u} \big|^2 \,d\xi ds +\int_0^t \int_{S(t)} |\xi| \big| \widehat{\rho u\otimes u} \big| |\widehat{a u}| \,d\xi ds\\
	 &\leq \eta \int_0^t \int_{S(t)}  \mu |\xi|^2 |\hat{u}|^2 \,d\xi ds +C_{\eta} \int_0^t \|\widehat{\rho u\otimes u}\|_{L^\infty}^2 \int_{S(t)}\,d\xi ds \\
	 &\qquad + C(1+t)^{-\f12}\int_0^t \|\widehat{\rho u\otimes u}\|_{L^\infty} \|\widehat{a u}\|_{L^\infty}\int_{S(t)}\,d\xi ds \\
	 &\leq \eta \int_0^t \int_{S(t)}  \mu |\xi|^2 |\hat{u}|^2 \,d\xi ds+C_{\eta}(1+t)^{-\f32}\int_0^t \|u\|_{L^2}^4 ds + C(1+t)^{-2}\int_0^t \|u\|_{L^2}^3 \|a\|_{L^2}\,ds.
	\end{split}
\een
	Similarly, one has
	\begin{equation}\label{decayeq3}
	\begin{split}
	|B_2|&\leq \eta \int_0^t \int_{S(t)}  \mu |\xi|^2 |\hat{u}|^2 \,d\xi ds +C_{\eta} \int_0^t \int_{S(t)} |\xi|^2 \big| \widehat{a u} \big|^2 \,d\xi ds \\
	&\leq \eta \int_0^t \int_{S(t)}  \mu |\xi|^2 |\hat{u}|^2 \,d\xi ds +C_{\eta}(1+t)^{-\f52}\int_0^t \|u\|_{L^2}^2 \|a\|_{L^2}^2\,ds,
	\end{split}
	\end{equation}
	and
	\beno
	|B_3|&\leq &\eta \int_0^t \int_{S(t)}  \mu |\xi|^2 |\hat{u}|^2 \,d\xi ds +C_{\eta} \int_0^t \int_{S(t)} |\xi|^2 \big| \widehat{a u} \big|^2 \,d\xi ds  + C\int_0^t \int_{S(t)} | \widehat{H(\rho|1)}|^2  \,d\xi ds \\
	&\leq& \eta \int_0^t \int_{S(t)}  \mu |\xi|^2 |\hat{u}|^2 \,d\xi ds +C_{\eta}(1+t)^{-\f52}\int_0^t \|u\|_{L^2}^2 \|a\|_{L^2}^2\,ds +(1+t)^{-\f32} \int_0^t   \| H(\rho|1) \|_{L^1}^2  \, ds \\
&\leq& \eta \int_0^t \int_{S(t)}  \mu |\xi|^2 |\hat{u}|^2 \,d\xi ds +C_{\eta}(1+t)^{-\f52}\int_0^t \|u\|_{L^2}^2 \|a\|_{L^2}^2\,ds +(1+t)^{-\f32} \int_0^t   \| a \|_{L^2}^4  \,  ds 
	\eeno

	Note that $a_0$ and $\rho_0 u_0$ belong to $L^{p_0}(\R^3)$ for $1\leq {p_0}<\f32$. Then for $\f1{p_0}+\f1{p_0'}=1,$ one has
	\begin{equation}\label{decayeq4}
	\begin{split}
	\int_{S(t)} \big( |\hat{a}(\xi,0)|^2+|\widehat{\rho u}(\xi,0)|^2 \big) \,d\xi&\leq  \big(\|\hat{a_0}\|_{L^{p_0'}}^2+ \|\widehat{\rho_0 u_0}\|_{L^{p_0'}}^2\big) \big( \int_{S(t)}  \,d\xi\big)^{1-\f2{p_0'}}\\
	&\leq C\big(\|a_0\|_{L^{p_0}}^2+ \|\rho_0 u_0\|_{L^{p_0}}^2\big)(1+t)^{-2\beta({p_0})}.
	\end{split}
	\end{equation}
	Plugging \eqref{decayeq2}, \eqref{decayeq3} and \eqref{decayeq4} into \eqref{decayeq1}, and choosing $\eta$ small enough, we arrive at  
	\begin{equation*} 
	\begin{split}
	&\int_{S(t)} \big( |\hat{a}(\xi,t)|^2+|\widehat{\rho u}(\xi,t)|^2 \big) \,d\xi +C\int_0^t \int_{S(t)} \big( \mu |\xi|^2 |\hat{u}|^2 + (\lam+\mu) |\xi\cdot \hat{u}|^2 \big) \,d\xi ds\\
	&\leq C\big(\|a_0\|_{L^{p_0}}^2+ \|\rho_0 u_0\|_{L^{p_0}}^2\big)(1+t)^{-2\beta({p_0})}+ C(1+t)^{-\f32}\int_0^t \big( \|u\|_{L^2}^4+ \|a\|_{L^2}^4\big)\,ds.
	\end{split}
	\end{equation*}
It ends the proof to the lemma.
\end{proof}
Now we are in a position to prove 

\begin{prop}\label{decayprop}  Let $0<\alpha<1,$ $\mu>\frac12 \lam,$ and $(\rho,u)$ be a global and smooth solution of $\mbox{(CNS)}$ with initial data $(\rho_0,u_0)$ verifying that $\rho_0\ge c>0,$ the admissible condition \eqref{admissible} and
	$
	\sup_{t\in \R^+} \|\rho(t,\cdot)\|_{C^\alpha }\leq M.
	$
 Suppose that $a_0 \in L^{p_0}(\R^3)\cap H^1(\R^3)$ and $u_0\in L^{p_0}(\R^3)\cap H^2(\R^3)$ with $ p_0\in [1,2]$. Then we have
\begin{equation}\label{decayest2}
\|u(t)\|_{H^1} + \|a(t)\|_{H^1}\leq \bar{C}(1+t)^{-\beta({p_0})},
\end{equation}
where $\beta({p_0})=\f34(\f2{p_0}-1)$ and the constant $\bar{C}$ depends only on $\underline{\rho},$ $\mu,$ $\lam,$ $M,$ $\|a_0\|_{L^{p_0}\cap H^1},$ and $\|u_0\|_{L^{p_0}\cap H^2}.$
\end{prop}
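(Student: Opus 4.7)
The plan is to combine the dissipation inequality from Proposition \ref{energy_prop_main} with the low-frequency control provided by Lemma \ref{conlf}, via the time-frequency splitting method described in the introduction.

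First I would observe that $X(t)\sim\|u\|_{H^1}^2+\|a\|_{H^1}^2+\|\dot u\|_{L^2}^2$ satisfies $\frac{d}{dt}X+A_7 D\le 0$, where $D=\|\nabla^2 u\|_{L^2}^2+\|\nabla u\|_{L^2}^2+\|\nabla a\|_{L^2}^2+\|\nabla\dot u\|_{L^2}^2$. Since the momentum equation together with $\rho\ge\underline\rho$ gives $\|\dot u\|_{L^2}\lesssim \|\nabla a\|_{L^2}+\|\nabla^2 u\|_{L^2}$, every component of $X$ is controlled by $D$ except the $L^2$ parts of $u$ and $a$. A Plancherel splitting at the threshold $|\xi|=R_0(1+t)^{-1/2}$ with $R_0$ to be chosen yields
\begin{equation*}
\|u\|_{L^2}^2+\|a\|_{L^2}^2\le L(t)+R_0^{-2}(1+t)\bigl(\|\nabla u\|_{L^2}^2+\|\nabla a\|_{L^2}^2\bigr),
\end{equation*}
where $L(t):=\int_{|\xi|\le R_0(1+t)^{-1/2}}(|\hat u|^2+|\hat a|^2)\,d\xi$. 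Combining the two estimates gives $X\le C L(t)+(C/R_0^{2})(1+t)D$, so that, after absorbing $D$,
\begin{equation*}
\frac{d}{dt}X+\frac{c}{1+t}X\le\frac{c'}{1+t}L(t),
\end{equation*}
with $c=A_7 R_0^2/C$ which can be made as large as we wish. Integrating against $(1+t)^c$ produces the Gronwall-type bound
\begin{equation*}
X(t)\le(1+t)^{-c}X(0)+c'(1+t)^{-c}\int_0^t(1+s)^{c-1}L(s)\,ds.
\end{equation*}

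Next, I would bound $L(t)$ using Lemma \ref{conlf}. Writing $\hat u=\widehat{\rho u}-\widehat{au}$ and noting that $|S(t)|\lesssim(1+t)^{-3/2}$, one has $\int_{S(t)}|\widehat{au}|^2\,d\xi\lesssim (1+t)^{-3/2}\|a\|_{L^2}^2\|u\|_{L^2}^2$, so Lemma \ref{conlf} yields
\begin{equation*}
L(t)\le C(1+t)^{-2\beta(p_0)}+C(1+t)^{-3/2}\int_0^t\bigl(\|u\|_{L^2}^4+\|a\|_{L^2}^4\bigr)\,ds.
\end{equation*}

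Finally, I would close the estimate by a bootstrap on the decay rate. Starting from the uniform bound $X(t)\le C_0$ from Proposition \ref{energy_prop_main}, the time integral is dominated by $C_0^2(1+t)$, so $L(t)\le C(1+t)^{-\min(2\beta(p_0),1/2)}$; plugging into the Gronwall formula (with $c$ chosen larger than $2\beta(p_0)\le 3/2$) gives $X(t)\le C(1+t)^{-\sigma_1}$ with $\sigma_1=\min(2\beta(p_0),1/2)$. Iterating, each step replaces $\sigma_n$ by $\min(2\beta(p_0),\,1/2+2\sigma_n)$, so after at most two iterations the rate saturates at $2\beta(p_0)$, producing $X(t)\le C(1+t)^{-2\beta(p_0)}$, which is exactly the claimed $\|u\|_{H^1}+\|a\|_{H^1}\le\bar C(1+t)^{-\beta(p_0)}$. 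The main obstacle is quantitative: one must choose the splitting threshold $R_0$ large enough that the Gronwall constant $c$ dominates the target $2\beta(p_0)$, and at borderline cases where $2\sigma_n=1$ produces a logarithmic factor, one must absorb the log either by taking $c$ strictly larger or by a slightly weaker intermediate rate before performing the next iteration step.
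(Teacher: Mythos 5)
Your proof is correct and follows essentially the same time-frequency-splitting plus bootstrap strategy the paper uses: your iteration $\sigma_1,\sigma_2,\sigma_3$ is precisely the paper's Steps 1--3, with Lemma~\ref{conlf} invoked identically to control the low frequencies. One remark on your closing paragraph: increasing $c$ cannot remove the borderline logarithm, since the $\log$ sits in $L(t)$, not in the Gronwall weight, and $\int_0^t(1+s)^{c-5/2}\log(1+s)\,ds\sim(1+t)^{c-3/2}\log(1+t)$ for every $c>3/2$; so of your two suggested fixes only the second one --- running one extra step from the clean intermediate rate $\|u\|_{L^2}+\|a\|_{L^2}\lesssim(1+t)^{-1/2}$, which is exactly the paper's Step~3 --- actually works. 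Also note that the inequality $X\le CL(t)+(C/R_0^{2})(1+t)D$ silently drops a bare $CD$ contribution coming from $\|\nabla a\|_{L^2}^2$ and $\|\dot u\|_{L^2}^2$ (neither can be improved by Plancherel splitting), so it only holds once $1+t\gtrsim R_0^2$; consequently the Gronwall integration should be started at $t_0\sim R_0^2$, which is harmless thanks to the uniform bound $X\le C_0$ on $[0,t_0]$.
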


\begin{proof} We separate the proof into several steps.
	
	{\it Step 1: The first sight of the convergence.} Thanks to \eqref{conlfsol} and the fact that $a$ and $u$ belong to $L^\infty((0,+\infty);L^2),$ we have
\begin{equation}\label{decayeq6}
\begin{split}
&\int_{S(t)} \big( |\hat{a}(\xi,t)|^2+|\widehat{\rho u}(\xi,t)|^2 \big) \,d\xi \\
&\leq C\big(\|a_0\|_{L^{p_0}}^2+ \|\rho_0 u_0\|_{L^{p_0}}^2\big)(1+t)^{-2\beta({p_0})} +C \big(\|u\|_{L^\infty(L^2)}^4+\|a\|_{L^\infty(L^2)}^4\big) (1+t)^{-\f12}\\
&\leq C(1+t)^{-r_m},
\end{split}
\end{equation}
where $r_m=\min\{2\beta({p_0}),\f12\}$. Due to the fact $u=\rho u- a u$, we have
\beno
\int_{S(t)}  |\widehat{  u}(\xi,t)|^2  \,d\xi &\leq& \int_{S(t)}   |\widehat{\rho u}(\xi,t)|^2  \,d\xi+\int_{S(t)} |\widehat{a u}(\xi,t)|^2   \,d\xi\\
&\leq&C(1+t)^{-r_m}+C(1+t)^{-\f32}|\widehat{a u}(\xi,t)|_{L^\infty}^2\\
&\leq&C(1+t)^{-r_m}.
\eeno

Next, because of $\rho \dot u=\mu\Delta u+(\lam+\mu)\na\div u-\na \mathfrak{a}$, following the same argument, we can obtain
\beno
\int_{S(t)}  |\widehat{  \rho \dot {u} }(\xi,t)|^2  \,d\xi \leq  \int_{S(t)}  |\mu\widehat{ \Delta u}+(\lam+\mu)\widehat{\na\div u}-  \big((\gamma-1)\widehat{\na H(\rho|1)}+\gamma \widehat{\na a} \big)(\xi,t)|^2  \,d\xi  \leq  C(1+t)^{-1-r_m},
\eeno
which implies that
$
\int_{S(t)}  |\widehat{   \dot {u} }(\xi,t)|^2  \,d\xi   \leq  C(1+t)^{-1-r_m}.
$

 We recall the dissipation inequality \eqref{energyest1_main}. Then by frequency splitting method, it is not difficult to derive that  
 \beno
\f{d}{dt} X(t)+ \f{K}{1+t} X(t)&\leq& \f{1}{1+t}\int_{S(t)}\big( |\hat{a}(\xi,t)|^2+|\widehat{  u}(\xi,t)|^2+|\widehat{ \dot{u}}(\xi,t)|^2 \big) \,d\xi\\&\le&
  C(1+t)^{-1-r_m},
\eeno
which implies
\begin{equation}\label{informdecay}
X(t)\leq C(1+t)^{-r_m}.
\end{equation}
In particular, we have 
\begin{equation}\label{decayeq10}
\|u\|_{L^2}+ \|a\|_{L^2} \leq 2C(1+t)^{-r_m/2}.
\end{equation}

{\it Step 2: Improving the decay estimate (I).} 
We want to improve the decay estimate if $\beta(p_0)>\f14$. By definition, $r_m=\f12$. Thanks to \eqref{conlfsol} and \eqref{decayeq10}, we improve the estimate for the low frequency part as follows
\beno
\int_{S(t)} \big( |\hat{a}(\xi,t)|^2+|\widehat{\rho u}(\xi,t)|^2 \big) \,d\xi &\leq&  C(1+t)^{-2\beta({p_0})} + C(1+t)^{-\f32}\log(1+t).
\eeno
Now following the simiar argument used in the previous step, we conclude that
\beno \int_{S(t)} \big( |\hat{a}(\xi,t)|^2+|\widehat{u}(\xi,t)|^2+|\widehat{\dot{u}}(\xi,t)|^2 \big) \,d\xi &\leq&  C(1+t)^{-2\beta({p_0})} + C(1+t)^{-\f32}\log(1+t), \eeno
which implies that
\beno
\f{d}{dt} X(t)+ \f{K}{1+t} X(t)\leq C(1+t)^{-1}\big((1+t)^{-2\beta({p_0})} + (1+t)^{-\f32}\log(1+t)\big).
\eeno
We obtain that 
\begin{equation}\label{decayeq11}
X(t)\leq C\min\{(1+t)^{-2\beta({p_0})}, (1+t)^{-\f32}\log(1+t) \} .
\end{equation}
In particular, $\|u\|_{L^2}+ \|a\|_{L^2} \leq \min\{(1+t)^{-\beta({p_0})}, (1+t)^{-\f34}\log^{\f12}(1+t) \}.$

{\it Step 3:  Improving the decay estimate (II). } Finally we deal with the case that $\beta(p_0)>\f12$. By \eqref{decayeq11}, we have 
$\|u\|_{L^2}+ \|a\|_{L^2} \leq C(1+t)^{-\f12}.$ Now
we may repeat the same process in the above to get that
\beno \int_{S(t)} \big( |\hat{a}(\xi,t)|^2+|\widehat{u}(\xi,t)|^2+|\widehat{\dot{u}}(\xi,t)|^2 \big) \,d\xi &\leq&  C(1+t)^{-2\beta({p_0})} + C(1+t)^{-\f32}, \eeno
which implies that
\beno
\f{d}{dt} X(t)+ \f{K}{1+t} X(t)\leq C(1+t)^{-1}(1+t)^{-2\beta({p_0})}.
\eeno
It is enough to derive \eqref{decayest2}. We ends the proof to the proposition.
\end{proof}

\subsection{Proof of Theorem \ref{thm:main1}} Before giving the proof to Theorem \ref{thm:main1}, we first show the propagation of the   regularity for $\na^2a$.
\begin{prop}\label{energyprop1}
Let $(a,u)$ is a solution of $\mbox{(CNS)}$ with initial data $(a_0, u_0).$ Then under the assumptions of  Proposition \ref{decayprop}, it hold $a\in L^\infty((0,+\infty);H^2)\cap L^2((0,+\infty); H^2)$ and  $\na u\in L^2((0,\infty);H^2)$. 
\end{prop}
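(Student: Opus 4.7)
The plan is to derive a closed energy estimate for $\|\na^2\mathfrak{a}\|_{L^2}^2$ that couples, via the elliptic reformulation \eqref{eq:BD}, to the higher-order regularity of $u$. Applying $\na$ to the transport equation \eqref{eq52} produces a damped transport equation of the form
\begin{equation*}
\tfrac{1}{\gamma}\bigl(\pa_t\na^2\mathfrak{a}+u\cdot\na\na^2\mathfrak{a}\bigr)+\tfrac{\rho}{\lam+2\mu}\na^2\mathfrak{a}=-\rho\,\na^2\bigl(\div u-\tfrac{\mathfrak{a}}{\lam+2\mu}\bigr)+R,
\end{equation*}
where $R$ gathers commutator terms such as $\na u\cdot\na^2\mathfrak{a}$, $\na^2 u\cdot\na\mathfrak{a}$, $\mathfrak{a}\,\na^2\div u$, and terms featuring $\na\rho$. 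Testing with $\na^2\mathfrak{a}$, using $\rho\geq\underline\rho$, and integrating the transport term by parts yields
\begin{equation*}
\tfrac{d}{dt}\|\na^2\mathfrak{a}\|_{L^2}^2+\tfrac{c}{\lam+2\mu}\|\na^2\mathfrak{a}\|_{L^2}^2\lesssim\|\na^2(\div u-\tfrac{\mathfrak{a}}{\lam+2\mu})\|_{L^2}^2+\|\na u\|_{L^\infty}\|\na^2\mathfrak{a}\|_{L^2}^2+\mathcal{E},
\end{equation*}
with $\mathcal{E}$ collecting cross terms bounded by products of norms already controlled by the preceding propositions.

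The leading source is handled via the elliptic identity $\Delta G=\div(\rho\dot u)$ for the effective viscous flux $G:=(\lam+2\mu)\div u-\mathfrak{a}$, together with $\mu\Delta\cP u=\cP(\rho\dot u)$ from \eqref{eq:BD}. Standard elliptic regularity gives
\begin{equation*}
\|\na^2 G\|_{L^2}+\|\na^3\cP u\|_{L^2}\lesssim\|\na(\rho\dot u)\|_{L^2}.
\end{equation*}
To establish $\|\na(\rho\dot u)\|_{L^2}^2\in L^1_t$, I would split $\na(\rho\dot u)=\na\rho\cdot\dot u+\rho\na\dot u$. The second piece lies in $L^2_t L^2$ by Proposition \ref{Prop41} and the upper bound on $\rho$. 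For the first, Sobolev embedding and interpolation give
\begin{equation*}
\|\na\rho\cdot\dot u\|_{L^2}^2\lesssim\|\na\rho\|_{L^3}^2\|\na\dot u\|_{L^2}^2\lesssim\|\na\rho\|_{L^2}\|\na\rho\|_{L^6}\|\na\dot u\|_{L^2}^2,
\end{equation*}
which is integrable in time thanks to the uniform $W^{1,6}$ bound on $\mathfrak{a}$ from Proposition \ref{prop_density} (equivalent to one on $\rho$ since $\underline\rho\leq\rho\leq M$) and $\na\dot u\in L^2_t L^2$ from Proposition \ref{Prop41}.

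Gronwall's inequality then closes the estimate. The pre-factor $\|\na u\|_{L^\infty}$ sits in $L^2_t$ by \eqref{eq:blow}, so by Cauchy--Schwarz $\int_s^t\|\na u\|_{L^\infty}d\tau\leq\sqrt{t-s}\,\|\na u\|_{L^2_t L^\infty}$, and the quadratic dissipation dominates uniformly, keeping the integrating factor bounded on every time interval. The commutator terms in $R$ and $\mathcal{E}$, e.g.\ $\|\na^2 u\|_{L^3}\|\na\mathfrak{a}\|_{L^6}\|\na^2\mathfrak{a}\|_{L^2}$, are each controlled by interpolation using only the already-established uniform and $L^2_t$ bounds. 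This delivers $\|\na^2\mathfrak{a}\|_{L^2}\in L^\infty_t\cap L^2_t$; since $\underline\rho\leq\rho\leq M$, the same conclusion transfers to $\na^2 a$. Feeding this back into the elliptic bound $\|\na^3 u\|_{L^2}\lesssim\|\na(\rho\dot u)\|_{L^2}+\|\na^2\mathfrak{a}\|_{L^2}$ finally yields $\na u\in L^2_t H^2$. The delicate point I anticipate is the systematic bookkeeping of the terms in $R$ and $\mathcal{E}$: each commutator must be absorbed into the Gronwall scheme purely by means of the parabolic gains $\na u\in L^2_t L^\infty$, $\na\dot u\in L^2_t L^2$, and the uniform H\"older/$W^{1,6}$ control of $\rho$, which requires a careful choice of Lebesgue exponents at every step.
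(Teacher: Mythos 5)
Your proposal follows essentially the same route as the paper: a damped transport estimate for $\na^2\mathfrak{a}$, with the leading source $\na^2(\div u-\tfrac{1}{\lam+2\mu}\mathfrak{a})$ and $\na^3\cP u$ controlled by $\|\na(\rho\dot u)\|_{L^2}$ via the elliptic system \eqref{eq:BD}, and the splitting $\na(\rho\dot u)=\na\rho\cdot\dot u+\rho\na\dot u$ handled exactly as in \eqref{uh3} using the uniform $L^3$ bound on $\na\rho$ together with $\na\dot u\in L^2_tL^2$ from Proposition \ref{Prop41}. The one place where the paper is cleaner than your sketch is the Gronwall coefficient: rather than arguing that the exponential damping beats the $\sqrt{t-s}$ growth of $\int\|\na u\|_{L^\infty}$, the paper applies Young's inequality to the term $\|\na u\|_{L^\infty}\|\na^2\mathfrak{a}\|_{L^2}^2$ to absorb part of it into the dissipation and leave a coefficient $\|\na u\|_{L^\infty}^2$ (see the $D_4$ estimate and \eqref{aest1}), which is directly in $L^1_t$ by \eqref{eq:blow} — worth spelling out since your Cauchy--Schwarz remark, as stated, does not by itself give a uniform-in-time integrating factor without the intermediate absorption step.
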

\begin{proof} Due to Proposition \ref{energy_prop_main}, we have $\mathfrak{a}\in L^\infty((0,+\infty);H^1)\cap L^2((0,+\infty); \dot{H}^1).$ Thanks to the lower and upper bounds for the density $\rho$, it is not difficult to check that $\|a\|_{H^2}\sim \|\mathfrak{a}\|_{H^2}$. Then the desired result is reduced to the proof of the propagation of $\na^2 \mathfrak{a}$. 
	
We first notice that  by Proposition \ref{prop_density}, $\na \mathfrak{a}\in L^\infty((0,\infty); L^p)$ with $p\in[2,6]$, which will be used frequently in what follows. Recall that 
\begin{equation}\label{aeq}
\f{1}{\gamma}(\mathfrak{a}_t + u\cdot\na \mathfrak{a} )+\mathfrak{a} (\div u-\f1{\mu+2\lam} \mathfrak{a}) +\f1{\mu+2\lam} \mathfrak{a}^2+ (\div u-\f1{\mu+2\lam} \mathfrak{a})+\f1{\mu+2\lam} \mathfrak{a}=0.
\end{equation}
Then it is not difficult to derive that
\beno
\begin{split}
\f1{2}\f{d}{dt} \|\na^2 \mathfrak{a}\|_{L^2}^2 &+ \f1{\mu+2\lam} \|\na^2 \mathfrak{a}\|_{L^2}^2 \leq \Big|\Big( \na^2 (\div u-\f1{\mu+2\lam} \mathfrak{a}), \na^2 \mathfrak{a}\Big) \Big| + \f1{\mu+2\lam}\Big|\Big( \na \mathfrak{a} \na \mathfrak{a}, \na^2 \mathfrak{a}\Big) \Big|\\
&+\Big|\Big( \na^2 [\mathfrak{a}(\div u-\f1{\mu+2\lam} \mathfrak{a})], \na^2 \mathfrak{a}\Big) \Big|+ \Big|\Big( \na^2 (u\cdot \na \mathfrak{a}), \na^2 \mathfrak{a}\Big) \Big|\\
&\overset{\text{def}}{=} D_1+D_2+D_3+D_4.
\end{split}
\eeno
By Cauchy-Schwartz inequality and Proposition \ref{prop_density}, we can estimate $D_i(i=1,2)$ easily by
\beno
D_1&\leq& \|\div u-\f1{\mu+2\lam} \mathfrak{a} \|_{\dot{H}^2} \|\na^2 \mathfrak{a}\|_{L^2} \leq  C_\eta \|\div u-\f1{\mu+2\lam} \mathfrak{a} \|_{\dot{H}^2}^2 +\eta \|\na^2 \mathfrak{a}\|_{L^2}^2,\\
D_2&\leq& C  \|\na \mathfrak{a}\|_{L^2}^{\f12} \|\na \mathfrak{a}\|_{L^6}^{\f32} \|\na^2 \mathfrak{a}\|_{L^2}\leq  \eta \|\na^2 \mathfrak{a}\|_{L^2}^2 + C_\eta \|\na \mathfrak{a}\|_{L^6}^2.
\eeno
For $D_3$, we have
\beno
D_3&\leq& \|\mathfrak{a}\|_{L^\infty} \|\div u-\f1{\mu+2\lam} \mathfrak{a} \|_{\dot{H}^2} \|\na^2 \mathfrak{a}\|_{L^2}+\|\na \mathfrak{a}\|_{L^3} \|\na(\div u-\f1{\mu+2\lam} \mathfrak{a}) \|_{\dot{H}^1} \|\na^2 \mathfrak{a}\|_{L^2}\\
&& +\|\div u-\f1{\mu+2\lam} \mathfrak{a} \|_{L^\infty} \|\na^2 \mathfrak{a}\|_{L^2}^2\\
&\leq &\eta \|\na^2 \mathfrak{a}\|_{L^2}^2 +C (\|\mathfrak{a}\|_{W^{1,6}}^2 +\|\div u-\f1{\mu+2\lam} \mathfrak{a} \|_{W^{1,6}}^2) \|\na^2 \mathfrak{a}\|_{L^2}^2 + C_\eta  \|\div u-\f1{\mu+2\lam} \mathfrak{a} \|_{\dot{H}^2}^2.
\eeno
For $D_4$,  thanks to integration by parts,  we obtain that
\beno
(u\cdot\na \na^2 \mathfrak{a}, \na^2 \mathfrak{a})=-\f12\big((\div u) \na^2 \mathfrak{a}, \na^2 \mathfrak{a}\big)
\eeno
which implies that
\beno
D_4&\leq& \|\na \mathfrak{a}\|_{L^3} \|\na^2 u \|_{L^6} \|\na^2 \mathfrak{a} \|_{L^2}+\|\na u\|_{L^\infty} \|\na^2 \mathfrak{a} \|_{L^2}^2+\|\div u\|_{L^\infty} \|\na^2 \mathfrak{a}\|_{L^2}^2\\
&\leq &\eta \|\na^2 \mathfrak{a}\|_{L^2}^2 +C_\eta ( \|\na \mathfrak{a}\|_{L^3}^2 \|\na^2 u \|_{L^6}^2+\|\na u\|_{L^\infty}^2 \|\na^2 \mathfrak{a} \|_{L^2}^2).
\eeno

\medskip

  Combining all the estimates in the above, we obtain that
\begin{equation}\label{aest1}
\begin{split}
&\f12\f{d}{dt} \|\na^2 \mathfrak{a}\|_{L^2}^2 + \f34 \f1{\mu+2\lam} \|\na^2 \mathfrak{a}\|_{L^2}^2 
\leq C\Big( \|\div u-\f1{\mu+2\lam} \mathfrak{a} \|_{\dot{H}^2}^2+   \|\na^2 u \|_{L^6}^2+\|\na \mathfrak{a}\|_{L^6}^2 \Big)\\&\qquad+ C\big( \| \div u-\f1{\mu+2\lam} \mathfrak{a}  \|_{W^{1,6}}^2 + \|\mathfrak{a}\|_{W^{1,6}}^2+\|\na u\|_{L^\infty}^2 \big)\|\na^2 \mathfrak{a}\|_{L^2}^2.
\end{split}
\end{equation}
 Thanks to \eqref{eq:BD}, we have 
\ben\label{uh3}
\begin{split}
&\|\na \cP u\|_{ \dot{H}^2}+\| \div u-\f1{\mu+2\lam} \mathfrak{a}  \|_{ \dot{H}^2}  \leq \|\rho \dot{u}\|_{ \dot{H}^1}\leq C ( \|\na a \dot{u}\|_{ L^2}+\| \na \dot{u}\|_{ L^2}) \\
&\leq C ( \|\na a \|_{L^3}\|  \dot{u}\|_{ L^6}+\| \na \dot{u}\|_{ L^2})
 \leq  C ( \|\na  \mathfrak{a} \|_{L^3}\|  \dot{u}\|_{ L^6}+\| \na \dot{u}\|_{ L^2}) 
 \leq C \| \na \dot{u}\|_{ L^2},\end{split}\een 
from which together with \eqref{controlofu2} imply that
$$\int_0^\infty \Big( \|\div u-\f1{\mu+2\lam}  \mathfrak{a} \|_{\dot{H}^2}^2+   \|\na^2 u \|_{L^6}^2+\|\na  \mathfrak{a}\|_{L^6}^2 \Big)  dt\le C.$$

By Gronwall's inequality, we have $ \mathfrak{a}\in L^\infty((0,+\infty); \dot{H}^2)\cap L^2((0,+\infty); \dot{H}^2)$, from which together with \eqref{uh3}, we deduce that $\na u\in L^2((0,\infty);H^2)$.  We ends the proof of the proposition.
\end{proof}

Finally, we are in the position to prove Theorem \ref{thm:main1}. 
\begin{proof}[Proof of Theorem \ref{thm:main1}] We first note that the first three results of the theorem are proved by Proposition  \ref{prop_density}-Proposition \ref{energyprop1}.
	
Finally let us give the proof to the fourth result in the theorem. For $q\in [2,4]$, one has
\beno
\|f\|_{\dot{B}^{-1+\f3q}_{q,1}}\leq \|f\|_{\dot{B}^{\f12}_{2,1}} \leq \|f\|_{L^2}^{\f12} \|\na f\|_{L^2}^{\f12}, \qquad \|f\|_{\dot{B}^{\f3q}_{q,1}}\leq \|f\|_{\dot{B}^{\f32}_{2,1}}\leq \|\na f\|_{L^2}^{\f12}\|\na^2 f\|_{L^2}^{\f12},
\eeno
from which together with \eqref{decayest2},  we can easily get \eqref{decayest1} and \eqref{decayest4}. On the other hand, these two estimates imply that there exists a time $t_0,$ such that
\beno
\|u(t_0)\|_{\dot{B}^{-1+\f3q}_{q,1}} +\|a(t_0)\|_{\dot{B}^{-1+\f3q}_{q,1}}+\|a(t_0)\|_{\dot{B}^{\f3q}_{q,1}}\leq \eta,
\eeno
where $\eta$ is sufficiently small. Then by the global well-posedness for $\mbox{(CNS)}$ in \cite{Charve,CMZ}, we obtain that $\na u \in L^1((t_0,\infty); L^\infty)$ which yields \eqref{lip} recalling that $\na u\in L^2((0,\infty);L^\infty)$.
	\end{proof}

\setcounter{equation}{0}
  \section{Global-in-time Stability for $\mbox{(CNS)}$ system}
In this section,   we want to prove Theorem \ref{thm:main2}. The proof will fall into two steps:
\begin{enumerate}
	\item By the local well-posedness for the system $\mbox{(CNS)}$,  we can show that the perturbed solution will remain close to the reference solution for a long time if initially they are close. 
	\item With the convergence result implies that the reference solution is close to the  equilibrium after a long time, we can find a time $t_0$ such that $t_0$ is far away from the initial time, and at this moment the solution is close to the equilibrium. Then it is not difficult to prove the global existence in the perturbation framework.
	\end{enumerate}

 \subsection{Setup of the problem}
Let $(\bar{\rho}, \bar{u})$ be a global smooth solution for the $\mbox{(CNS)}$ with the initial data $(\bar{\rho_0},\bar{u_0}).$ And let $(\rho, u)$ be the solution for the $\mbox{(CNS)}$ associated the initial data $(\rho_0,u_0),$ which satisfies \eqref{assm1}.
We denote 
  $h=\rho-\bar\rho$ and $v=u-\bar u$ which satisfy that error equations as follows
  \beno
  \left\{\begin{array}{l}
\partial_t h+ \div((h+\bar\rho)v)+\div (h \bar u)=0,\\
\pa_t v +v\cdot \na v-\f{1}{\rho}(\mu\Delta v+(\lambda+\mu)\na \div v) +\gamma \rho^{\gamma-2} \na h =\f{G}{\rho},
\end{array}
\right.
  \eeno
  where
  \beno
  -G=h \bar u_t+h v\cdot\na \bar u+h \bar u\cdot\na v+h \bar u\cdot\na \bar u+\bar \rho v\cdot \na \bar u+\bar \rho \bar u\cdot \na v+\gamma(\rho^{\gamma-1}- \overline{\rho}^{\gamma-1})\na \overline{\rho}.
  \eeno

 By slight modification, we rewrite the above system as
 \beno
 (ERR)\left\{\begin{array}{l}
 \partial_t h+ (\bar u+v)\cdot \na h=-(h+\bar \rho)\div v-h\div \bar u,\\
\pa_t v +v\cdot\na v-\mu\div (\f 1\rho \na v)-(\lambda+\mu)\na (\f1\rho\div v )\eqdefa H,
\end{array}
\right.
  \eeno
where
  \beno
  H&=&-\f{1}{\rho}\big[h \bar u_t+h v\cdot\na \bar u+h \bar u\cdot\na v+h \bar u\cdot\na \bar u+\bar \rho v\cdot \na \bar u+\gamma(\rho^{\gamma-1}- \overline{\rho}^{\gamma-1})\na \overline{\rho}\big]+\mu\f{\na h}{\rho^2}\na v+(\mu+\lambda)\f{\na h}{\rho^2}\div v\\
  &&-\gamma (\rho^{\gamma-2} -\bar\rho^{\gamma-2})\na h-\gamma\bar\rho^{\gamma-2}\na h+(\f{\bar \rho \bar u}{  \rho}-\f{\bar \rho \bar u}{\bar \rho}-\mu\f{\na   \rho}{  \rho^2}+\mu\f{\na \bar \rho}{\bar \rho^2})\cdot \na v +(\f{\bar \rho \bar u }{\bar \rho}-\mu\f{\na \bar \rho}{\bar \rho^2})\cdot \na v\\
  &&+(\mu+\lambda)\f{\na \bar\rho}{\bar\rho^2}\div v+(\mu+\lambda)(\f{\na \bar\rho}{ \rho^2}-\f{\na \bar\rho}{\bar\rho^2})\div v.
  \eeno

To catch the dissipation structure of the system, we apply  operators $\cQ$ and $\cP$ to  $v$-equation of $\mbox{(ERR)}$   individually to obtain that
\ben
\begin{split}
&\pa_t \cQ v  +v\cdot \na \cQ v -\mu\div (\f 1\rho \na \cQ v )-(
 \lambda+\mu)\na (\f1\rho \div \cQ v)\label{equ:cQ}\\
&\quad= \mu\div([\cQ, \f{1}{\rho}\na ])v+(
 \lambda+\mu)\na [\cQ,  \f1\rho\div  ]  v  +[\cQ, v\cdot \na] v   +\cQ H ,\\
 &\pa_t \cP v +v\cdot \na \cP v  -\mu\div (\f 1\rho \na \cP v)\label{equ:cP} = \mu\div([\cP, \f{1}{\rho}\na ])v +[\cP, v\cdot \na] v +\cP H.
 \end{split}
 \een

Before proving the stability, we  give the estimate to the term  $H$. We have

  \begin{lem}\label{lem:H}
Let $(\bar{\rho},\bar{u})$ be the smooth solution for $\mbox{(CNS)}$ satisfying \eqref{assm}. There exists a $\eps_0$ such that for any $0<\eps\leq \eps_0$, if
  \beno
  \|h\|_{\wt L^\infty_T(\dot B^{\f12,\f3p}_{2,p})}\leq \eps^{\f12},
  \eeno
 then there holds
  \beno
  \|H \|_{ L^1_T(\dot B^{\f12, \f3p-1}_{2,p})}\leq C_1\big(1+\|h\|_{\wt  L^\infty_T(\dot B^{\f12, \f3p}_{2,p})}+\|\cQ v \|_{\wt  L^\infty_T(\dot B^{  \f12, \f3p-1}_{2,p})}+\|\cP v \|_{\wt  L^\infty_T(\dot B^{  \f3p-1}_{p,1})}\big)  \\
 \times \big(\|h\|_{L^1_T(\dot B^{\f52,\f3p}_{2,p})}+\|\cP v \|_{L^1_T(\dot B^{\f3p+1}_{p,1})}+\|\cQ v \|_{ L^1_T(\dot B^{\f32, \f3p}_{2, p})}\big) ,
    \eeno
where $C_1$ is a positive constant depending only on $\mu,$ $\lam,$ and $C$ in \eqref{assm}.
  \end{lem}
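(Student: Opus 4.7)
The strategy is a term-by-term product estimate of $H$ in the hybrid Besov space $\dot B^{\f12,\f3p-1}_{2,p}$, with two preparatory reductions. First, the smallness hypothesis $\|h\|_{\wt L^\infty_T(\dot B^{\f12,\f3p}_{2,p})}\le \eps^{1/2}$ combined with the embedding $\dot B^{\f12,\f3p}_{2,p}\hookrightarrow L^\infty$ (valid for $2\le p\le 4$) and the $L^\infty$-control of $1/\bar\rho$ built into \eqref{assm} yields a two-sided bound $c_0\le \rho\le C_0$ uniformly on $[0,T]\times\R^3$ provided $\eps_0$ is small enough. Standard composition estimates then give $\|1/\rho\|_{\wt L^\infty_T(\dot B^{\f12,\f3p}_{2,p})}$, $\|1/\rho^2\|_{\wt L^\infty_T(\dot B^{\f12,\f3p}_{2,p})}$, $\|\rho^{\gamma-2}\|_{\wt L^\infty_T(\dot B^{\f12,\f3p}_{2,p})}$, and similar quantities bounded by $1+\|h\|_{\wt L^\infty_T(\dot B^{\f12,\f3p}_{2,p})}$.

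Second, every difference-of-compositions appearing in $H$ should be linearised via the fundamental theorem of calculus, for instance
\[
\rho^{\gamma-1}-\bar\rho^{\gamma-1}=h\!\int_0^1\!(\gamma-1)(\bar\rho+\tau h)^{\gamma-2}d\tau,\qquad \tfrac{1}{\rho}-\tfrac{1}{\bar\rho}=-h\!\int_0^1\!(\bar\rho+\tau h)^{-2}d\tau,
\]
with analogous identities for $\rho^{\gamma-2}-\bar\rho^{\gamma-2}$ and $1/\rho^2-1/\bar\rho^2$. This displays each such difference as the small factor $h$ times a smooth-bounded coefficient living in $\wt L^\infty_T(\dot B^{\f12,\f3p}_{2,p})$ with norm $\lesssim 1+\|h\|_{\wt L^\infty_T(\dot B^{\f12,\f3p}_{2,p})}$. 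After these reductions, every one of the dozen-odd summands of $H$ becomes a product of at most three factors in which exactly one factor carries two extra derivatives and supplies the $L^1_T$-in-time norm on the right.

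Third, I would invoke the Bony paraproduct decomposition and the standard hybrid product laws, chiefly
\[
\|fg\|_{\dot B^{\f12,\f3p-1}_{2,p}}\lesssim \|f\|_{\dot B^{\f12,\f3p}_{2,p}}\|g\|_{\dot B^{\f12,\f3p-1}_{2,p}},
\]
together with the variants involving one factor in $\dot B^{\f3p-1}_{p,1}$ for the $\cP v$ contributions. In each summand, the $L^1_T$ factor is one of $\|h\|_{L^1_T(\dot B^{\f52,\f3p}_{2,p})}$, $\|\cQ v\|_{L^1_T(\dot B^{\f32,\f3p}_{2,p})}$, or $\|\cP v\|_{L^1_T(\dot B^{\f3p+1}_{p,1})}$, matching precisely the three terms in the second parenthesis on the right of the claim. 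The remaining factors either come from the reference solution, controlled by \eqref{assm} and producing the constant ``$1$'' (for instance $\bar\rho^{\gamma-2}\na h$, $\f{\na\bar\rho}{\bar\rho^2}\div v$, $\f{\bar u}{1}\cdot\na v$), or from the perturbation at $\wt L^\infty_T$-level of critical regularity, matching the three perturbation norms in the first parenthesis.

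\textbf{Main obstacle.} The delicate point is the hybrid product law itself: every estimate must be split into low- and high-frequency contributions and the Bony para- and remainder pieces handled separately in each regime, which is exactly what forces the restriction $2\le p\le 4$ (outside this range the high-frequency remainder estimates fail to close simultaneously with the low-frequency ones). A secondary subtlety is the two genuine triple products $\f{\na h}{\rho^2}\na v$ and $\f{\na h}{\rho^2}\div v$, for which I would first bundle $\na h/\rho^2$ as $F(\rho)\na h\in\wt L^\infty_T(\dot B^{-\f12,\f3p-1}_{2,p})$ using the composition bound on $F(\rho)=1/\rho^2$, then pair with $\na v\in L^1_T(\dot B^{\f12,\f3p-1}_{2,p})$, invoking the product law twice.
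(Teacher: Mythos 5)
Your proposal is correct and follows the same route as the paper: linearise the difference-of-composition coefficients via the fundamental theorem of calculus (the paper singles out $\rho^{\gamma-2}-\bar\rho^{\gamma-2}=(\gamma-2)h\int_0^1(\theta\rho+(1-\theta)\bar\rho)^{\gamma-3}\,d\theta$ as the representative case), then bound each resulting product term by term with the composition estimates and the hybrid Besov product laws of Proposition~\ref{prop:pro}. The only detail you leave implicit is how the time-integrability is distributed: the paper first applies the product law symmetrically with both factors in $\wt L^2_T$ and then uses the interpolation $\|h\|^2_{\wt L^2_T(\dot B^{3/2,3/p}_{2,p})}\le\|h\|_{L^1_T(\dot B^{5/2,3/p}_{2,p})}\|h\|_{\wt L^\infty_T(\dot B^{1/2,3/p}_{2,p})}$ to reach the asymmetric $L^1_T\times\wt L^\infty_T$ form, but the direct asymmetric split you sketch is equally admissible.
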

\begin{proof}
We just establish the estimate to the term $\gamma (\rho^{\gamma-2} -\bar\rho^{\gamma-2})\na h,$ all the other term can be estimated similarly by Proposition \ref{prop:pro}.

Note that $\rho^{\gamma-2} -\bar\rho^{\gamma-2} =(\gamma-2) h \int_0^1 (\theta\rho+(1-\theta)\bar\rho)^{\gamma-3}\,d\theta,$ then we have
\beno
\|\gamma (\rho^{\gamma-2} -\bar\rho^{\gamma-2})\na h \|_{L^1_T(\dot B^{\f12, \f3p-1}_{2,p})}\leq C  \|h \na h \|_{L^1_T(\dot B^{\f12, \f3p-1}_{2,p})}.
\eeno
Using Proposition \ref{prop:pro} (b) with $s=\f32,$ $t=\f12,$ $\wt s=\f12,$ $\wt t=\f32,$ $\theta=0$ yields
\beno
\begin{split}
&\sum_{2^j\leq R_0} 2^{\f12 j} \| \dot\Delta_j (h\na h)\|_{L^1_T(L^2)}\\
&\leq C\|h\|_{\wt  L^2_T(\dot B^{\f32, \f3p}_{2,p})}\|\na h\|_{\wt  L^2_T(\dot B^{\f12, \f3p-1}_{2,p})}+C\|\na h\|_{\wt  L^2_T(\dot B^{\f12, \f3p-1}_{2,p})}\|h\|_{\wt  L^2_T(\dot B^{\f32, \f3p}_{2,p})}\leq C\|h\|^2_{\wt  L^2_T(\dot B^{\f32, \f3p}_{2,p})}.
\end{split}
\eeno
On the other hand, from Proposition \ref{prop:pro} (a) with $\sigma=\f3p,$ $\tau=\f3p-1,$ it follows that
\beno
\begin{split}
\sum_{2^j> R_0} 2^{\f3p-1 j} \| \dot\Delta_j (h\na h)\|_{L^1_T(L^p)}\leq C\|h\|_{\wt  L^2_T(\dot B^{\f32, \f3p}_{2,p})}\|\na h\|_{\wt  L^2_T(\dot B^{\f12, \f3p-1}_{2,p})}\leq C\|h\|^2_{\wt  L^2_T(\dot B^{\f32, \f3p}_{2,p})}.
\end{split}
\eeno
Thus, we deduce that 
\beno
\|h \na h \|_{L^1_T(\dot B^{\f12, \f3p-1}_{2,p})}\leq C\|h\|^2_{\wt  L^2_T(\dot B^{\f32, \f3p}_{2,p})}.
\eeno
Noting that the interpolation inequality 
\beno
\|h\|^2_{\wt  L^2_T(\dot B^{\f32, \f3p}_{2,p})}\leq \|h  \|^{\f12}_{L^1_T(\dot B^{\f52, \f3p}_{2,p})} \|h  \|^{\f12}_{\wt L^\infty_T(\dot B^{\f12, \f3p}_{2,p})},
\eeno
we have
\beno
\|h \na h \|_{L^1_T(\dot B^{\f12, \f3p-1}_{2,p})}\leq C \|h  \|_{L^1_T(\dot B^{\f52, \f3p}_{2,p})} \|h  \|_{\wt L^\infty_T(\dot B^{\f12, \f3p}_{2,p})}.
\eeno
  \end{proof}
  
\subsection{Long time existence of $\mbox{(ERR)}$ } 
 We want to prove that if the initial data of  $\mbox{(ERR)}$ is small, then its associated solution will be still small during a long time interval. More precisely, we have the following proposition.
 \begin{prop}\label{prop:ST}
Let $(\bar \rho, \bar u)$ associated with initial data $(\bar \rho_0, \bar u_0)$ be a global solution of $\mbox{(CNS)}$  satisfying \eqref{assm}. Given an $\eps>0,$ if the initial data of $\mbox{(ERR)}$ are determined by the following inequality
  \ben\label{assm2}
 \|(h_0,\cQ v_0)\|^{\ell }_{\dot B^{\f12}_{2,1}}+\|\cQ v_0 \|^{H}_{\dot B^{\f3p-1}_{p,1}}+\|h_0 \|^{H }_{\dot B^{\f3p}_{p,1}}+\| \cP v_0 \|_{\dot B^{\f3p-1}_{p,1}}  \leq \eps,
  \een
  then there exists a constant $\delta$ independent of $\eps,$ such that for any $t\in [0, \delta |\ln \eps|]$, there holds
    \beno
 \|(h,\cQ v)(t)\|^{\ell }_{\dot B^{\f12}_{2,1}}+\|\cQ v (t)\|^{H}_{\dot B^{\f3p-1}_{p,1}}+\|h(t) \|^{H}_{\dot B^{\f3p}_{p,1}}+\| \cP v(t) \|_{\dot B^{\f3p-1}_{p,1}}  \leq \eps^{\f12}.
     \eeno
 \end{prop}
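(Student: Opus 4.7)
The plan is a standard a priori analysis of the error system $\mbox{(ERR)}$ combined with a continuation/bootstrap argument. Define the instant energy
\[
E(t)\eqdefa\|(h,\cQ v)(t)\|^{\ell}_{\dot B^{\f12}_{2,1}}+\|\cQ v(t)\|^{H}_{\dot B^{\f3p-1}_{p,1}}+\|h(t)\|^{H}_{\dot B^{\f3p}_{p,1}}+\|\cP v(t)\|_{\dot B^{\f3p-1}_{p,1}},
\]
and the associated dissipation
\[
D(t)\eqdefa\int_0^t\!\Big(\|(h,\cQ v)\|^{\ell}_{\dot B^{\f52}_{2,1}}+\|\cQ v\|^{H}_{\dot B^{\f3p+1}_{p,1}}+\|h\|^{H}_{\dot B^{\f3p}_{p,1}}+\|\cP v\|_{\dot B^{\f3p+1}_{p,1}}\Big)ds.
\]
The goal is to close an inequality of the shape $E(t)+D(t)\le C\eps+C\!\int_0^t\! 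E(s)\,ds+(\mbox{small nonlinear terms})$ and then iterate Gronwall.

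First I would treat the coupled pair $(h,\cQ v)^{\ell}$ as a linearization of $\mbox{(CNS)}$ around $(\bar\rho,0)$ in $L^2$-based Besov spaces. Following the framework of Danchin--Xu \cite{DX}, the symbol of this system is parabolic in both unknowns after diagonalization, producing the standard $\wt L^\infty_t(\dot B^{1/2}_{2,1})\cap L^1_t(\dot B^{5/2}_{2,1})$ control for both $h^\ell$ and $(\cQ v)^\ell$ in terms of the data plus the low-frequency part of the forcing. At high frequencies I would use Haspot's effective-velocity substitution \cite{Has} to convert the density equation into a damped transport equation, which produces the missing $L^1_t(\dot B^{3/p}_{p,1})$-control of $h^H$ together with the parabolic smoothing of $(\cQ v)^H$ in $L^p$-based Besov norms. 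For $\cP v$ I would apply maximal regularity to the heat equation \eqref{equ:cP} with variable coefficient $\rho^{-1}$; the transport term $v\cdot\na\cP v$ and the commutator $\mu\div([\cP,\rho^{-1}\na])v$ are handled by Bony's decomposition, using that $\rho=\bar\rho+h$ is bounded in $\dot B^{1/2,3/p}_{2,p}$ by \eqref{assm} together with the bootstrap hypothesis $E(t)\le\eps^{1/2}\ll1$.

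Next, all nonlinear terms on the right-hand side ($\cP H$, $\cQ H$, the commutators $[\cP,v\cdot\na]v$, $[\cQ,v\cdot\na]v$, the transport and product sources in the $h$-equation) are estimated via Lemma~\ref{lem:H} and the paraproduct/composition lemmas from the appendix. Absorbing a prefactor that is uniformly bounded by the constant in \eqref{assm}, the outcome is an inequality of the form
\[
E(t)+D(t)\;\le\;C\eps\;+\;C\!\int_0^t\!\bar{\cB}(s)\,E(s)\,ds\;+\;C\bigl(1+E(t)\bigr)E(t)\,D(t),
\]
where $\bar{\cB}(s)$ is controlled by the fixed bound on $(\bar\rho,\bar u)$. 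A continuity argument then closes the proof: let $T^\star$ be maximal with $E(t)\le\eps^{1/2}$ on $[0,T^\star]$. For $\eps$ small the quadratic term on the right is absorbable into $D(t)$ on the left, and Gronwall gives $E(t)\le C\eps\,e^{Ct}$. Since $C\eps\,e^{Ct}<\eps^{1/2}$ whenever $t<\delta|\ln\eps|$ with $\delta=1/(2C)$ (and $\eps$ sufficiently small), the bootstrap bound is strictly improved on $[0,\delta|\ln\eps|]$, which allows continuation to the full interval.

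The main obstacle I expect is the simultaneous $L^2$-low-frequency and $L^p$-high-frequency treatment of the coupled density-velocity block $(h,\cQ v)$ --- the careful Lamé-type diagonalization at low frequency and the effective-flux substitution at high frequency must be adapted to the perturbed equations, whose coefficients depend on $\rho=\bar\rho+h$ rather than on a constant background. A companion technical difficulty is the control of the commutators $[\cQ,\rho^{-1}\na]v$ and $[\cP,\rho^{-1}\na]v$ under only the mixed regularity of $\rho$ allowed by \eqref{assm}, and the bookkeeping needed to ensure that the reference solution $(\bar\rho,\bar u)$ enters the energy inequality only through \emph{linear} prefactors; this last point is essential, since it is what lets Gronwall produce an $e^{Ct}$ growth and hence a lifespan of order $|\ln\eps|$, rather than the shorter lifespan that a super-exponential growth would give.
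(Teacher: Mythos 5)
Your proposal reaches the correct conclusion and ends with the same Gronwall/continuity step as the paper, but the route you take through the coupled $(h,\cQ v)$ block is genuinely different, and it is worth noting that it is more elaborate than what is actually needed. You propose to diagonalize the low-frequency hyperbolic--parabolic system and to invoke Haspot's effective-velocity substitution at high frequencies so as to extract $L^1_t$-in-time smoothing for $h$. The paper does neither: it treats the $h$-equation as a pure transport equation (no dissipation in $\rho$ at all), obtains only $\wt L^\infty_t(\dot B^{\f12,\f3p}_{2,p})$-control of $h$ via the estimates \eqref{est:aH}--\eqref{est:al}, and uses $L^p$ energy estimates for the variable-coefficient operators $\div(\rho^{-1}\na\cdot)$ acting on $\cP v$ and $\cQ v$ (via Lemma A.5/A.6 of \cite{Dan-CPDE}) to get the parabolic gain only on the velocity. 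The key observation you miss --- and the reason the paper's route is both simpler and sharper --- is that for a \emph{finite} lifespan of order $|\ln\eps|$ one does not need any dissipation for $h$ at all; an $L^\infty_t$-bound with exponential growth is enough, since the only job of the estimates is to feed a Gronwall inequality of the form $E(t)\le C\eps+C\int_0^tE(s)\,ds$ and hence the bound $E(t)\le C\eps e^{Ct}$.

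The machinery you propose would, if carried out, also prove the result, but it introduces real technical obstacles that the paper entirely sidesteps. The Lam\'e diagonalization and effective-flux substitution are designed for linearization around a \emph{constant} state; here the background is $(\bar\rho,\bar u)$, so the ``constant'' multiplying $\mathfrak{a}$ in the definition of the effective velocity would have to be replaced by the non-constant $\bar\rho$, and the resulting commutators and variable coefficients would then need to be controlled under the mere regularity guaranteed by \eqref{assm}. You flag this as a difficulty in your last paragraph but do not resolve it; the paper's transport-only treatment of $h$ simply avoids it. A second, smaller point: with the explicit choice $\delta=1/(2C)$ the inequality $C\eps e^{Ct}<\eps^{1/2}$ only holds on $[0,\delta|\ln\eps|]$ if $C<1$; in general $\delta$ must be taken a bit smaller and one must use that $\eps$ is small --- the paper simply asserts ``for a suitable $\delta$ independent of $\eps$'' without pinning down $\delta=1/(2C)$.
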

\begin{rmk}
The assumption \eqref{assm2} comes directly from \eqref{assm1}.
\end{rmk}
 \begin{proof}
We use the continuity argument to prove the desired result. Let $\cT$ be the maximum time such that for any $t \in [0, \cT]$, there holds
    \beno
 \|(h,\cQ v)(t)\|^{\ell }_{\dot B^{\f12}_{2,1}}+\|\cQ v(t) \|^{H}_{\dot B^{\f3p-1}_{p,1}}+\|h (t)\|^{H }_{\dot B^{\f3p}_{p,1}}+\| \cP v (t)\|_{\dot B^{\f3p-1}_{p,1}}  \leq \eps^{\f12}.
     \eeno
The existence of $\cT$ can be obtained by the local well-posedness for the system. Then the proof of Proposition \ref{prop:ST} is reduced to prove that $\cT\geq \delta |\ln \eps|$ where $\delta>0$ is a constant independent of $\eps$.

{\it Step 1: Estimates for transport equation.}  Recalling the equation of $h$:
 \beno
 \pa_t h +(v+\bar u)\cdot \na h +v\cdot \na \bar\rho+h \div \bar u +(h+\bar \rho)\div v=0,
 \eeno
acting $\ddj$ to its both side and multiplying $|\ddj h|^{p-2}\ddj h$,  we get
 \beno
 \pa_t \|\ddj h\|_{L^p}^p -\int \div (v+\bar u) |\ddj h|^p\,dx&\leq& C \int| [\ddj ,(v+\bar u)\cdot \na ] h \cdot |\ddj h|^{p-1}| \,dx\\
 &&+C\int |\ddj( (h+\bar \rho)\div v+h\div \bar u) |\cdot |\ddj h|^{p-1}|\,dx,
 \eeno
 which implies that
 \beno
 \pa_t \|\ddj h\|_{L^p}
 &\leq& C(\|\div \bar u\|_{L^\infty}+\|\div v\|_{L^\infty})\|\ddj h\|_{L^p}+C\| [\ddj ,(v+\bar u)\cdot \na ] h\|_{L^p}\\
 &&+\|\ddj( (h+\bar \rho)\div v+h\div \bar u) |\|_{L^p}.
 \eeno
 Thus, by the definition the Besov space, for any $t \in [0, \cT]$, we have
  \ben\label{est:aH}
  \begin{split}
 \|h\|^H_{\wt L_t^\infty(\dot B^{\f 3p}_{p,1})}
 \leq&  \|h_0\|^H_{ \dot B^{\f 3p}_{p,1}}+  \int^t_0 \| \div \bar u\|_{L^\infty} \|h\|^H_{ \dot B^{\f 3p}_{p,1} }\,d\tau+   \int^t_0 \|h\|^H_{ \dot B^{\f 3p}_{p,1} }\| v \|_{B^{\f52, \f 3p+1}_{2,p}}\,d\tau \\
 &  +C\int^t_0 \sum_{2^j\geq R_0}2^{\f{3j}{p}} \| [\ddj , \bar u\cdot \na ] h\|_{L^p}\,d\tau+C\int^t_0 \sum_{2^j\geq R_0}2^{\f{3j}{p}}  \| [\ddj ,v\cdot \na ] h\|_{L^p}\,d\tau \\
 &+C\int^t_0\|(h+\bar \rho)\div v+h\div \bar u\|^H_{\dot B^{\f 3p}_{p,1}}\,d\tau,\end{split}
 \een
 and
   \ben\label{est:al}
   \begin{split}
 \|h\|^{\ell}_{\wt L_t^\infty(\dot B^{\f 12}_{2,1})}
 \leq&  \|h_0\|^{\ell}_{ \dot B^{\f 12}_{2,1}}+  \int^t_0 \|\div \bar u\|_{L^\infty} \|h\|^{\ell}_{ \dot B^{\f 12}_{2,1} }\,d\tau+   \int^t_0 \|h\|^{\ell}_{ \dot B^{\f 12}_{2,1}}\|  v \|_{\dot B^{\f52,\f 3p+1}_{2, p}}\,d\tau\\
 &  +C\int^t_0 \sum_{2^j \leq R_0}2^{\f{j}2} \| [\ddj , \bar u\cdot \na ] h\|_{L^2}\,d\tau+C\int^t_0 \sum_{2^j \leq R_0}2^{\f{j}2}\| [\ddj ,v\cdot \na ] h\|_{L^2}\,d\tau\\
 &+C\int^t_0\|(h+\bar \rho)\div v+h\div \bar u\|^{\ell}_{\dot B^{\f 12}_{2,1}}\,d\tau.\end{split}
 \een

Let us give the estimates to the terms in the righthand side of \eqref{est:aH} and  \eqref{est:al}.  By Proposition \ref{prop:com}, the commutators can be estimated as follows:
 \ben\label{est:acH}
 \begin{split}
& \int^t_0 \sum_{2^j\geq R_0}2^{\f{3j}{p}} \| [\ddj , \bar u\cdot \na ] h\|_{L^p}\,d\tau +\int^t_0 \sum_{2^j\geq R_0}2^{\f{3j}{p}}  \| [\ddj ,v\cdot \na ] h\|_{L^p}\,d\tau \\
  &\leq  C  \int^t_0 \bigg(\| \cQ \bar u\|_{\dot B^{ \f52,\f 3p+1}_{2, p}}+\| \cP \bar u\|_{\dot B^{ \f 3p+1}_{  p,1}} +\|\cQ v\|_{\dot B^{ \f52,\f 3p+1}_{2, p}}  +\|\cP v\|_{\dot B^{  \f 3p+1}_{ p,1 }}\bigg )\|h\|_{ \dot B^{\f12,\f 3p}_{2,p} }\,d\tau,
  \end{split}
 \een
 and
  \ben\label{est:acl}
  \begin{split}
 &\int^t_0 \sum_{2^j\leq R_0}2^{\f{j}{2}} \| [\ddj , \bar u\cdot \na ] h\|_{L^2}\,d\tau +\int^t_0 \sum_{2^j\leq R_0}2^{\f{j}{2}}  \| [\ddj ,v\cdot \na ] h\|_{L^2} \,d\tau \\
  &\leq  C  \int^t_0 \bigg (\| \cQ \bar u\|_{\dot B^{ \f52,\f 3p+1}_{2, p}}+\| \cP \bar u\|_{\dot B^{ \f 3p+1}_{  p,1}} +\|\cQ v\|_{\dot B^{ \f52,\f 3p+1}_{2, p}}  +\|\cP v\|_{\dot B^{  \f 3p+1}_{ p,1 }} \bigg)\|h\|_{ \dot B^{\f12,\f 3p}_{2,p} }\,d\tau.
  \end{split}
   \een
 Also, by the product estimates (Proposition \ref{prop:pro}), we can get
  \ben\label{est:ap}
  \begin{split}
 &\int^t_0\|(h+\bar \rho)\div v+h\div \bar u\|^{\ell}_{\dot B^{\f 12}_{2,1}}\,d\tau+\int^t_0\|(h+\bar \rho)\div v+h\div \bar u\|^H_{\dot B^{\f 3p}_{p,1}}\,d\tau \\
 &\leq C \int^t_0  \bigg (\| \cQ \bar u\|_{\dot B^{ \f52,\f 3p+1}_{2, p}}+\| \cP \bar u\|_{\dot B^{ \f 3p+1}_{  p,1}} +\|\cQ v\|_{\dot B^{ \f52,\f 3p+1}_{2, p}}  +\|\cP v\|_{\dot B^{  \f 3p+1}_{ p,1 }} \bigg) \|h\|_{ \dot B^{\f12,\f 3p}_{2,p} }\,d\tau \\
 &\quad+ C \int^t_0 \bigg( \|h\|_{ \dot B^{\f12,\f 3p}_{2,p} }+\|\bar \rho-1\|_{ \dot B^{\f12,\f 3p}_{2,p} }+1\bigg)\| \div v \|_{\dot B^{\f32, \f 3p}_{2,p}}\,d\tau.
 \end{split}
 \een

Plugging \eqref{est:acH}, \eqref{est:acl} and \eqref{est:ap} into \eqref{est:aH} and \eqref{est:al}, we  obtain that
 \ben\label{est:as}
 \|h\|_{\wt L_t^\infty(\dot B^{\f12, \f 3p}_{2, p})}
& \leq& \|h_0\|_{ \dot B^{\f12, \f 3p}_{2, p}}+C  \int^t_0 \bigg (\| \cQ \bar u\|_{\dot B^{ \f52,\f 3p+1}_{2, p}}+\| \cP \bar u\|_{\dot B^{ \f 3p+1}_{  p,1}} +\|\cQ v\|_{\dot B^{ \f52,\f 3p+1}_{2, p}}  +\|\cP v\|_{\dot B^{  \f 3p+1}_{ p,1 }} \bigg)\|h\|_{ \dot B^{\f12,\f 3p}_{2,p} }\,d\tau \notag\\
&&+ C \int^t_0 \bigg( \|h\|_{ \dot B^{\f12,\f 3p}_{2,p} }+\|\bar \rho-1\|_{ \dot B^{\f12,\f 3p}_{2,p} }+1\bigg)\| \cQ v \|_{\dot B^{\f52, \f 3p+1}_{2,p}}\,d\tau,  
\een
for any $t \in [0, \cT].$

 \medskip

 {\it Step 2: Estimates of momentum equation.} First, we deal with the compressible part of velocity.  Applying the operator $\ddj$ to the equation \eqref{equ:cQ} and multiplying $|\ddj \cQ v|^{p-2}\ddj \cQ v$, we get that
 \beno
 &&\f1p \f{d}{dt} \|\ddj \cQ v\|_{L^p}^p-\mu \int \div(\f1\rho \na \ddj \cQ v)|\ddj\cQ  v|^{p-2}\ddj \cQ v\,dx\\
 &&\quad -( \lambda+\mu)\int \na (\f1\rho \div \ddj \cQ v)|\ddj\cQ  v|^{p-2}\ddj \cQ v\,dx\\
 &&\quad=\int\ddj\cQ  H |\ddj \cQ v|^{p-2}\ddj \cQ v\,dx-\int  \ddj \cQ (v\cdot\na v) \cdot |\ddj \cQ v|^{p-2}\ddj \cQ v\,dx\\
 &&\quad\quad+C\int  \big\{\mu\div([\cQ\ddj, \f{1}{\rho}\na ])v+(
 \lambda+\mu)\na [\cQ\ddj,  \f1\rho \div]   v  \big\}|\ddj \cQ v|^{p-1}\,dx. 
 \eeno
By  Lemma A.5 and Lemma A.6 in \cite{Dan-CPDE}, we have
 \beno
 && \f{d}{dt} \|\ddj\cQ  v\|_{L^p}^p+c_p2^{2j} \|\ddj \cQ v\|_{L^p}^p \\
 &&\qquad\leq C  \int |\ddj \cQ H  | \cdot|\ddj v|^{p-1}\,dx + C \int | \ddj\cQ (v\cdot\na v )| \cdot |\ddj \cQ v |^{p-1} \,dx \\
 &&\qquad+C\int \Big|\mu\div([\cQ\ddj, \f{1}{\rho}\na ])v+(
 \lambda+\mu)\na [\cQ\ddj,  \f1\rho \div]   v\Big| \cdot |\ddj \cQ v|^{p-1}\,dx. 
 \eeno
 Thus, by the definition of Besov space, for any $t \in [0, \cT]$, we obtain that
  \ben\label{est:cQ1}
  \begin{split}
& \|\cQ v\|_{\wt L^\infty_t(\dot B^{\f12, \f 3p-1}_{2, p })}+\|\cQ v\|_{L^1_t(\dot B^{\f52 ,\f 3p+1}_{2, p })}\leq C \|\cQ v_0\|_{ \dot B^{\f12, \f 3p-1}_{2, p }} +C  \|v\cdot\na v\|_{L^1_t(\dot B^{\f12, \f 3p-1}_{2,p })}+C\|\cQ H\|_{L^1_t(\dot B^{\f12, \f 3p-1}_{2,p })}\\
&\qquad\quad +C\int_0^t \sum_{2^j\leq R_0}2^{\f{j}2}\| [\ddj\cQ , \f{1}{\rho}\na] v\|_{ L^2} \,d\tau+C\int_0^t \sum_{2^j\geq R_0}2^{j\f3p }\| [\ddj\cQ , \f{1}{\rho}\na] v\|_{ L^p}\,d\tau .\end{split}
 \een

 For the commutators,  applying Proposition \ref{prop:com} and Lemma \ref{Lem:com-est}  yields that
 \ben\label{est:cQ2}
 \begin{split}
 & \int_0^t\bigg(\sum_{2^j\leq R_0}2^{\f{j}2}\| [\ddj\cQ , \f{1}{\rho}\na] v\|_{ L^2} + \sum_{2^j\geq R_0}2^{\f{3j}p}\| [\ddj\cQ , \f{1}{\rho}\na] v\|_{ L^p}\bigg)\,d\tau\\
   \leq&
  C\int_0^t\bigg(\sum_{2^j\leq R_0}2^{\f{j}2} \| [\ddj\cQ, (\f{1}{\bar\rho}-1)\na] \cP v \|_{L^2}+\sum_{2^j\leq R_0}2^{\f{j}2} \| [\ddj\cQ, \f{h}{\rho\bar\rho}\na]\cP v \|_{L^2}\bigg)\,d\tau\\
  &+C\int_0^t\bigg(\sum_{2^j\geq R_0}2^{j(\f3p-1)} \| [\ddj\cQ, (\f{1}{\bar\rho}-1)\na] \cP v \|_{L^p}+\sum_{2^j\geq R_0}2^{ \f{3j}p} \| [\ddj\cQ, \f{h}{\rho\bar\rho}\na]\cP v \|_{L^p}\bigg)\,d\tau\\
&+
  C\int_0^t\bigg(\sum_{2^j\leq R_0}2^{\f{j}2} \| [\ddj\cQ, (\f{1}{\bar\rho}-1)\na]\cQ v \|_{L^2}+\sum_{2^j\leq R_0}2^{\f{j}2} \| [\ddj\cQ, \f{h}{\rho\bar\rho}\na] \cQ v \|_{L^2}\bigg)\,d\tau\\
  &+C\int_0^t\bigg(\sum_{2^j\geq R_0}2^{j(\f3p-1)} \| [\ddj\cQ, (\f{1}{\bar\rho}-1)\na] \cQ v \|_{L^p}+\sum_{2^j\geq R_0}2^{ \f{3j}p} \| [\ddj\cQ, \f{h}{\rho\bar\rho}\na] \cQ v \|_{L^p}\bigg)\,d\tau\\
  \leq & C \int_0^t\bigg(\|\bar \rho-1\|_{\dot B^{\f3p}_{p,1}\cap\dot B^{\f3p+1}_{p,1}} +\|h\|_{\dot B^{\f52,\f3p}_{2,p}}\bigg)\bigg(\|\cQ v\|_{\dot B^{\f12, \f3p-1}_{2,p}}+\|\cP v\|_{\dot B^{  \f3p-1}_{p,1}}\bigg)\,d\tau\\
  &+ \eps^\f12 \bigg(\|\cQ v\|_{ L^1_t(\dot B^{\f52, \f3p+1}_{2,p})}+\|\cP v\|_{ L^1_t(\dot B^{  \f3p+1}_{p,1})}\bigg).\end{split}
 \een
 Using Proposition \ref{prop:pro}, we can get that
 \ben\label{est:cQ3}
 \|v\cdot\na v\|_{ L^1_t(\dot B^{\f12, \f 3p-1}_{2,p })} \leq C\int_0^t \big( \|\cP v\|_{ \dot B^{\f 3p-1 }_{p,1} }+\|\cQ v\|_{ \dot B^{\f12, \f 3p-1 }_{2, p}}\big)\big( \|\cP v\|_{ \dot B^{\f 3p+1 }_{p,1} }+\|\cQ v\|_{\dot B^{\f52, \f 3p+1 }_{2, p}}\big)\,d\tau.
 \een
 
 Plugging the estimates \eqref{est:cQ2}, \eqref{est:cQ3} and the estimates of $H$ (Lemma \ref{lem:H}) into \eqref{est:cQ1}, and noting that
\beno
\|h\|_{\dot B^{\f52,\f3p}_{2,p}}\leq C\|h\|_{\dot B^{\f12,\f3p}_{2,p}},
\eeno 
  we obtain that
\ben\label{est:cQ}
\begin{split}
 \|\cQ v\|_{\wt L^\infty_t(\dot B^{\f12, \f 3p-1}_{2, p  })}+\|\cQ v\|_{L^1_t(\dot B^{\f52, \f 3p+1}_{ 2,p  })}&\leq C\|\cQ v_0\|_{\dot B^{\f12, \f3p-1}_{2,p }}+\eps^{\f12}\bigg(\|\cQ v\|_{L^1_t(\dot B^{\f52, \f 3p+1}_{ 2,p  })} +\|\cP v\|_{L^1_t(\dot B^{  \f 3p+1}_{p,1  })}\bigg)  \\
 &\quad + C\int_0^t\bigg(\|\cP v\|_{   \dot B^{ \f 3p-1}_{ p,1 } }+\|\cQ v\|_{   \dot B^{ \f12, \f 3p-1}_{ 2,p } }+\|h\|_{   \dot B^{ \f12, \f 3p}_{ 2,p } }
 \bigg)\,d\tau,
 \end{split}
 \een
for any $t \in [0, \cT].$
\bigskip

Next, we establish the estimates of incompressible part, $\cP v$. Applying the operator $\ddj$ to both sides of \eqref{equ:cP} and multiplying $|\ddj \cP v|^{p-2}\ddj \cP v$, we get that
 \beno
 &&\f1p \pa_t \|\ddj \cP v\|_{L^p}^p-\mu\int \div(\f1\rho \na \ddj \cP v)|\ddj\cP  v|^{p-2}\ddj \cP v\,dx\\
 &&\qquad=\int  \ddj\cP  H |\ddj \cP v|^{p-2}\ddj \cP v\,dx-\int  \ddj \cP (v\cdot\na v) \cdot |\ddj \cP v|^{p-2}\ddj \cP v\,dx\\
 &&\qquad\quad +\int  \mu\div([\cP\ddj, \f{1}{\rho}\na ])v  
 \cdot |\ddj \cP v|^{p-2}\ddj \cP v\,dx,
 \eeno
which implies that
 \beno
 && \pa_t \|\ddj\cP  v\|_{L^p}^p+c_p2^{2j} \|\ddj \cP v\|_{L^p}^p \\
 &&\qquad\leq C\bigg( \int |\ddj \cP H | |\ddj \cP v|^{p-1} \,dx+  \int | \ddj\cP (v\cdot\na v )| \cdot |\ddj \cP v |^{p-1} \,dx\bigg)\\
 &&\qquad+\mu\int |\div([\cP\ddj, \f{1}{\rho}\na ])v | \cdot |\ddj \cP v|^{p-1}\,dx. \eeno
 Thus, by the definition of Besov space, for any $t \in [0, \cT]$, we derive that
  \ben\label{est:cP1}
  \begin{split}
 \|\cP v\|_{\wt L^\infty_t(\dot B^{ \f 3p-1}_{ p,1 })}+\|\cP v\|_{L^1_t(\dot B^{ \f 3p+1}_{ p,1 })}
 \leq& C \|\cP v_0\|_{ \dot B^{  \f 3p-1}_{ p,1 }} +C  \|v\cdot\na v\|_{L^1_t(\dot B^{  \f 3p-1}_{ p,1 })}+C\|\cP H\|_{L^1_t(\dot B^{  \f 3p-1}_{ p,1 })}\\
& +C\int_0^t \sum_{j\in\mathbb N}2^{j\f3p}\| [\ddj\cP , \f{1}{\rho}\na] v\|_{ L^2} \,d\tau.
\end{split}
 \een
By the same argument as the proof of inequality \eqref{est:cQ2}, we have
 \ben\label{est:cP2}
 \begin{split}
 \int_0^t\bigg(\sum_{j\in\mathbb N} 2^{ \f{3j}p  }\| [\ddj\cP , \f{1}{\rho}\na] v\|_{L^p}\bigg)\,dx\leq&C\bigg(\|\cQ v\|_{  L^1_t(\dot B^{\f12, \f3p-1}_{2,p})}+\|\cP v\|_{  L^1_t(\dot B^{  \f3p-1}_{p,1})}\bigg)\\
   & +\eps^{\f12}\bigg(\|\cQ v\|_{   L^1_t(\dot B^{\f52, \f3p+1}_{2,p} )}+\|\cP v\|_{   L^1_t(\dot B^{  \f3p+1}_{p,1})}\bigg).
   \end{split}
 \een
 Plugging the estimates \eqref{est:cP2} and \eqref{est:cQ3} into \eqref{est:cP1} will imply that for any $t \in [0, \cT],$ there holds
\ben\label{est:cP}
\begin{split}
 \|\cP v\|_{\wt L^\infty_t(\dot B^{ \f 3p-1}_{ p,1 })}+\|\cP v\|_{L^1_t(\dot B^{ \f 3p+1}_{ p,1 })}
 \leq& C\|\cP v_0\|_{\dot B^{\f3p-1}_{p,1}}+ C\int_0^t\bigg(\|\cP v\|_{   \dot B^{ \f 3p-1}_{ p,1 } }+\|\cQ v\|_{   \dot B^{ \f12, \f 3p-1}_{ 2,p } }+\|h\|_{   \dot B^{ \f12, \f 3p}_{ 2,p } }
 \bigg)\,d\tau \\
 &+\eps^{\f12}\bigg(\|\cQ v\|_{L^1_t(\dot B^{\f52, \f 3p+1}_{ 2,p  })} +\|\cP v\|_{L^1_t(\dot B^{  \f 3p+1}_{p,1  })}\bigg).
 \end{split}
 \een

{\it Step 3: Closing the energy estimate.}
Combining the estimates \eqref{est:as}, \eqref{est:cQ} and \eqref{est:cP}, and choosing a suitable $\delta_1$ such that $\delta_1C \leq \f{1}{2}$,  we obtain that
\beno
\begin{split}
\delta_1 \|h\|_{\widetilde{L}^\infty_t(\dot{B}^{\f12, \f3p}_{2,p})} +\|\cQ v\|_{\widetilde{L}^\infty_t(\dot B^{\f12, \f3p-1}_{2, p })}+&\|\cP v\|_{\widetilde{L}^\infty_t(\dot B^{  \f3p-1}_{  p,1 })}\leq C\bigg( \|h_0\|_{ \dot B^{\f12, \f 3p}_{p,1}}+ \|\cQ v_0\|_{ \dot B^{\f12, \f 3p-1}_{2, p}}+ \|\cP v_0\|_{ \dot B^{\f 3p-1}_{p,1}}\bigg)\\
&\qquad\qquad+C\int^t_0 \bigg(\delta_1 \|h\|_{ \dot{B}^{\f12, \f3p}_{2,p} } +\|\cQ v\|_{ \dot B^{\f12, \f3p-1}_{2, p } }+\|\cP v\|_{ \dot B^{ \f3p-1}_{ p,1 } }
\bigg)\,d\tau,
\end{split} \eeno
for any $t \in [0, \cT].$
By the Gronwall's inequality, we get that for any $t \in [0, \cT]$, there holds
\beno
\delta_1 \|h\|_{\widetilde{L}^\infty_t(\dot{B}^{\f12, \f3p}_{2,p})} +\|\cQ v\|_{\widetilde{L}^\infty_t(\dot B^{\f12, \f3p-1}_{2, p })}+\|\cP v\|_{\widetilde{L}^\infty_t(\dot B^{  \f3p-1}_{  p,1 })}\leq  C\bigg( \|h_0\|_{ \dot B^{\f12, \f 3p}_{p,1}}+ \|\cQ v_0\|_{ \dot B^{\f12, \f 3p-1}_{2, p}}+ \|\cP v_0\|_{ \dot B^{\f 3p-1}_{p,1}}\bigg) e^t.
\eeno
According to  the definition of $\cT$, this implies that  $\cT\geq \delta |\ln \eps|$ for a suitable $\delta$ independent of $\eps.$ Then the proof is completed.
 \end{proof}

\subsection{Proof of Theorem \ref{thm:main2}.}

 Now we are in a position to prove Theorem \ref{thm:main2}. 
 \begin{proof}[Proof of Theorem \ref{thm:main2}]
 First, thanks to Theorem \ref{thm:main1}, we can choose $t_0=\f12(1+|\delta\ln\eps|)$ such that
  \beno \|(\bar{\rho}-1)(t_0)\|_{\dot{B}^{\f12,\f32}_{2,2}}+\|\bar{u}(t_0)\|_{\dot{B}^{\f12}_{2,1}}\lesssim  (1+|\delta\ln\eps|)^{-\beta(p_0)/2}. \eeno
Recall that $\rho-1=h+(\bar{\rho}-1),$ $u=v+\bar{u},$ then from Proposition \ref{prop:ST},  we derive that
\beno
\begin{split}
\|(\rho-1,\cQ u )(t_0)\|^{\ell}_{\dot B^{\f12}_{2,1}}&+\|\cQ u  (t_0)\|^{H}_{\dot B^{\f3p-1}_{p,1}}+\|(\rho-1)(t_0) \|^{H}_{\dot B^{\f3p}_{p,1}}+\|(\cP u)(t_0)\|_{\dot B^{\f3p-1}_{p,1}}\\
&\lesssim \eps^{\f12}+ (1+|\delta\ln\eps|)^{-\beta(p_0)/2}\lesssim (1+|\delta\ln\eps|)^{-\beta(p_0)/2}.
\end{split}
\eeno
This means that at $t_0$, the system $\mbox{(CNS)}$ is in the close-to-equilibrium regime. Then thanks to the results in  \cite{CMZ,MN1,MN2}, we obtain the global existence for $(\rho-1,u)$. Moreover due to the definition of $\cT$ and (\ref{decayest1}-\ref{decayest4}), we conclude that
  \beno
  \delta_1\|h\|_{\widetilde{L}^\infty_t(\dot{B}^{\f12, \f3p}_{2,p})} +\|\cQ v\|_{\widetilde{L}^\infty_t(\dot B^{\f12, \f3p-1}_{2, p })}+\|\cP v\|_{\widetilde{L}^\infty_t(\dot B^{  \f3p-1}_{  p,1 })}\lesssim \min\{(1+|\delta\ln\eps|)^{-\beta(p_0)/2}, (1+t)^{-\beta(p_0)/2}+\epsilon\}.
  \eeno
It completes the proof to  Theorem \ref{thm:main2}.
 \end{proof}

\setcounter{equation}{0}
 \section{Construction of a global solutions with a class of large initial data}
  Inspired by \cite{PZ}, in this section, we will construct a global solution for compressible Navier-Stokes equations with the vertical component of the initial data $(\cP u_0)^3$ could be arbitrarily large.  
  
  \subsection{Reduction of the problem}
Given $a_0\in \dot{B}^{\f12,\f3p-1}_{2,p}$, $\cQ u_0\in
\dot{B}^{\f12,\f3p-1}_{2,p}$ and $\cP u_0\in \dot{B}^{\f3p-1}_{p,1}$ with $\|a_0\|_{\dot{B}^{\f12,\f3p-1}_{2,p}}$ being sufficiently small, it follows by a similar argument as that in \cite{CMZ} that there exists a positive time $T$ so that $\mbox{(CNS)}$ has a unique solution $(a,u)$ with
\begin{equation}\label{local}
\begin{split}
&a\in\cC((0,T]; \dot{B}^{\f12,\f3p-1}_{2,p}),\qquad \cQ u\in L^\infty((0,T];\dot{B}^{\f12,\f3p-1}_{2,p})\cap L^1((0,T);\dot{B}^{\f52,\f3p+1}_{2,p}),\\
&\cP u\in L^\infty((0,T];\dot{B}^{\f3p-1}_{p,1})\cap L^1((0,T);\dot{B}^{\f3p+1}_{p,1}).
\end{split}
\end{equation}

Next we only need to give the {\it a priori} estimates to the solution. Observe that  the system $\mbox{(CNS)}$ can be recast as
\ben\label{neweq}
 \left\{\begin{array}{l}
\partial_t a+u\cdot \na a +\div \cQ u=-a \div  u,\\
\pa_t \cQ u +u\cdot \na \cQ u-\mu\Delta \cQ u-(\mu+\lambda)\na \div \cQ u+\gamma\na a= \cQ W_\cQ,\\
\pa_t (\cP u)^h+u\cdot \na (\cP u)^h -\Delta  (\cP u)^h = [\cP, u\cdot\na ]u^h +(\cP W_\cP)^h,\\
 \pa_t (\cP u)^3+\cP(u\cdot \na u^3 )-\Delta   (\cP u)^3 =  (\cP W_\cP)^3,
\end{array}
\right.
\een
where
 \beno
  W_\cQ&=&-\f{\na a} {(1+a)^2}\na u-\f{a\na a}{1+a}  +\mu\div ((\f1{1+a}-1)\na u)\\
 && +(\mu+\lambda)\na ((\f1{1+a}-1)\div u)+[\cQ ,u\cdot\na] u+\gamma(\rho^{\gamma-1}-1)\na a,
  \eeno
and
 \beno
 W_\cP=
-\f{\na a} {(1+a)^2}\na u   +\mu \div ((\f1{1+a}-1)\na u).
  \eeno
Now, to establish the uniform estimates for the solution, we first give estimates to $\cQ W_\cQ$ and $\cP W_\cP.$
\begin{lem}\label{lem:G1}
There exists a universal constant $C$ such that
\beno
\| \cQ W_\cQ\|_{\dot B^{\f12, \f3p}_{2,p}}\leq  C (\|\cQ u\| _{\dot B^{\f52,\f3p+1}_{2,p}}+\|\cP u\|_{\dot B^{\f3p+1}_{p,1}} +\|a\|_{\dot B^{\f52,\f3p}_{2,p}})   (\|a\|_{\dot B^{\f12,\f3p }_{2,p}}  +\|\cQ u\| _{\dot B^{\f12,\f3p-1}_{2,p}}+\|(\cP u)^h\|_{\dot B^{\f3p-1}_{p,1}}),
\eeno
and
\beno
\|\cP W_\cP\|_{ \dot B^{\f3p-1}_{p,1} }
 \leq C\|a\|_{B^{\f12,\f3p}_{2,p}}  (\|\cQ u\|_{B^{\f52,\f3p+1}_{2,p}}+\|\cP u\|_{ \dot B^{\f3p+1}_{p,1} })+C\|a\|_{B^{\f52,\f3p}_{2,p}}  (\|\cQ u\|_{B^{\f12,\f3p-1}_{2,p}}+\|\cP u\|_{ \dot B^{\f3p-1}_{p,1} }).
\eeno
\end{lem}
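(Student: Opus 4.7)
The plan is to exploit the boundedness of the Riesz projectors $\cP$ and $\cQ$ on the hybrid Besov spaces $\dot B^{\f12,\f3p}_{2,p}$ and $\dot B^{\f3p-1}_{p,1}$, which reduces the problem to estimating $W_\cQ$ and $W_\cP$ termwise in those same spaces. For every composition factor of the form $F(a)=\f{1}{(1+a)^n}-1$ or $F(a)=\rho^{\gamma-1}-1$ I will invoke the standard composition estimate for Besov spaces under the smallness hypothesis $\|a\|_{\dot B^{\f3p}_{p,1}}\ll1$ (which is implicit in the working norm via the embedding $\dot B^{\f3p}_{p,1}\hookrightarrow L^\infty$) to conclude $\|F(a)\|_{\dot B^{s,t}_{r,p}}\lesssim \|a\|_{\dot B^{s,t}_{r,p}}$ for the relevant pairs $(s,t;r)$. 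Then each term of $W_\cQ$ and $W_\cP$ becomes a bilinear expression in $a$ and $\nabla u$ (with one derivative on each of $a$ and $u$, or two derivatives on $u$), to which I apply the product estimates (Proposition \ref{prop:pro}) in the two-frequency-regime form stated in the excerpt.

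The key bookkeeping is to distribute derivatives in a way that one factor sits in the low-regularity ``energy-like'' space $\dot B^{\f12,\f3p-1}_{2,p}$ or $\dot B^{\f3p-1}_{p,1}$ and the other in the high-regularity ``dissipative'' space $\dot B^{\f52,\f3p+1}_{2,p}$, $\dot B^{\f52,\f3p}_{2,p}$, or $\dot B^{\f3p+1}_{p,1}$. For instance, for the term $\f{\nabla a}{(1+a)^2}\nabla u$ in $W_\cQ$, I split $\f{1}{(1+a)^2}=1+(\f{1}{(1+a)^2}-1)$ and bound the contribution in $\dot B^{\f12,\f3p}_{2,p}$ by $\|\nabla a\|_{\dot B^{\f12,\f3p-1}_{2,p}}\|\nabla u\|_{\dot B^{\f32,\f3p}_{2,p}}$ plus a cubic term absorbed via composition --- giving exactly the structure on the right-hand side after rewriting $\|\nabla a\|_{\dot B^{\f12,\f3p-1}_{2,p}}\sim\|a\|_{\dot B^{\f32,\f3p}_{2,p}}\lesssim\|a\|_{\dot B^{\f12,\f3p}_{2,p}}+\|a\|_{\dot B^{\f52,\f3p}_{2,p}}$ interpolated with the symmetric split. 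The terms $\f{a\nabla a}{1+a}$ and $(\rho^{\gamma-1}-1)\nabla a$ are treated identically. For the two divergence/gradient terms involving $(\f{1}{1+a}-1)$, the derivative landing outside contributes one power of frequency, and Proposition \ref{prop:pro} again yields one factor in each of the low- and high-regularity norms.

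The commutator $[\cQ,u\cdot\nabla]u$ is the only term requiring a different tool: I invoke Proposition \ref{prop:com} separately on the low- and high-frequency blocks to obtain
$$\|[\cQ,u\cdot\nabla]u\|_{\dot B^{\f12,\f3p}_{2,p}}\lesssim \bigl(\|\cQ u\|_{\dot B^{\f52,\f3p+1}_{2,p}}+\|\cP u\|_{\dot B^{\f3p+1}_{p,1}}\bigr)\bigl(\|\cQ u\|_{\dot B^{\f12,\f3p-1}_{2,p}}+\|(\cP u)^h\|_{\dot B^{\f3p-1}_{p,1}}+\|(\cP u)^3\|_{\dot B^{\f3p-1}_{p,1}}\bigr),$$
but since our overall claim actually packages the $\|(\cP u)^h\|_{\dot B^{\f3p-1}_{p,1}}$ factor only (and not $\|(\cP u)^3\|$), I will arrange the commutator decomposition in the manner of \cite{PZ} so that one of the two $u$-factors is necessarily $\cQ u$ or $(\cP u)^h$: indeed writing $\sum_{j,k}\ddj(S_{k-1}u\cdot\nabla\dot\Delta_k u)$ and peeling off the diagonal, a careful bookkeeping shows that whenever the factor carrying few derivatives is $(\cP u)^3$, its vertical derivative $\pa_3(\cP u)^3=-\div_h(\cP u)^h$ can be traded for $(\cP u)^h$, eliminating the $(\cP u)^3$ contribution.

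For $\cP W_\cP$, the estimate is simpler: only the two terms involving $(1+a)$ appear, and applying the same product/composition scheme in $\dot B^{\f3p-1}_{p,1}$ yields the stated bound directly. The main obstacle I anticipate is the commutator term in $W_\cQ$, specifically arranging the paraproduct decomposition so that the $(\cP u)^3$ component never appears as the low-derivative factor; this is exactly the algebraic observation from \cite{PZ} that the authors have alluded to in the introduction, and without it the smallness condition \eqref{eq:smalldata1} (which allows $\|(\cP u_0)^3\|_{\dot B^{\f3p-1}_{p,1}}$ to be arbitrarily large) could not be accommodated.
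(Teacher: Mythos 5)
You have the right idea and essentially the same approach as the paper: the only delicate term in $W_\cQ$ is the commutator $[\cQ,u\cdot\na]u$, and the quadratic-in-$(\cP u)^3$ contribution is eliminated via the divergence-free identity $\pa_3(\cP u)^3=-\div_h(\cP u)^h$ from \cite{PZ}, with the remaining pieces handled by the product/composition estimates. The paper's bookkeeping is simpler than the paraproduct peeling you sketch: after writing $u=\cQ u+\cP u$ inside the commutator, the one problematic piece is $\cQ\bigl((\cP u)\cdot\na(\cP u)^3\bigr)$, which is recast as $\cQ\bigl((\cP u)^h\cdot\na_h(\cP u)^3\bigr)-[\cQ,(\cP u)^3]\div_h(\cP u)^h$, so that every term has a $\cQ u$ or $(\cP u)^h$ factor available for the low-regularity slot. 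One imprecision to flag: the substitution $\pa_3(\cP u)^3=-\div_h(\cP u)^h$ is applied to the \emph{differentiated} argument, not to "the factor carrying few derivatives"; afterwards the residual undifferentiated $(\cP u)^3$ in $(\cP u)^3\div_h(\cP u)^h$ is absorbed through the commutator gain for $[\cQ,(\cP u)^3]$ (i.e.\ placed in the high-regularity slot), while $\div_h(\cP u)^h$ sits in the low slot.
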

\begin{proof}
 For $\cQ W_\cQ$ and $\cP W_\cP,$ the most difficult term is $[\cQ, u\cdot\na ]u,$ and the others can be estimated by the Proposition \ref{prop:pro}--Propostion \ref{prop:comp} directly. So we just focus on the term $[\cQ, u\cdot\na ]u.$

Because of $u=\cQ u+\cP u,$ thus,
\beno
[\cQ, u\cdot\na ]u&=&[\cQ, (\cQ u+\cP u)\cdot\na ](\cQ u+\cP u)\\
&=&[\cQ, (\cQ u)\cdot\na ]\cQ u+[\cQ, (\cQ u)\cdot\na ]\cP u+[\cQ, (\cP u)\cdot\na ]\cQ u+[\cQ, (\cP u)\cdot\na ]\cP u.
\eeno
Here, by Proposition \ref{prop:pro}--Proposition \ref{prop:comp}, it is not difficult to get that
\beno
&&\|[\cQ, (\cQ u)\cdot\na ]\cQ u+[\cQ, (\cQ u)\cdot\na ]\cP u +[\cQ, (\cP u)\cdot\na ]\cQ u+[\cQ, (\cP u)\cdot\na ](\cP u)^h\|^{\ell}_{\dot B^{\f12}_{2,1}} \\
&\leq& C (\|\cQ u\| _{\dot B^{\f52,\f3p+1}_{2,p}}+\|\cP u\|_{\dot B^{\f3p+1}_{p,1}} )  (\|\cQ u\| _{\dot B^{\f12,\f3p-1}_{2,p}}+\|(\cP u)^h\|_{\dot B^{\f3p-1}_{p,1}}).
\eeno
For the remainder term $[\cQ, (\cP u)\cdot\na ](\cP u)^3,$ by using $\div \cP u=0$, we can obtain 
\beno
[\cQ, (\cP u)\cdot\na ](\cP u)^3= \cQ ((\cP u)\cdot\na (\cP u)^3)&=&\cQ\big((\cP u)^h\cdot\na_h (\cP u)^3-(\cP u)^3 \div_h (\cP u)^h\big)\\
&=&\cQ \big((\cP u)^h\cdot\na_h (\cP u)^3\big)-[\cQ, (\cP u)^3   ]\div_h (\cP u)^h.
\eeno
Thus, by Proposition \ref{prop:pro}--Propostion \ref{prop:comp}, we complete the proof of this lemma.
\end{proof}
 
\subsection{Proof of Theorem  \ref{thm:main3}} We finally give the proof to   Theorem  \ref{thm:main3}.

\begin{proof}[Proof of Theorem  \ref{thm:main3}]The proof of the theorem falls into four steps.
 
{\it Step 1: Estimates for the low frequency part of the solution.}  Applying $\ddj$ on the both side of the first equation of \eqref{neweq} and multiplying $ \ddj a$, we have
 \beno
 &&\f1p\f{d}{dt} \|\ddj a\|_{L^2}^2 -\int \div  \cQ u  |\ddj a|^2\,dx+\int \ddj \div \cQ u\cdot \ddj a\,dx \\
 &&\leq C \int| [\ddj ,u\cdot \na ] a \cdot  \ddj a|\,dx   +C\int |\ddj( a\div\cQ  u )  \cdot   \ddj a  |\,dx,
 \eeno
 which implies that
 \ben\label{est:alow}
 \begin{split}
 &\f12\f{d}{dt} \|\ddj a\|^2_{L^2}+\int \ddj \div \cQ u\cdot \ddj a\,dx\\
 &\leq C \|\div \cQ  u\|_{L^\infty}\|\ddj a\|^2_{L^2}+C\big(\| [\ddj ,u\cdot \na ] a\|_{L^2}+\|\ddj(a\div \cQ u )  \|_{L^2}\big) \|\ddj a\|_{L^2}.\end{split}
 \een

Next, taking $\Lambda^{-1} \div$ on the both sides of the second equation of \eqref{neweq} yields that
  \ben\label{equ:dlow}
  \pa_t d+u\cdot \na d -(2\mu+\lambda)\Delta d-\gamma\Lambda a=\Lambda^{-1} \div \cQ W_\cQ -[\Lambda^{-1} \div, u\cdot \na] \cQ u,
  \een
  where $d=\Lambda^{-1} \div \cQ u$.

Acting the operator $\ddj$ on the both side of \eqref{equ:dlow} and multiplying $ \ddj d$, we get that
 \ben\label{est:dlow}
 \begin{split}
&\f12 \f{d}{dt} \|\ddj d\|^2_{L^2}+(2\mu+\lambda)\|\na \ddj d\|_{L^2}^2-\gamma\int \ddj a\cdot   \ddj \Lambda d\,dx\\
& \leq C\|\div \cQ u\|_{L^\infty}\|\ddj d\|^2_{L^2}+\|\ddj d\|_{L^2}\|[\ddj, u\cdot \na] d\|_{L^2}\\
&\quad +\|\ddj d\|_{L^2}\big(\|\ddj \cQ W_\cQ\|_{L^2}+\|\ddj [\Lambda^{-1} \div, u\cdot \na] \cQ u\|_{L^2}\big).\end{split}
 \een

Now, we introduce a new auxiliary function $w\overset{def}{=}(2\mu+\lambda)\Lambda a-d,$ which satisfies that
\ben\label{equ:auxF}
\begin{split}
&\pa_t w+u\cdot\na w+\gamma\Lambda a\\
=&-(2\mu+\lambda)[\Lambda, u\cdot\na ] a-(2\mu+\lambda)\Lambda(a\div u)-\Lambda^{-1} \div \cQ W_\cQ +[\Lambda^{-1} \div, u\cdot \na] \cQ u.
\end{split}
\een
Applying $\ddj$ on the both side of \eqref{equ:auxF} and multiplying $ \ddj w$, we  obtain that
  \ben\label{est:auxF}
  \begin{split}
  &\f12\f{d}{dt} \|\ddj w\|_{L^2}^2+(2\mu+\lambda)\gamma\|\ddj \Lambda a\|_{L^2}^2-\gamma\int \ddj a \cdot \ddj \Lambda d \,dx \\
  &\leq C\bigg(\|\div u\|_{L^\infty}\|\ddj w\|_{L^2}^2+\|[\ddj, u\cdot \na]w\|_{L^2}\|\ddj w\|_{L^2} \bigg)\\
  &\quad+C\bigg(\|\ddj [\Lambda, u\cdot\na ] a\|_{L^2}+\|\ddj \Lambda(a\div u)\|_{L^2}+\|\ddj  \cQ W_\cQ\|_{L^2}\bigg) \|\ddj w\|_{L^2}\\
  &\quad+C\|\ddj [\Lambda^{-1} \div, u\cdot \na] \cQ u\|_{L^2}\|\ddj w\|_{L^2}.
\end{split}  \een

Putting together the estimates \eqref{est:alow}, \eqref{est:dlow} and \eqref{est:auxF}, we arrive at 
\ben\label{est:a-d-aux1}
\begin{split}
& \f{d}{dt}\bigg(\gamma\|\ddj a\|_{L^2}^2+(1-\delta)\|\ddj d\|^2_{L^2}+\delta \|\ddj w\|_{L^2}^2  \bigg)+ \|\na \ddj d\|_{L^2}^2+\delta \|\Lambda \ddj a\|_{L^2}^2   \\
&\leq C \|\div \cQ  u\|_{L^\infty}\|\ddj a\|^2_{L^2}+C\bigg(\| [\ddj ,u\cdot \na ] a\|_{L^2}+\|\ddj(a\div \cQ u ) |\|_{L^2}\bigg) \|\ddj a\|_{L^2}\\
&\quad+ C\|\ddj d\|_{L^2}\bigg(\|\div \cQ u\|_{L^\infty}\|\ddj d\|_{L^2}+\|[\ddj, u\cdot \na] d\|_{L^2}\bigg)\\
&\quad+C\bigg(\|\ddj \cQ W_\cQ\|_{L^2}+\| \ddj[\Lambda^{-1} \div, u\cdot \na] \cQ u\|_{L^2}\bigg)\|\ddj d\|_{L^2}\\
&\quad+C\delta\bigg (\|\div u\|_{L^\infty}\|\ddj w\|_{L^2}^2 +\|[\ddj,u\cdot \na]w\|_{L^2}\|\ddj w\|_{L^2} \bigg)\\
&\quad+C\delta\bigg (\|\ddj [\Lambda, u\cdot\na ] a\|_{L^2}+\|\ddj \Lambda(a\div u)\|_{L^2}+\|\ddj   \cQ W_\cQ\|_{L^2} \bigg) \|\ddj w\|_{L^2}\\
 &\quad+C\delta  \|\ddj [\Lambda^{-1} \div, u\cdot \na] \cQ u\|_{L^2}\|\ddj w\|_{L^2}  .\end{split}
\een
When $2^j\leq R_0$, it holds that
$
\|\Lambda  \ddj a\|_{L^2}\leq R_0 \|   \ddj a\|_{L^2}.
$
Thus, we could find a $\delta>0$ (small enough) such that 
\beno
\|\ddj a\|_{L^2}^2+(1-\delta)\|\ddj d\|^2_{L^2}+\delta \|\ddj w\|_{L^2}^2\geq \f1C( \|\ddj a\|_{L^2}^2+ \|\ddj d\|^2_{L^2} ).
\eeno
Integrating \eqref{est:a-d-aux1} over $[0, T]$, we get that
\beno
&& \|\ddj a\|_{L^2}+\|\ddj d\|_{L^2} +2^{2j }\int^T_0(  \|  \ddj d\|_{L^2}+ \delta\|  \ddj a\|_{L^2}  )\,dt\\
&&\leq C \int^T_0\|\div \cQ  u\|_{L^\infty}\|\ddj a\|_{L^2}\,dt+C\int^T_0\bigg(\| [\ddj ,u\cdot \na ] a\|_{L^2}+\|\ddj(a\div \cQ u )  \|_{L^2}\bigg)\,dt\\
&&\quad+ C\int^T_0\|\div \cQ u\|_{L^\infty}\|\ddj d\|_{L^2}\,dt+C\int^T_0 \bigg(\|[\ddj, u\cdot \na] d\|_{L^2}+\|\ddj \cQ W_\cQ\|_{L^2}\bigg)\,dt \\
&&\quad+C\int^T_0 \bigg(  \| \ddj[\Lambda, u\cdot \na ] a \|_{L^2}  +\|\ddj [\Lambda^{-1} \div, u\cdot \na] \cQ u\|_{L^2}+\|[\ddj, u\cdot \na] \Lambda a \|_{L^2}\bigg)\,dt  ,
\eeno
where $C$ depends on the $\mu,\lambda$ and $R_0$. By the definition of Besov space, we obtain
\beno
&& \| a\|^{\ell}_{\dot B^{\f12}_{2,1}}+\|  d\|^{\ell}_{\dot B^{\f12}_{2,1}} + \int^T_0(  \|   d\|^{\ell}_{\dot B^{\f52}_{2,1}}+\delta \|  a\|^{\ell}_{\dot B^{\f52}_{2,1}} )\,dt\\
&&\leq C \int^T_0\|\div \cQ  u\|_{L^\infty}\| a\|^{\ell}_{\dot B^{\f12}_{2,1}}\,dt+C\int^T_0\bigg(\sum_{2^j\leq R_0} 2^{\f{j}2}\| [\ddj ,u\cdot \na ] a\|_{L^2}+\| a\div \cQ u   \|^{\ell}_{\dot B^{\f12}_{2,1}} \bigg)\,dt\\
&&\quad+ C\int^T_0\|\div \cQ u\|_{L^\infty}\| d\|^{\ell}_{\dot B^{\f12}_{2,1}}\,dt+C\int^T_0 \bigg(\sum_{2^j\leq R_0} 2^{\f{j}2} \|[\ddj, u\cdot \na] d\|_{L^2}+\| \cQ W_\cQ\|^{\ell}_{\dot B^{\f12}_{2,1}}\bigg)\,dt\\
&&\quad+C\int^T_0 \sum_{2^j\leq R_0} 2^{\f{j}2} \|\ddj [\Lambda^{-1} \div, u\cdot \na] \cQ u\|_{L^2} \,dt  +C\int^T_0 \sum_{2^j\leq R_0} 2^{\f{j}2}   \| \ddj [\Lambda, u\cdot \na ]a \|_{L^2} \,dt\\
&&\quad+C\int^T_0 \sum_{2^j\leq R_0} 2^{\f{j}2}   \|[\ddj, u\cdot \na]  \Lambda a \|_{L^2}\,dt.
\eeno

 Let us give the estimates to the terms in the righthand side.
Due to Proposition \ref{prop:com} and Lemma \ref{Lem:com-est}, we  deduce that
\ben\label{est:com-low1}
\begin{split}
\sum_{2^j\leq R_0} 2^{\f{j}2}\| [\ddj ,u\cdot \na ] a\|_{L^2}&\leq \sum_{2^j\leq R_0} 2^{\f{j}2}\| [\ddj ,\cP u\cdot \na ] a\|_{L^2}+\sum_{2^j\leq R_0} 2^{\f{j}2}\| [\ddj ,\cQ u\cdot \na ] a\|_{L^2}\\
&\leq C  (\|\cQ u\| _{\dot B^{\f52,\f3p+1}_{2,p}}+\|\cP u\|_{\dot B^{\f3p+1}_{p,1}}) \|a\|_{\dot B^{\f12,\f 3p }_{2,p}}.
\end{split}
\een
The similar argument yields that
\beno
&&\sum_{2^j\leq R_0} 2^{\f{j}2} \|[\ddj, u\cdot \na] d\|_{L^2}\leq C (\|\cQ u\| _{\dot B^{\f52,\f3p+1}_{2,p}}+\|\cP u\|_{\dot B^{\f3p+1}_{p,1}})  \|d\| _{\dot B^{\f12,\f3p-1}_{2,p}} ,\\
&&\sum_{2^j\leq R_0} 2^{\f{j}2} \|\ddj [\Lambda^{-1} \div, u\cdot \na] \cQ u\|_{L^2} \leq C (\|\cQ u\| _{\dot B^{\f52,\f3p+1}_{2,p}}+\|\cP u\|_{\dot B^{\f3p+1}_{p,1}})   \|d\| _{\dot B^{\f12,\f3p-1}_{2,p}} ,\\
&&\sum_{2^j\leq R_0} 2^{\f{j}2}  \|\ddj [\Lambda, u\cdot \na ] a \|_{L^2}\leq  C (\|\cQ u\| _{\dot B^{\f52,\f3p+1}_{2,p}}+\|\cP u\|_{\dot B^{\f3p+1}_{p,1}})   \|a\| _{\dot B^{\f12,\f3p }_{2,p}} .
\eeno
Finally by  Proposition \ref{prop:pro}, we  have
\beno
\| a\div \cQ u\|^{\ell}_{\dot B^{\f12}_{2,1}}\leq C \|\cQ u\| _{\dot B^{\f52,\f3p+1}_{2,p}} \|a\|_{\dot B^{\f12,\f 3p }_{2,p}}.
\eeno

\smallskip

Now putting all the estimates and applying Lemma \ref{lem:G1}, we have
\ben\label{est:a1}
\begin{split}
&\quad\qquad \| a\|^{\ell}_{\dot B^{\f12}_{2,1}}+\|  d\|^{\ell}_{\dot B^{\f12}_{2,1}} + \int^T_0(  \|   d\|^{\ell}_{\dot B^{\f52}_{2,1}}+  \|  a\|^{\ell}_{\dot B^{\f52}_{2,1}} )\,dt \leq \| a_0\|^{\ell}_{\dot B^{\f12}_{2,1}}+\|  \cQ u_0\|^{\ell}_{\dot B^{\f12}_{2,1}}  \\
& \qquad\qquad+C \int_0^T \bigg(\|\cQ u\| _{\dot B^{\f52,\f3p+1}_{2,p}}+\|\cP u\|_{\dot B^{\f3p+1}_{p,1}}+\|a\|_{\dot B^{\f52,\f3p}_{2,p}}\bigg) (\|a\|_{\dot B^{\f12,\f 3p }_{2,p}}  +\|\cQ u\| _{\dot B^{\f12,\f3p-1}_{2,p}}+\|(\cP u)^h\|_{\dot B^{\f3p-1}_{p,1}}\bigg)\,dt.\end{split}
\een

\medskip

{\it Step 2: Estimates for the high frequency part of the solution.}
 Applying $\ddj$ on the both side of equation of $d,$ \eqref{equ:dlow}, and taking the inner product with $ |\ddj d|^{p-2}\ddj d$ , we derive that
 \beno
&&\f1p \f{d}{dt}\|\ddj d\|^p_{L^p}-(2\mu+\lambda)\int \Delta \ddj d \cdot |\ddj d|^{p-2}\ddj d\,dx  -\gamma\int \ddj \Lambda a\cdot    |\ddj d|^{p-2}\ddj d\,dx\\
&& \leq C\|\div \cQ u\|_{L^\infty}\|\ddj d\|^p_{L^p}+\|\ddj d\|^{p-1}_{L^p}\|[\ddj, u\cdot \na] d\|_{L^p}\nonumber\\
&&\quad +\|\ddj d\|^{p-1}_{L^p}(\|\ddj \cQ W_\cQ\|_{L^2}+\|\ddj [\Lambda^{-1} \div, u\cdot \na] \cQ u\|_{L^p}).\nonumber
 \eeno
 By using  Lemma A.5 and A.6 in \cite{Dan-CPDE}, we have that
  \ben\label{est:h1}
  \begin{split}
&\quad\f1p \f{d}{dt} \|\ddj d\|^p_{L^p}+(c_p 2^{2j}-1)\|\ddj d\|^p_{L^p}    \leq C \|\div \cQ u\|_{L^\infty}\|\ddj d\|^p_{L^p}+\|\ddj d\|^{p-1}_{L^p}\|[\ddj, u\cdot \na] d\|_{L^p}\\
&\quad\qquad +\|\ddj d\|^{p-1}_{L^p}(\|\ddj \cQ W_\cQ\|_{L^p}+\|\ddj [\Lambda^{-1} \div, u\cdot \na] \cQ u\|_{L^p}+\|\ddj w\|_{L^p}),
\end{split}
 \een
where $c_p$ is a positive constant depending only on $p.$

By the same argument applied to \eqref{equ:auxF}, we get that
  \ben\label{est:h2}
  \begin{split}
  & \f{d}{dt}\|\ddj w\|_{L^p}^p+\|\ddj w\|_{L^p}^p \\
  &\leq C\|\ddj d\|_{L^p}\|\ddj   w\|^{p-1}_{L^p}+ C\|\div u\|_{L^\infty}\|\ddj w\|_{L^p}^p +C\|[\ddj, u\cdot \na]w\|_{L^p}\|\ddj w\|^{p-1}_{L^p} \\
  &\quad+C\bigg(\|\ddj [\Lambda, u\cdot\na ] a\|_{L^p}+\|\ddj \Lambda(a\div u)\|_{L^p}+\|\ddj \Lambda^{-1} \div \cQ W_\cQ\|_{L^p}\bigg) \|\ddj w\|^{p-1}_{L^p}\\
  &\quad+C\|\ddj [\Lambda^{-1} \div, u\cdot \na] \cQ u\|_{L^p}\|\ddj w\|^{p-1}_{L^p}.\end{split}
  \een

Thanks to \eqref{est:h1} and \eqref{est:h2}, we have
\ben\label{est:h3}
\begin{split}
&\f1p \f{d}{dt}\bigg( \|\ddj d\|^p_{L^p}+\delta \|\ddj w\|_{L^p}^p\bigg)+(c_p 2^{2j}-2  )\|\ddj d\|^p_{L^p} + \delta \|\ddj w\|_{L^p}^p \\
& \leq 2\|\ddj d\|_{L^p}\|\ddj   w\|^{p-1}_{L^p}+ C \|\div \cQ u\|_{L^\infty}\|\ddj d\|^p_{L^p}+\|\ddj d\|^{p-1}_{L^p}\|[\ddj, u\cdot \na] d\|_{L^p} \\
&\quad +\|\ddj d\|^{p-1}_{L^p}\bigg(\|\ddj \cQ W_\cQ\|_{L^2}+\|\ddj [\Lambda^{-1} \div, u\cdot \na] \cQ u\|_{L^p} \bigg) +  C\|\div u\|_{L^\infty}\|\ddj w\|_{L^p}^p\\
&\quad  +C\bigg(\|[\ddj, u\cdot \na]w\|_{L^p}+\|\ddj [\Lambda^{-1} \div, u\cdot \na] \cQ u\|_{L^p}\bigg)\|\ddj w\|^{p-1}_{L^p} \\
  &\quad+C\bigg(\|\ddj [\Lambda, u\cdot\na ] a\|_{L^p}+\|\ddj \Lambda(a\div u)\|_{L^p}+\|\ddj \Lambda^{-1} \div \cQ W_\cQ\|_{L^p}\bigg) \|\ddj w\|^{p-1}_{L^p}.\end{split}
  \een

Observe that
 \beno
 \|\ddj d\|_{L^p}\|\ddj   w\|^{p-1}_{L^p}\leq \delta/2 \|\ddj  w\|^{p}_{L^p}+C_\delta  \|\ddj d\|^p_{L^p},
 \eeno
where the constant $\delta$ is chosen to satisfy the following inequality
 \beno
  \|\ddj d\|_{L^p}+\delta \|\ddj w\|_{L^p}\geq   \f12(\|\ddj d\|_{L^p}+\delta \|\ddj  \Lambda a \|_{L^p}).
 \eeno
Meanwhile, choose a suitable $R_0$ such that for any $2^j\geq R_0$ and $2\leq p\leq 4$, then we have  $c_p 2^{2j}-2-C_\delta\geq \f{c_p}2 2^{2j}$. Thus, when $2^j\geq R_0$, we  get that
  \ben\label{est:h4}
  \begin{split}
&  \f{d}{dt}\bigg( \|\ddj d\|_{L^p}+\delta \|\ddj \Lambda a\|_{L^p}\bigg)+ 2^{2j}\|\ddj d\|_{L^p} + \|\ddj \Lambda a\|_{L^p} \\
& \leq C \|\div \cQ u\|_{L^\infty}\|\ddj d\|_{L^p}+\|[\ddj, u\cdot \na] d\|_{L^p}  + \|\ddj \cQ W_\cQ\|_{L^p}+C\|\ddj \Lambda^{-1} \div \cQ W_\cQ\|_{L^p} \\
&\quad+\|\ddj [\Lambda^{-1} \div, u\cdot \na] \cQ u\|_{L^p} + C\|\div u\|_{L^\infty}\|\ddj \Lambda a\|_{L^p}+C\|[\ddj, u\cdot \na]\Lambda a\|_{L^p}\\
&\quad +C \|\ddj [\Lambda, u\cdot\na ] a\|_{L^p}+C\|\ddj \Lambda(a\div u)\|_{L^p}.\end{split}
\een
By the definition of Besov space, we deduce that
\beno
\begin{split}
&\|d, \Lambda a\|^H_{\dot B^{\f3p-1}_{p,1}}+\int_0^T \|d\|^H_{\dot B^{\f3p+1}_{p,1}}+\|\Lambda a\|^H_{\dot B^{\f3p-1}_{p,1}}\,dt\\
&\leq \int_0^T \|\div \cQ u\|_{L^\infty}\|d, \Lambda a\|^H_{\dot B^{\f3p-1}_{p,1}}\,dt +  \|  \cQ W_\cQ\|^H_{L^1_T(\dot B^{\f3p-1}_{p,1})}\\
&\quad+ \| a\div u \|^H_{L^1_T(\dot B^{\f3p }_{p,1})}+\int_0^T\sum_{2^j\geq R_0} 2^{j(3/p-1)}(\|[\ddj, u\cdot \na] d\|_{L^p}+ \|\ddj [\Lambda, u\cdot\na ] a\|_{L^p})\,dt\\
&\quad +\int_0^T\sum_{2^j\geq R_0} 2^{j(3/p-1)}(\|\ddj [\Lambda^{-1} \div, u\cdot \na] \cQ u\|_{L^p}+\|[\ddj, u\cdot \na] \Lambda a \|_{L^p}  )\,dt.
\end{split}
\eeno
Thanks to Proposition \ref{prop:pro}-Proposition \ref{prop:comp} as well as Lemma \ref{lem:G1}, the above inequality can be written by
\ben \label{est:a2}
\begin{split}
&\quad \|d, \Lambda a\|^H_{\dot B^{\f3p-1}_{p,1}}+\int_0^T\|d\|^H_{\dot B^{\f3p+1}_{p,1}}+\|\Lambda a\|^H_{\dot B^{\f3p-1}_{p,1}}\,dt \leq \|d_0, \Lambda a_0\|^H_{\dot B^{\f3p-1}_{p,1}}+C \int_0^T \bigg(\|\cQ u\| _{\dot B^{\f52,\f3p+1}_{2,p}}+\|\cP u\|_{\dot B^{\f3p+1}_{p,1}} \\
&\quad\quad+\|a\|_{\dot B^{\f52,\f3p}_{2,p}}\bigg)\bigg(\|a\|_{\dot B^{\f12,\f 3p }_{2,p}}  +\|\cQ u\| _{\dot B^{\f12,\f3p-1}_{2,p}}+\|(\cP u)^h\|_{\dot B^{\f3p-1}_{p,1}}\bigg)\,dt. 
\end{split}
\een

{\it Step 3: Estimates for incompressible part of the velocity.}
To close the estimates, we need to estimate   $\cP u.$  For $(\cP u)^h$,
Applying Proposition \ref{stokesprop} to the third equation of \eqref{neweq}, we obtain that
\ben\label{est:a3}
\begin{split}
&\|(\cP u)^h\|_{\wt L^\infty_T(\dot B^{\f3p-1}_{p,1})}+\|(\cP u)^h\|_{L^1_T(\dot B^{\f3p+1}_{p,1})} \\
&\leq\|(\cP u_0)^h\|_{\wt L^\infty_T(\dot B^{\f3p-1}_{p,1})}+ C\int_0^T\|\div u\|_{L^\infty}\|\cP u^h\|_{ \dot B^{\f3p-1}_{p,1} }\,dt+\int_0^T\|(\cP W_\cP)^h\|_{ \dot B^{\f3p-1}_{p,1} }\,dt \\
&\quad+C\int_0^T\sum_{j\in\N} 2^{j(\f 3p-1)}(\|[\ddj, u\cdot \na](\cP u)^h\|_{L^p}+\|[\cP, u\cdot\na ]u^h \|_{L^p})\,dt\\
&\leq\|(\cP u_0)^h\|_{\wt L^\infty_T(\dot B^{\f3p-1}_{p,1})}+ C\int_0^T\|a\|_{\dot B^{\f12,\f3p}_{2,p}}  (\|(\cQ u)^h\|_{\dot B^{5/2,3/p+1}_{2,p}}+\|(\cP u)^h\|_{ \dot B^{\f3p+1}_{p,1} }) \,dt \\
&\quad+C\int_0^T(\|(\cQ u)^h\|_{\dot B^{\f12,\f3p-1}_{2,p}}+\|(\cP u)^h\|_{\dot B^{\f3p-1}_{2,p}})  (\|\cQ u\|_{\dot B^{\f52,\f3p+1}_{2,p}}+\|\cP u \|_{ \dot B^{\f3p+1}_{p,1} })\,dt,
\end{split}
\een
where we used Lemma \ref{lem:G1} and
\beno
 &&\sum_{j\in\N} 2^{j(\f 3p-1)}(\|[\ddj, u\cdot \na](\cP u)^h\|_{L^p}+\|[\cP, u\cdot\na ]u^h \|_{L^p}) \\
  &&\leq C(\|(\cQ u)^h\|_{\dot B^{\f12,\f3p-1}_{2,p}}+\|(\cP u)^h\|_{\dot B^{\f3p-1}_{p,1}})  (\|\cQ u \|_{\dot B^{\f52,\f3p+1}_{2,p}}+\|\cP u \|_{ \dot B^{\f3p+1}_{p,1} }).
\eeno

For $(\cP u)^3$, deducing from divergence free $\div \cP u=0$, we  have
\beno
 \cP(u\cdot \na u^3 )= \cP\big(((\cQ u)^h+(\cP  u)^h)\pa_h  u^3 \big)-\cP(u^3 \div_h (\cP u)^h )+ \cP(u^3 \pa_3(\cQ u)^3 ).
\eeno
Thus, applying Proposition \ref{stokesprop} again to the last equation of \eqref{neweq}, we obtain that
\ben\label{est:a4}
\begin{split}
&\|(\cP u)^3\|_{\wt L^\infty_T(\dot B^{\f3p-1}_{p,1})}+\|(\cP u)^3\|_{L^1_T(\dot B^{\f3p+1}_{p,1})}\\
&\leq\|(\cP u_0)^3\|_{\wt L^\infty_T(\dot B^{\f3p-1}_{p,1})}+\int_0^T\|((\cQ u)^h+(\cP u)^h)\pa_h  u^3\|_{ \dot B^{\f3p-1}_{p,1} }\,dt \\
&\quad+\int_0^T\bigg(\|u^3 \div_h (\cP u)^h\|_{ \dot B^{\f3p-1}_{p,1} }+\|u^3 \pa_3(\cQ u)^3\|_{ \dot B^{\f3p-1}_{p,1} }+\|(\cP W_\cP)^3\|_{ \dot B^{\f3p-1}_{p,1} }\bigg)\,dt. \end{split}
\een
Thanks to Proposition \ref{prop:pro} and Lemma \ref{Lem:bi-est}, we have  
\beno
\|((\cQ u)^h+(\cP u)^h)\pa_h  u^3\|_{ \dot B^{\f3p-1}_{p,1} }
\leq C(\|(\cQ u)^h\|_{\dot B^{\f 12, \f3p-1}_{2,p}}+\|(\cP u)^h\|_{ \dot B^{\f3p-1}_{p,1} }) \|(\cP u)^3\|_{ \dot B^{\f3p+1}_{p,1} },
\eeno
and
\beno
\begin{split}
&\|u^3 \div_h (\cP u)^h\|_{ \dot B^{\f3p-1}_{p,1} }+ \|u^3 \pa_3(\cQ u)^3\|_{ \dot B^{\f3p-1}_{p,1} }+ \|(\cP W_\cP)^3\|_{ \dot B^{\f3p-1}_{p,1} }\\
 &\leq C(\|(\cQ u)^3\|_{\dot B^{\f 12, \f3p-1}_{2,p}}+\|(\cP u)^3\|_{ \dot B^{\f3p-1}_{p,1} })  (\|(\cP u)^h\|_{ \dot B^{\f3p+1}_{p,1} }+\|(\cQ u)^3\|_{ \dot B^{\f52, \f3p+1}_{2, p} } )\\
 &\quad+C \|a\|_{\dot B^{\f 12, \f3p }_{2,p}}  (\|(\cP u)^h\|_{ \dot B^{\f3p+1}_{p,1} }+\|(\cP u)^3\|_{ \dot B^{\f3p+1}_{p,1} } ),
 \end{split}
\eeno
from which together with Lemma \ref{lem:G1} and \eqref{est:a4}, we derive that
\ben\label{est:a5}
\begin{split}
&\|(\cP u)^3\|_{\wt L^\infty_T(\dot B^{\f3p-1}_{p,1})}+\|(\cP u)^3\|_{L^1_T(\dot B^{\f3p+1}_{p,1})}\\
&\leq\|(\cP u_0)^3\|_{\wt L^\infty_T(\dot B^{\f3p-1}_{p,1})}+ C\int_0^T (\|(\cQ u)^h\|_{\dot B^{\f 12, \f3p-1}_{2,p}}+\|(\cP u)^h\|_{ \dot B^{\f3p-1}_{p,1} })  \|(\cP u)^3\|_{ \dot B^{\f3p+1}_{p,1} } \,dt \\
&\quad+C\int^T_0(\|(\cQ u)^3\|_{\dot B^{\f 12, \f3p-1}_{2,p}}+\|(\cP u)^3\|_{ \dot B^{\f3p-1}_{p,1} })  (\|(\cP u)^h\|_{ \dot B^{\f3p+1}_{p,1} }+\|(\cQ u)^3\|_{ \dot B^{\f52, \f3p+1}_{2, p} } )\,dt\\
&\quad+C\int^T_0 \|a\|_{\dot B^{\f 12, \f3p }_{2,p}}  (\|(\cP u)^h\|_{ \dot B^{\f3p+1}_{p,1} }+\|(\cP u)^3\|_{ \dot B^{\f3p+1}_{p,1}} )\,dt.\end{split}
\een

 {\it Step 4: Continuity argument.}  
We first deduce from \eqref{est:a1}, \eqref{est:a2} and \eqref{est:a3} that   
\beno
&&\|a(t)\|_{\dot B^{\f12, \f3p}_{2,p}}+ \|d(t)\|_{\dot B^{\f12, \f3p-1}_{2,p}}+ \|(\cP u)^h(t)\|_{\dot B^{  \f3p-1}_{p,1}}+\int^t_0\bigg(\|a\|_{\dot B^{\f52, \f3p}_{2,p}}+ \|d\|_{\dot B^{\f52, \f3p+1}_{2,p}}+ \|\cP u \|_{\dot B^{ \f3p+1}_{p,1}}\bigg)d\tau\\
&&\leq C\bigg(\|a_0\|_{\dot B^{\f12, \f3p}_{2,p}}+ \|d_0\|_{\dot B^{\f12, \f3p-1}_{2,p}}+ \|(\cP u_0)^h\|_{\dot B^{ \f3p-1}_{p,1}}\bigg)\\
&&\quad +\int^t_0\bigg(\|a\|_{\dot B^{\f52, \f3p}_{2,p}}+ \|d\|_{\dot B^{\f52, \f3p+1}_{2,p}}+ \|\cP u \|_{\dot B^{ \f3p+1}_{p,1}}\bigg)\bigg(\|a\|_{\dot B^{\f12, \f3p}_{2,p}}+ \|d\|_{\dot B^{\f12, \f3p-1}_{2,p}}+ \|(\cP u)^h\|_{\dot B^{  \f3p-1}_{p,1}}\bigg)\,d\tau.
\eeno 
 
Let $\eps^{\f12}\le c_1\ll1$. We define $\bar{T}$ by
\begin{equation}\label{defit}
\begin{split}
\bar{T}\eqdefa\sup \{ T>0: \|a\|_{\wt{L}^\infty_T(\dot{B}^{\f12, \f3p}_{2,p})} +\|d\|_{\wt{L}^\infty_T(\dot{B}^{\f12, \f3p-1}_{2,p})}+\|(\cP u)^h\|_{\wt{L}^\infty_T(\dot{B}^{\f3p-1}_{p,1})}  \leq c_1 \}.
\end{split}
\end{equation}
According to the local existence and blow up criterion for the system, it is obvious that $\bar{T}>0.$ We shall prove $\bar{T}=\infty$ under the assumption \eqref{eq:smalldata1}. 
 For any $t\in [0, \bar{T}],$ the above inequality can be recast by
 \ben\label{adu}
 &&\|a(t)\|_{\dot B^{\f12, \f3p}_{2,p}}+ \|d(t)\|_{\dot B^{\f12, \f3p-1}_{2,p}}+ \|(\cP u)^h(t)\|_{\dot B^{  \f3p-1}_{p,1}}+(1-c_1)\int^t_0\bigg(\|a\|_{\dot B^{\f52, \f3p}_{2,p}}+ \|d\|_{\dot B^{\f52, \f3p+1}_{2,p}}+ \|(\cP u)^h \|_{\dot B^{ \f3p+1}_{p,1}}\bigg)d\tau\notag\\
 &&\leq  \|a_0\|_{\dot B^{\f12, \f3p}_{2,p}}+ \|d_0\|_{\dot B^{\f12, \f3p-1}_{2,p}}+ \|(\cP u_0)^h\|_{\dot B^{ \f3p-1}_{p,1}}+C\int^t_0  \|(\cP u)^3 \|_{\dot B^{ \f3p+1}_{p,1}}\bigg(\|a\|_{\dot B^{\f12, \f3p}_{2,p}}+ \|d\|_{\dot B^{\f12, \f3p-1}_{2,p}}\notag\\&&\qquad\qquad+ \|(\cP u)^h\|_{\dot B^{  \f3p-1}_{p,1}}\bigg)\,d\tau,
 \een
which implies that 
 \ben\label{localest1}
 \begin{split}
 \|a(t)\|_{\dot B^{\f12, \f3p}_{2,p}}+ \|d(t)\|_{\dot B^{\f12, \f3p-1}_{2,p}}+ \|(\cP u)^h(t)\|_{\dot B^{  \f3p-1}_{p,1}}\leq& C\bigg(\|a_0\|_{\dot B^{\f12, \f3p}_{2,p}}+ \|d_0\|_{\dot B^{\f12, \f3p-1}_{2,p}}+ \|(\cP u_0)^h\|_{\dot B^{ \f3p-1}_{p,1}}\bigg)\\
&\times\exp \bigg( {\int^t_0  \|(\cP u)^3 \|_{\dot B^{ \f3p+1}_{p,1}}}\,d\tau\bigg),
\end{split}
\een
for any $t\in [0, \bar{T}].$

On the other hand,  from \eqref{est:a5} and \eqref{defit} ,  we  get that for any $t\in [0, \bar{T}],$
\beno
\begin{split}
\|(\cP u)^3\|_{\wt L^\infty_t(\dot B^{\f3p-1}_{p,1})}+\f23\|(\cP u)^3\|_{L^1_t(\dot B^{\f3p+1}_{p,1})}
\leq& \|(\cP u_0)^3\|_{\dot B^{\f3p-1}_{p,1}}+\int^t_0 \bigg(\|a\|_{\dot B^{\f52, \f3p}_{2,p}}+ \|d\|_{\dot B^{\f52, \f3p+1}_{2,p}}+ \|(\cP u)^h \|_{\dot B^{ \f3p+1}_{p,1}}\bigg)  \\
&\times\bigg(\|a\|_{\dot B^{\f12, \f3p}_{2,p}}+ \|d\|_{\dot B^{\f12, \f3p-1}_{2,p}}+ \|(\cP u)^h\|_{\dot B^{  \f3p-1}_{p,1}}\bigg)\,d\tau,
\end{split}
\eeno
 from which together with \eqref{defit} and \eqref{adu}, we obtain that for any $t\in [0, \bar{T}],$
\ben\label{intepu3}  {\int^t_0  \|(\cP u)^3 \|_{\dot B^{ \f3p+1}_{p,1}}}\,d\tau\le C( \|a_0\|_{\dot B^{\f12, \f3p}_{2,p}}+ \|d_0\|_{\dot B^{\f12, \f3p-1}_{2,p}}+ \|(\cP u_0)^h\|_{\dot B^{ \f3p-1}_{p,1}}+\|(\cP u_0)^3\|_{\dot B^{\f3p-1}_{p,1}}). \een
 Plugging this estimates into \eqref{localest1} and using the condition \eqref{eq:smalldata1}, we obtain that
\ben\label{adpuest}
\begin{split}
&\|a(t)\|_{\dot B^{\f12, \f3p}_{2,p}}+ \|d(t)\|_{\dot B^{\f12, \f3p-1}_{2,p}}+ \|(\cP u)^h(t)\|_{\dot B^{  \f3p-1}_{p,1}}\\
&\leq C\bigg(\|a_0\|_{\dot B^{\f12, \f3p}_{2,p}}+ \|d_0\|_{\dot B^{\f12, \f3p-1}_{2,p}}+ \|(\cP u_0)^h\|_{\dot B^{ \f3p-1}_{p,1}}\bigg)  \exp{\big(C (\eps+\|(\cP u_0)^3\|_{ \dot B^{\f3p-1}_{p,1}})\big)}\\
&\leq C\eps\leq \f{c_1}2,
\end{split}
\een
for any $t\in [0, \bar{T}],$ which contradicts with the definition of $\bar{T}$. We conclude that $\bar{T}=\infty$ and \eqref{adpuest} holds for all time, from which together with \eqref{adu} and \eqref{intepu3} will imply the desired estimates in the theorem. It ends the proof of Theorem \ref{thm:main3}. 
\end{proof}

\setcounter{equation}{0}
  \section{Appendix}
For  readers' convenience, in this appendix, we list some basic knowledge on  Littlewood-Paley  theory.
\subsection{Littlewood-Paley decomposition}

Let us introduce the Littlewood-Paley decomposition. Choose a
radial function  $\varphi \in {\cS}(\R^3)$ supported in
${\cC}=\{\xi\in\R^3,\, \frac{3}{4}\le|\xi|\le\frac{8}{3}\}$ such
that
\beno \sum_{j\in\Z}\varphi(2^{-j}\xi)=1 \quad \textrm{for
all}\,\,\xi\neq 0. \eeno The frequency localization operator
$\Delta_j$ and $S_j$ are defined by
\begin{align}
\Delta_jf=\varphi(2^{-j}D)f,\quad S_jf=\sum_{k\le
j-1}\Delta_kf\quad\mbox{for}\quad j\in \Z. \nonumber
\end{align}
With our choice of $\varphi$, one can easily verify that
\ben\label{orth}
\begin{aligned}
\Delta_j\Delta_kf=0\quad \textrm{if}\quad|j-k|\ge 2\quad
\textrm{and}
\quad
\Delta_j(S_{k-1}f\Delta_k f)=0\quad \textrm{if}\quad|j-k|\ge 5.
\end{aligned}
\een

Next we recall Bony's decomposition from \cite{Bony}:
\ben\label{Bonydecom}
uv=T_uv+T_vu+R(u,v),
\een
with
$$T_uv=\sum_{j\in\Z}S_{j-1}u\Delta_jv, \quad R(u,v)=\sum_{j\in\Z}\Delta_ju \widetilde{\Delta}_{j}v,
\quad \widetilde{\Delta}_{j}v=\sum_{|j'-j|\le1}\Delta_{j'}v.$$

\subsection{Product estimates in Besov spaces}

We first recall the Bernstein lemma which will be frequently used(see \cite{Bah}).

\begin{lem}\label{Lem:Bernstein}
Let $1\le p\le q\le+\infty$. Assume that $f\in L^p(\R^3)$, then for
any $\gamma\in(\N\cup\{0\})^3$, there exist constants $C_1$, $C_2$
independent of $f$, $j$ such that
\beno
&&{\rm supp}\hat f\subseteq
\{|\xi|\le A_02^{j}\}\Rightarrow \|\partial^\gamma f\|_q\le
C_12^{j{|\gamma|}+3j(\frac{1}{p}-\frac{1}{q})}\|f\|_{p},
\\
&&{\rm supp}\hat f\subseteq \{A_12^{j}\le|\xi|\le
A_22^{j}\}\Rightarrow \|f\|_{p}\le
C_22^{-j|\gamma|}\sup_{|\beta|=|\gamma|}\|\partial^\beta f\|_p.
\eeno
\end{lem}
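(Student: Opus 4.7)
The plan is to prove both inequalities by Fourier-multiplier arguments: exploit the support condition on $\hat f$ to insert a smooth cutoff, turn the statement into convolution with a well-chosen kernel, and close via Young's inequality together with the scaling of the Fourier transform. Throughout, the key observation is that under the dyadic localization, every multiplier at scale $2^j$ is the $2^j$-dilate of a fixed multiplier, so all estimates reduce to a $j$-independent estimate times the correct power of $2^j$.

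For the first estimate, I would fix once and for all a radial $\chi\in\cS(\R^3)$ with $\chi\equiv 1$ on $\{|\xi|\le A_0\}$ and compactly supported, and set $\chi_j(\xi)=\chi(2^{-j}\xi)$. Since $\hat f=\chi_j\hat f$, one has $\partial^\gamma f=K_j*f$ with $K_j=\cF^{-1}\!\bigl((i\xi)^\gamma\chi_j\bigr)$. Young's inequality with $\frac1r=1-\frac1p+\frac1q$ gives $\|\partial^\gamma f\|_{L^q}\le\|K_j\|_{L^r}\|f\|_{L^p}$, and the change of variables $\xi=2^j\eta$ yields $K_j(x)=2^{3j+j|\gamma|}K(2^jx)$ for a fixed Schwartz kernel $K$, hence $\|K_j\|_{L^r}=C\,2^{j|\gamma|+3j(1/p-1/q)}$, which is exactly the claimed bound.

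For the second estimate, the task is to recover $f$ from $\partial^\beta f$ when $\hat f$ lives in an annulus. I would pick a smooth $\psi$ supported in a slightly larger annulus and equal to $1$ on $\{A_1\le|\xi|\le A_2\}$, so that $\hat f(\xi)=\psi(2^{-j}\xi)\hat f(\xi)$. The multinomial identity $|\xi|^{2k}=\sum_{|\beta|=k}\binom{k}{\beta}\xi^{2\beta}$ (with $k=|\gamma|$) allows me to write
\begin{equation*}
\hat f(\xi)=\sum_{|\beta|=k}\underbrace{(-i)^{k}\binom{k}{\beta}\frac{\xi^{\beta}}{|\xi|^{2k}}\psi(2^{-j}\xi)}_{=:m_{\beta,j}(\xi)}\;\widehat{\partial^\beta f}(\xi),
\end{equation*}
where each $m_{\beta,j}$ is smooth and supported in an annulus of size $2^j$, away from the origin, so no inversion issue arises. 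Writing $m_{\beta,j}(\xi)=2^{-jk}\tilde m_\beta(2^{-j}\xi)$ with $\tilde m_\beta$ a fixed Schwartz function shows $\|\cF^{-1}m_{\beta,j}\|_{L^1}=C\,2^{-jk}$, independent of $j$, so Young's inequality gives $\|f\|_{L^p}\le C\,2^{-jk}\sum_{|\beta|=k}\|\partial^\beta f\|_{L^p}\le C'\,2^{-j|\gamma|}\sup_{|\beta|=|\gamma|}\|\partial^\beta f\|_{L^p}$.

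The only mildly delicate point is the bookkeeping of the scaling factors in the dilated kernels; everything else is a direct consequence of Young's inequality and the fact that Schwartz multipliers have $L^1$ inverse Fourier transforms of size $O(1)$ at scale $1$. No hypothesis beyond the stated support conditions is needed, and the universal constants depend only on $p,q,|\gamma|$ and the fixed choices of $\chi$ and $\psi$.
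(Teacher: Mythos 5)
Your proof is correct and is precisely the standard argument from the reference \cite{Bah} that the paper cites for this lemma; the paper itself gives no proof, only the citation. For the first inequality you correctly insert a Schwartz cutoff $\chi_j=\chi(2^{-j}\cdot)$, write $\partial^\gamma f=K_j*f$ with $K_j=\cF^{-1}\bigl((i\xi)^\gamma\chi_j\bigr)$, and apply Young's inequality together with the scaling $K_j(x)=2^{j|\gamma|+3j}K(2^jx)$, which gives exactly $\|K_j\|_{L^r}=C\,2^{j|\gamma|+3j(1/p-1/q)}$ once $1/r=1-1/p+1/q$. For the second, the key idea is the multinomial identity $|\xi|^{2k}=\sum_{|\beta|=k}\binom{k}{\beta}\xi^{2\beta}$, which lets you divide through and express $f$ as a sum of convolutions of $\partial^\beta f$ against kernels $\cF^{-1}m_{\beta,j}$ that, because $\psi$ is supported away from the origin, are genuine Schwartz functions at scale $2^j$ with $L^1$ norm $C\,2^{-jk}$; Young's inequality then closes the estimate. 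Both the choice of multipliers and the bookkeeping of the dilation scaling are handled correctly, and no hypothesis beyond the stated support conditions is used.
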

 
As a consequence,  the    estimates  for paraproduct and remainder operators can be given by 
 \begin{lem}\label{Lem:bonyweighttimespace}
Let $1\le p,q,q_1,q_2\le \infty$ with $\f 1{q_1}+\f1{q_2}=\f1q$.
Then there hold

 (a)\, if $s_2\le \frac{3}{p}$, we have
\beno
\|T_gf\|_{\widetilde{L}^{q}_T(\dot B^{s_1+s_2-
\frac{3}{p}}_{p,1} )} \le C\|f\|_{\widetilde{L}^{q_1}_T(\dot
B^{s_1}_{p,1} )}\|g\|_{\widetilde{L}^{q_2}_T(\dot
B^{s_2}_{p,1})}; \eeno

 (b)\, if $s_1\le \frac{3}{p}-1$, we have
\beno
\|T_fg\|_{\widetilde{L}^{q}_T(\dot B^{s_1+s_2-
\frac{3}{p}}_{p,1} )} \le C\|f\|_{\widetilde{L}^{q_1}_T(\dot
B^{s_1}_{p,1} )}\|g\|_{\widetilde{L}^{q_2}_T(\dot
B^{s_2}_{p,1})};
\eeno

  (c)\, if $s_1+s_2>3\max(0,\frac 2p-1)$, we have
\beno
\|R(f,g)\|_{\widetilde{L}^{q}_T(\dot B^{s_1+s_2-
\frac{3}{p}}_{p,1} )} \le C\|f\|_{\widetilde{L}^{q_1}_T(\dot
B^{s_1}_{p,1} )}\|g\|_{\widetilde{L}^{q_2}_T(\dot
B^{s_2}_{p,1})}.
\eeno
\end{lem}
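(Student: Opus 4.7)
\emph{Proof proposal.} This lemma collects classical product and remainder estimates in Besov spaces, lifted to the ``tilde'' time-Lebesgue--Besov norms. The plan is to combine the spectral localization inherent in Bony's decomposition with H\"older's inequality (both in space and in time) and Bernstein's lemma (Lemma~\ref{Lem:Bernstein}).

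For the paraproduct estimates (a) and (b) the starting point is the almost-orthogonality \eqref{orth} which gives $\ppd_k T_g f = \sum_{|j-k|\le 4} \ppd_k(S_{j-1}g\cdot\ppd_j f)$. H\"older in space yields $\|S_{j-1}g\cdot\ppd_j f\|_{L^p}\le \|S_{j-1}g\|_{L^\infty}\|\ppd_j f\|_{L^p}$, and Bernstein controls the $L^\infty$ factor as $\|\ppd_{j'} g\|_{L^\infty}\le C\, 2^{3j'/p}\|\ppd_{j'} g\|_{L^p}$. Using the hypothesis $s_2\le 3/p$, summation over $j'\le j-2$ yields $\|S_{j-1}g\|_{L^\infty}\le C\, 2^{j(3/p-s_2)}\|g\|_{\dot B^{s_2}_{p,1}}$ (with the appropriate endpoint modification when $s_2=3/p$, using the embedding $\dot B^{3/p}_{p,1}\hookrightarrow L^\infty$). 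Multiplying by the weight $2^{k(s_1+s_2-3/p)}$ and using $|j-k|\le 4$, the powers of $2^j$ telescope against the weight to leave, after summation over $k$, the product $\|f\|_{\dot B^{s_1}_{p,1}}\|g\|_{\dot B^{s_2}_{p,1}}$. The time-Lebesgue aspect is then treated by H\"older in time with $1/q_1+1/q_2=1/q$ applied block-by-block before the outer $\ell^1$ Besov summation: this is precisely what the ``tilde'' norm is designed to accommodate. The proof of (b) is identical after interchanging $f$ and $g$.

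For the remainder (c), the Fourier support of $\ppd_j f\cdot\widetilde{\Delta}_j g$ lies in $B(0,C\,2^j)$, hence $\ppd_k R(f,g)=\sum_{j\ge k-3}\ppd_k(\ppd_j f\cdot\widetilde{\Delta}_j g)$. The argument splits into two regimes. If $p\ge 2$ then $L^{p/2}$ is a Banach space and H\"older gives $\|\ppd_j f\cdot\widetilde{\Delta}_j g\|_{L^{p/2}}\le \|\ppd_j f\|_{L^p}\|\widetilde{\Delta}_j g\|_{L^p}$; Bernstein from $L^{p/2}$ to $L^p$ then produces the factor $2^{3k/p}$, and summability in $k$ requires only $s_1+s_2>0$. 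If $p<2$, I route through $L^1$: Bernstein embeds each factor into $L^2$ at cost $2^{3j(1/p-1/2)}$, so $\|\ppd_j f\cdot\widetilde{\Delta}_j g\|_{L^1}\le C\,2^{3j(2/p-1)}\|\ppd_j f\|_{L^p}\|\widetilde{\Delta}_j g\|_{L^p}$; Bernstein from $L^1$ back to $L^p$ then costs the additional factor $2^{3k(1-1/p)}$. Writing $c_j=2^{js_1}\|\ppd_j f\|_{L^p}$ and $d_j=2^{js_2}\|\widetilde{\Delta}_j g\|_{L^p}$, the resulting weighted dyadic sum reduces to $\sum_{j\ge k-3} 2^{(k-j)(s_1+s_2-3(2/p-1))} c_j d_j$, which is $\ell^1$-summable in $k$ exactly under the hypothesis $s_1+s_2>3(2/p-1)$. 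Combining both regimes yields the condition $s_1+s_2>3\max(0,2/p-1)$ stated in (c); the time exponents are again handled block-by-block by H\"older.

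\emph{Main obstacle.} The computations are classical but the bookkeeping of dyadic weights is delicate, and the choice of intermediate Lebesgue exponent in (c) (routing through $L^{p/2}$ when $p\ge 2$ versus through $L^1$ when $p<2$) is what distinguishes the sharp condition $s_1+s_2>3\max(0,2/p-1)$ from the naive $s_1+s_2>3/p$ one gets from a crude Bernstein/H\"older chain. A second subtlety is compatibility between H\"older in time and the ``tilde'' norm $\wt L^q_T(\dot B^s_{p,1})$: since the $L^q_t$ norm is taken \emph{before} the Besov summation, one cannot invoke Minkowski to permute the orders of integration. H\"older in time must instead be applied separately on each dyadic block and then combined with the outer $\ell^1$ dyadic sum -- which is the precise structural feature that makes the tilde norm the right setting for this estimate.
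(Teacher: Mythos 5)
The paper provides no proof of this lemma---it simply cites \cite{Bah}---and your argument reproduces the classical one there: paraproducts via Bernstein plus geometric summation under $s_2\le 3/p$ (with the endpoint embedding $\dot B^{3/p}_{p,1}\hookrightarrow L^\infty$), remainder via the $L^{p/2}$ (for $p\ge 2$) versus $L^1$ (for $p<2$) detour giving exactly $s_1+s_2>3\max(0,2/p-1)$, all combined with block-by-block H\"older in time to respect the tilde norms. This is correct; note only that your interchange argument for (b) in fact proves the estimate under the weaker hypothesis $s_1\le 3/p$, which a fortiori covers the stated $s_1\le 3/p-1$.
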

We refer readers  to \cite{Bah} for detailed proof.
Then we arrive at the  product estimates:
\begin{lem}\label{Lem:bi-weight}
Let $s_1\le \frac{3}{p}-1, s_2\le \frac{3}{p}, s_1+s_2>3\max(0,\frac 2p-1)$, and $1\le p,q,q_1,q_2\le \infty$
with $\f 1{q_1}+\f1{q_2}=\f1q$. Then there holds
\beno
\|fg\|_{\widetilde{L}^{q}_T(\dot B^{s_1+s_2-
\frac{3}{p}}_{p,1} )} \le C\|f\|_{\widetilde{L}^{q_1}_T(\dot
B^{s_1}_{p,1} )}\|g\|_{\widetilde{L}^{q_2}_T(\dot
B^{s_2}_{p,1})}.
\eeno
\end{lem}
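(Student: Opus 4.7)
The plan is to reduce the product estimate to the three building blocks already provided by Lemma \ref{Lem:bonyweighttimespace}, via Bony's decomposition \eqref{Bonydecom}. Writing
\[
fg = T_f g + T_g f + R(f,g),
\]
I would estimate each of the three pieces separately in $\widetilde L^q_T(\dot B^{s_1+s_2-3/p}_{p,1})$ and then sum.

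For the first paraproduct $T_f g$, I apply part (b) of Lemma \ref{Lem:bonyweighttimespace} with the roles $(f,g)$ there taken to be $(f,g)$ here: this requires exactly $s_1 \le 3/p - 1$, which is the first hypothesis, and produces
\[
\|T_f g\|_{\widetilde L^q_T(\dot B^{s_1+s_2-3/p}_{p,1})}
\le C\|f\|_{\widetilde L^{q_1}_T(\dot B^{s_1}_{p,1})} \|g\|_{\widetilde L^{q_2}_T(\dot B^{s_2}_{p,1})}.
\]
For the symmetric paraproduct $T_g f$ I apply part (a) of the same lemma, which asks for $s_2 \le 3/p$, exactly the second hypothesis; this yields the same right-hand side bound. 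For the remainder $R(f,g)$ I invoke part (c), whose hypothesis $s_1+s_2 > 3\max(0, 2/p - 1)$ is precisely the third hypothesis of our lemma and ensures convergence of the diagonal sum $\sum_j \Delta_j f \,\widetilde\Delta_j g$ in the target Besov space.

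Summing the three estimates and using $\tfrac1{q_1}+\tfrac1{q_2}=\tfrac1q$ (which is what makes Hölder in time compatible across the three pieces) gives the stated inequality. There is no real obstacle here: the decomposition is the standard one, and each hypothesis on $(s_1,s_2)$ in the lemma has been engineered to match exactly one of the three Bony pieces. The only minor point to keep track of is that the Besov regularity index $s_1+s_2-3/p$ appearing on the left is a strict loss only when both indices are at the boundary of their admissible ranges, but since the three estimates of Lemma \ref{Lem:bonyweighttimespace} all produce the same regularity index, adding them is immediate and no interpolation or endpoint argument is required.
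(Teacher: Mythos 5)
Your proposal is correct and is exactly the argument the paper intends: Lemma \ref{Lem:bi-weight} is stated immediately after Lemma \ref{Lem:bonyweighttimespace} precisely because it follows by applying Bony's decomposition $fg = T_f g + T_g f + R(f,g)$ and invoking parts (b), (a), (c) respectively, each of whose hypotheses matches one of the three conditions on $(s_1,s_2)$. Nothing further is needed.
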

 
\begin{lem}\label{Lem:bi-est}
Let $s_1, s_2\le \frac{3}{p},\, s_1+s_2>3\max (0,\frac2p-1)$, and $1\le p,q,q_1,q_2\le \infty$ with $\f 1{q_1}+\f1{q_2}=\f1q$.
Then there holds
\beno \|fg\|_{\widetilde{L}^{q}_T(\dot B^{s_1+s_2-
\frac{3}{p}}_{p,1})} \le C\|f\|_{\widetilde{L}^{q_1}_T(\dot
B^{s_1}_{p,1})}\|g\|_{\widetilde{L}^{q_2}_T(\dot B^{s_2}_{p,1})}.
\eeno
\end{lem}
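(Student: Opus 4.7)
My plan is to estimate $fg$ by Bony's decomposition~\eqref{Bonydecom}, $fg=T_f g+T_g f+R(f,g)$, and to bound each piece separately in $\widetilde L^q_T(\dot B^{s_1+s_2-3/p}_{p,1})$. Under the hypotheses of Lemma~\ref{Lem:bi-est}, Lemma~\ref{Lem:bonyweighttimespace}(a) immediately controls $T_g f$ (since $s_2\le 3/p$) and Lemma~\ref{Lem:bonyweighttimespace}(c) immediately controls $R(f,g)$ (since $s_1+s_2>3\max(0,2/p-1)$).

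The only obstacle is the paraproduct $T_f g=\sum_k \dot S_{k-1}f\,\dot\Delta_k g$, since part~(b) of Lemma~\ref{Lem:bonyweighttimespace} requires $s_1\le 3/p-1$, whereas our hypothesis only provides $s_1\le 3/p$. I will therefore estimate $T_f g$ by hand. Using Bernstein's inequality I will bound the low-frequency factor by
$$
\|\dot S_{k-1} f\|_{\widetilde L^{q_1}_T(L^\infty)}\lesssim\sum_{m\le k-2}2^{3m/p}\|\dot\Delta_m f\|_{L^{q_1}_T(L^p)}\lesssim 2^{k(3/p-s_1)}\|f\|_{\widetilde L^{q_1}_T(\dot B^{s_1}_{p,1})}.
$$
Here the last inequality comes from the factorisation $2^{3m/p}=2^{m(3/p-s_1)}\cdot 2^{m s_1}$ together with the fact that, because $s_1\le 3/p$, the geometric series $\sum_{m\le k-2}2^{m(3/p-s_1)}$ is dominated by its largest term $2^{k(3/p-s_1)}$ (using the convention $2^{k\cdot 0}=1$ at the endpoint $s_1=3/p$, which amounts to the embedding $\dot B^{3/p}_{p,1}\hookrightarrow L^\infty$).

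The spectral localisation property~\eqref{orth} forces $\dot\Delta_j(\dot S_{k-1}f\,\dot\Delta_k g)=0$ unless $|j-k|\le 4$, and H\"older's inequality in time under $1/q=1/q_1+1/q_2$ then yields
$$
2^{j(s_1+s_2-3/p)}\|\dot\Delta_j T_f g\|_{L^q_T(L^p)}\lesssim \|f\|_{\widetilde L^{q_1}_T(\dot B^{s_1}_{p,1})}\sum_{|k-j|\le 4}2^{k s_2}\|\dot\Delta_k g\|_{L^{q_2}_T(L^p)},
$$
where I used that the factor $2^{j(s_1+s_2-3/p)}\cdot 2^{k(3/p-s_1)}$ equals $2^{k s_2}$ up to a universal constant whenever $|j-k|\le 4$. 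Summing in $j\in\Z$, exchanging the two sums and bounding $\#\{j:|j-k|\le 4\}\le 9$, one obtains the claimed bound on $T_f g$ in $\widetilde L^q_T(\dot B^{s_1+s_2-3/p}_{p,1})$. Assembling the three Bony pieces concludes the proof; the principal subtlety is only the endpoint $s_1=3/p$ of the paraproduct, handled by the $\dot B^{3/p}_{p,1}\hookrightarrow L^\infty$ embedding as above.
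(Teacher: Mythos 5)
Your proof is correct, and since the paper states Lemma~\ref{Lem:bi-est} without an explicit argument (both Lemmas~\ref{Lem:bi-weight} and \ref{Lem:bi-est} are presented as consequences of Lemma~\ref{Lem:bonyweighttimespace}, with the reader referred to \cite{Bah}), there is no written proof to compare against; your Bony-decomposition route is the intended one.

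One remark on efficiency: the by-hand estimate of $T_f g$ is not actually needed. Lemma~\ref{Lem:bonyweighttimespace}(a) bounds $T_g f$ under the hypothesis $s_2\le 3/p$ on the \emph{low-frequency factor} $g$; applying the same statement with the roles of $(f,s_1,q_1)$ and $(g,s_2,q_2)$ interchanged immediately bounds $T_f g$ under $s_1\le 3/p$, which is exactly what the lemma hypothesizes. (The restriction $s_1\le 3/p-1$ appearing in part~(b) is a stronger, and here superfluous, condition; the symmetric use of part~(a) is the natural route and your redundant re-derivation of it is a correct but unnecessary detour.) A second, smaller point: the justification ``the geometric series $\sum_{m\le k-2}2^{m(3/p-s_1)}$ is dominated by its largest term'' is literally false at the endpoint $s_1=3/p$, where that series diverges. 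What the estimate actually uses is that $\{2^{ms_1}\|\ddj[m] f\|_{L^{q_1}_T(L^p)}\}_m$ is $\ell^1$-summable (the Besov norm) while the prefactor $2^{m(3/p-s_1)}$ is monotone nondecreasing in $m$ when $s_1\le 3/p$ and hence bounded by its value at $m=k-2$; that is what yields $2^{k(3/p-s_1)}\|f\|_{\widetilde L^{q_1}_T(\dot B^{s_1}_{p,1})}$ uniformly including the endpoint. Your parenthetical about the embedding $\dot B^{3/p}_{p,1}\hookrightarrow L^\infty$ shows you know the right mechanism, but the sentence preceding it misstates it. Also, the norm $\|\dot S_{k-1}f\|_{\widetilde L^{q_1}_T(L^\infty)}$ should read $\|\dot S_{k-1}f\|_{L^{q_1}_T(L^\infty)}$, as $\dot S_{k-1}f$ is a single function rather than a Littlewood--Paley family. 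None of these affect the validity of the argument.
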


Next  commutator between the frequency localization operator and the function  can be estimated by:
\begin{lem}\label{Lem:com-est}
Let $p\in [1,\infty)$ and $s\in (-3\min(\frac 1p,\frac1{p'}),\frac 3p]$.
Then it hold
\beno
\big\|2^{js} \|[\Delta_j,f]\na
g\|_{L^1_T(L^p)}\big\|_{\ell^1}\le C\|f\|_{\widetilde{L}^\infty_T(\dot B^{
\frac{3}{p}}_{p,1})}\|g\|_{L^1_T(\dot
B^{s+1}_{p,1})},\\
\big\|2^{js} \|[\Delta_j,f]\na
g\|_{L^1_T(L^p)}\big\|_{\ell^1}\le C\|f\|_{\widetilde{L}^\infty_T(\dot B^{
\frac{3}{p}+1}_{p,1})}\|g\|_{L^1_T(\dot
B^{s}_{p,1})}.
\eeno
\end{lem}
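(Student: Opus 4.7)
My proposal follows the classical Bony-paraproduct method for commutators between Littlewood--Paley projectors and multiplication operators. The plan has three stages.

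First, I would decompose using Bony's formula. Writing $[\Delta_j, f]\nabla g = \Delta_j(f \nabla g) - f\Delta_j \nabla g$ and applying \eqref{Bonydecom} to both products yields the five-piece expansion
$$[\Delta_j, f]\nabla g = [\Delta_j, T_f]\nabla g + \Delta_j T_{\nabla g} f - T_{\Delta_j \nabla g} f + \Delta_j R(f, \nabla g) - R(f, \Delta_j \nabla g).$$
The last four pieces are routine: applying Lemma \ref{Lem:bonyweighttimespace} with appropriate indices bounds each in $L^1_T(\dot B^s_{p,1})$ by either $\|f\|_{\widetilde L^\infty_T(\dot B^{3/p}_{p,1})}\|g\|_{L^1_T(\dot B^{s+1}_{p,1})}$ (put the derivative on $g$) or $\|f\|_{\widetilde L^\infty_T(\dot B^{3/p+1}_{p,1})}\|g\|_{L^1_T(\dot B^s_{p,1})}$ (put the derivative on $f$). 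The constraint $s \le 3/p$ matches the low-regularity restriction on $T_g f$-type estimates in Lemma \ref{Lem:bonyweighttimespace}(b), while $s > -3\min(1/p, 1/p')$ is exactly the remainder condition $s_1 + s_2 > 3\max(0, 2/p - 1)$ after identifying indices.

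The main piece is the paraproduct commutator $[\Delta_j, T_f]\nabla g$. Using the quasi-orthogonality \eqref{orth}, only indices with $|k - j| \le 4$ contribute, so
$$[\Delta_j, T_f]\nabla g = \sum_{|k-j|\le 4}[\Delta_j, S_{k-1} f]\Delta_k \nabla g.$$
Writing $\Delta_j$ as convolution against $h_j(x) = 2^{3j}h(2^j x)$ and first-order Taylor expanding $S_{k-1}f$ inside the commutator kernel gives
$$[\Delta_j, S_{k-1}f]\Delta_k \nabla g(x) = \int h_j(x-y)\bigl(S_{k-1}f(y) - S_{k-1}f(x)\bigr)\Delta_k \nabla g(y)\,dy,$$
whence the mean value theorem together with $\int |z||h_j(z)|\,dz \lesssim 2^{-j}$ yields $\|[\Delta_j, S_{k-1}f]\Delta_k \nabla g\|_{L^p} \lesssim 2^{-j}\|\nabla S_{k-1}f\|_{L^\infty}\|\Delta_k \nabla g\|_{L^p}$. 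For the first inequality of the lemma, Bernstein applied to the frequency-localized $S_{k-1}f$ gives $\|\nabla S_{k-1}f\|_{L^\infty} \lesssim 2^k \|S_{k-1}f\|_{L^\infty} \lesssim 2^k \|f\|_{\dot B^{3/p}_{p,1}}$; combined with $|k-j|\le 4$ and $\|\Delta_k \nabla g\|_{L^p}\sim 2^k \|\Delta_k g\|_{L^p}$, the $2^{-j}$ kills one factor of $2^k$ and one is left with $2^k \|\Delta_k g\|_{L^p}$, which after multiplication by $2^{js}$ and $\ell^1$-summation reconstructs $\|g\|_{\dot B^{s+1}_{p,1}}$. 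For the second inequality, I would instead bound $\|\nabla S_{k-1}f\|_{L^\infty} \lesssim \sum_{\ell \le k-2} 2^{\ell(1+3/p)}\|\Delta_\ell f\|_{L^p} \lesssim \|f\|_{\dot B^{3/p+1}_{p,1}}$ directly, forgoing the extra $2^k$ and absorbing it into $\|\Delta_k \nabla g\|_{L^p}$ to recover $\|g\|_{\dot B^s_{p,1}}$.

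The main obstacle is the bookkeeping: one must verify that for each of the five pieces the index conditions of Lemma \ref{Lem:bonyweighttimespace} are compatible with both derivative assignments throughout $s \in (-3\min(1/p, 1/p'), 3/p]$, and that the formally different distributions of the derivative really produce the same endpoint inequality after integrating in time. Once the index arithmetic is settled, the final $\ell^1$-summation in $j$ reduces to a discrete convolution $\sum_k c_j^k d_k$ with $(c_j^k)$ exponentially decaying in $|j-k|$, so Young's inequality for sequences closes the estimate.
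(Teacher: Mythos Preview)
Your proposal is correct and follows precisely the standard Bony-paraproduct route for such commutator estimates. The paper, however, does not supply its own proof of this lemma: it is listed in the Appendix among several harmonic-analysis tools quoted without argument, with the surrounding results referred to \cite{Bah}. What you have written is essentially the textbook proof (cf.\ \cite{Bah}, Lemma~2.100 and its variants): decompose via Bony, treat the four ``easy'' pieces by Lemma~\ref{Lem:bonyweighttimespace}, and handle $[\Delta_j,T_f]\nabla g$ by the first-order Taylor/mean-value argument exploiting $\int |z||h_j(z)|\,dz\lesssim 2^{-j}$. Your index-tracking for the two inequalities (placing the derivative on $g$ versus on $f$ through the two different bounds on $\|\nabla S_{k-1}f\|_{L^\infty}$) is exactly right, and the constraints $s\le 3/p$ and $s>-3\min(1/p,1/p')$ do line up with parts (b) and (c) of Lemma~\ref{Lem:bonyweighttimespace} as you indicate. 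There is nothing to add.
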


We recall the following composite result:
\begin{lem}\label{Lem:non-est}
Let $s>0$ and $1\le p,q,r\le \infty$. Assume that $F\in W^{[s]+3,\infty}_{loc}(\R)$ with  $F(0)=0$.
Then there holds
\beno
\|F(f)\|_{\widetilde{L}^q_T(\dot B^s_{p,r} )}
\le C(1+\|f\|_{L^\infty_T(L^\infty)})^{[s]+2}\|f\|_{\widetilde{L}^q_T(\dot B^s_{p,r} )}.
\eeno
\end{lem}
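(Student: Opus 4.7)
The plan is to prove this classical Meyer-type composition estimate via paralinearization and a telescoping dyadic decomposition. I would first handle the ``static'' version $\|F(f)\|_{\dot B^s_{p,r}} \lesssim (1+\|f\|_{L^\infty})^{[s]+2}\|f\|_{\dot B^s_{p,r}}$; the Chemin--Lerner upgrade follows at the very end by commuting an $L^q_T$-norm past the spatial Bernstein/H\"older steps. Exploiting $F(0)=0$ together with the low-frequency convergence $\lim_{j\to-\infty}\|\dot S_j f\|_{L^\infty}=0$ built into the definition of $\dot B^s_{p,r}$, I would write the telescoping identity
\begin{equation*}
F(f)=\sum_{j\in\Z}\bigl[F(\dot S_{j+1}f)-F(\dot S_j f)\bigr]=\sum_{j\in\Z} m_j\,\ddj f,\qquad m_j:=\int_0^1 F'(\dot S_j f+\tau\,\ddj f)\,d\tau,
\end{equation*}
which converges absolutely in $L^\infty$ since $\|m_j\|_{L^\infty}\lesssim \sup_{|y|\le C\|f\|_{L^\infty}}|F'(y)|$.

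Next I would apply $\ddj_k$ and split the $j$-sum at $j=k-N_0$ for a fixed integer $N_0$ depending only on the support of $\varphi$. On the low-high regime $j\ge k-N_0$ I use H\"older to get $\|\ddj_k(m_j\,\ddj f)\|_{L^p}\lesssim (1+\|f\|_{L^\infty})\|\ddj f\|_{L^p}$, while on the high-low regime $j<k-N_0$ I invoke Bernstein's inequality $N:=[s]+1$ times, so that
\begin{equation*}
\|\ddj_k(m_j\,\ddj f)\|_{L^p}\lesssim 2^{-kN}\|\nabla^N(m_j\,\ddj f)\|_{L^p},
\end{equation*}
and then distribute derivatives by Leibniz. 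The derivatives of $m_j$ are controlled by a Fa\`a di Bruno calculation: using $\|\nabla^n\dot S_j f\|_{L^\infty}+\|\nabla^n\ddj f\|_{L^\infty}\lesssim 2^{jn}\|f\|_{L^\infty}$ for $n\ge 1$, one gets $\|\nabla^\ell m_j\|_{L^\infty}\lesssim 2^{j\ell}(1+\|f\|_{L^\infty})^{\ell+1}\|F'\|_{W^{\ell,\infty}(B_R)}$ with $R\lesssim\|f\|_{L^\infty}$. Since $\ell\le N=[s]+1$, the maximal order of derivatives of $F$ entering is $[s]+2$, matching the hypothesis $F\in W^{[s]+3,\infty}_{\mathrm{loc}}$, and the resulting exponent on $1+\|f\|_{L^\infty}$ is exactly $[s]+2$.

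Assembling both regimes yields
\begin{equation*}
\|\ddj_k F(f)\|_{L^p}\lesssim (1+\|f\|_{L^\infty})^{[s]+2}\Bigl[\sum_{j\ge k-N_0}\|\ddj f\|_{L^p}+\sum_{j<k-N_0}2^{(j-k)N}\|\ddj f\|_{L^p}\Bigr].
\end{equation*}
Multiplying by $2^{ks}$, I observe that $\{2^{ks}\|\ddj_k F(f)\|_{L^p}\}_k$ is a discrete convolution on $\Z$ of $\{2^{js}\|\ddj f\|_{L^p}\}_j$ against an $\ell^1$ kernel: the high-low piece contributes the factor $2^{(j-k)(N-s)}$ which is $\ell^1$-summable because $N-s=[s]+1-s>0$, and the low-high piece contributes $2^{(k-j)s}$ on the bounded range $k\le j+N_0$, which is harmless because $s>0$. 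Young's inequality in $\ell^r(\Z)$ then closes the $\dot B^s_{p,r}$ estimate. Finally, for the Chemin--Lerner norm I take $L^q_T$ inside the $\ell^r$ sum, as dictated by the definition of $\widetilde L^q_T(\dot B^s_{p,r})$: the spatial bounds on $m_j$ hold uniformly in $t$ with constant $(1+\|f\|_{L^\infty_T L^\infty})^{[s]+2}$, which factors outside the $L^q_T$ norm, and the $\ell^r$-Young step commutes with it. The main obstacle is the Fa\`a di Bruno bookkeeping needed to pin down the exponent $[s]+2$ exactly (rather than a larger polynomial power) and to check that the resulting convolution kernel stays $\ell^1$ without logarithmic loss at the borderline, which is precisely why the hypothesis is $s>0$ and $N=[s]+1$; a secondary subtlety is justifying the telescoping at the low-frequency end, which is handled by the $\lim_{j\to-\infty}\|\dot S_j f\|_{L^\infty}=0$ convention from Section~1.
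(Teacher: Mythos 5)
Your argument is the standard Meyer/Bony paralinearization proof of the Besov composition estimate, and it is essentially correct; the paper itself does not prove this lemma but cites it as a classical tool in its appendix (it is Theorem~2.87 in the Bahouri--Chemin--Danchin monograph the paper references). The telescoping $F(f)=\sum_j m_j\,\ddj f$, the $j\lessgtr k-N_0$ split with H\"older on one side and Bernstein plus Leibniz/Fa\`a di Bruno on the other, the $\ell^1\ast\ell^r$ Young step using $[s]+1-s>0$ and $s>0$, and the Chemin--Lerner upgrade by pulling the time-uniform constant $\bigl(1+\|f\|_{L^\infty_T L^\infty}\bigr)^{[s]+2}$ and the $\|F'\|_{W^{[s]+1,\infty}(B_R)}$ factor out of the $L^q_T$ norm, are all sound. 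Two small bookkeeping remarks: the Fa\`a di Bruno estimate actually yields $\|\nabla^\ell m_j\|_{L^\infty}\lesssim 2^{j\ell}(1+\|f\|_{L^\infty})^{\ell}\|F'\|_{W^{\ell,\infty}(B_R)}$ (power $\ell$, not $\ell+1$, since a partition of $\{1,\dots,\ell\}$ has at most $\ell$ blocks), so with $N=[s]+1$ your scheme in fact delivers the sharper exponent $[s]+1$ and only requires $F\in W^{[s]+2,\infty}_{\mathrm{loc}}$; the stated exponent $[s]+2$ and hypothesis $W^{[s]+3,\infty}_{\mathrm{loc}}$ in the lemma are thus harmlessly non-tight, and your proof implies the claimed bound. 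One should also keep in mind that, since $F$ is only locally in $W^{[s]+3,\infty}$, the constant $C$ necessarily depends on $\|f\|_{L^\infty_T L^\infty}$ through $\|F'\|_{W^{N,\infty}(B_R)}$ with $R\sim\|f\|_{L^\infty_T L^\infty}$, so the displayed polynomial factor is a structural refinement rather than a fully explicit dependence; your write-up absorbs this into $C$ exactly as the lemma intends.
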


 In the anisotropic spaces, the above results still hold ture. We refer readers to   \cite{Charve} and \cite{CMZ} for details. We begin with the product estimates:
 
  \begin{prop}\label{prop:pro}
 Let $s, t, \wt s, \wt t, \sigma, \tau\in \mathbb{R}, 2\leq p\leq 4$, and $1\leq r, r_1, r_2\leq \infty$ with $\f 1r=\f1{r_1}+\f1{r_2}$. Then we have the following estimates:

  (a)\, If $\sigma, \tau \leq \f n p$ and $\sigma+\tau>0$, then
  \beno
  &&\sum_{2^j>R_0} 2^{j(\sigma+\tau-\f np)}\|\ddj(f g)\|_{L^r_T (L^p)}\\
  &&\leq
  C(\|f\|^\ell_{\wt L^{r_1}_T(\dot B^{n/2-n/p+\sigma}_{2, 1})}+\|f\|^H_{\wt L^{r_1}_T(\dot B^{\sigma}_{p, 1})})(\|g\|^\ell_{\wt L^{r_2}_T(\dot B^{n/2-n/p+\tau}_{2, 1})}+\|g\|^H_{\wt L^{r_2}_T(\dot B^{\tau}_{p, 1})}).
  \eeno

   (b)\, If $s,\wt s\leq \f np$ and $s+t> n-\f {2n}p$ with $s+t=\wt s+\wt t$ and $\theta\in \mathbb{R}$, then
     \beno
  &&\sum_{2^j\leq R_0} 2^{j(s+t-\f n2)}\|\ddj(f g)\|_{L^r_T (L^2)}\\
  &&\leq
 C(\|f\|^\ell_{\wt L^{r_1}_T(\dot B^{s}_{2, 1})}+\|f\|^H_{\wt L^{r_1}_T(\dot B^{s-n/2+n/p}_{p, 1})})(\|g\|^\ell_{\wt L^{r_2}_T(\dot B^{t}_{2, 1})}+\|g\|^H_{\wt L^{r_2}_T(\dot B^{t-n/2+n/p+\theta}_{p, 1})})\\
&& \quad+C(\|g\|^\ell_{\wt L^{r_2}_T(\dot B^{\wt s}_{2, 1})}+\|g\|^H_{\wt L^{r_2}_T(\dot B^{\wt s-n/2+n/p}_{p, 1})})(\|f\|^\ell_{\wt L^{r_1}_T(\dot B^{\wt t}_{2, 1})}+\|f\|^H_{\wt L^{r_1}_T(\dot B^{\wt t-n/2+n/p}_{p, 1})}).
  \eeno

  (c)\, If $s,\wt s\leq \f n2$ and $s+t>\f n2-\f np$ with $s+t=\wt s+\wt t$, then
       \beno
  &&\sum_{j\in \mathbb{Z}} 2^{j(s+t-\f n2)}\|\ddj(f g)\|_{L^r_T (L^2)} 
 \leq
 C(\|f\|^\ell_{\wt L^{r_1}_T(\dot B^{s}_{2, 1})}+\|f\|^H_{\wt L^{r_1}_T(\dot B^{s-n/2+n/p}_{p, 1})})\|g\|_{\wt L^{r_2}_T(\dot B^{t}_{2, 1})} \\
&& \quad +C(\|g\|^\ell_{\wt L^{r_2}_T(\dot B^{\wt s}_{2, 1})}+\|g\|^H_{\wt L^{r_2}_T(\dot B^{\wt s-n/2+n/p}_{p, 1})})\|f\| _{\wt L^{r_1}_T(\dot B^{\wt t}_{2, 1})} .
  \eeno
\end{prop}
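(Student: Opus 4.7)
The natural route is Bony's paraproduct decomposition $fg = T_fg + T_gf + R(f,g)$ combined with the low/high frequency splitting $f = f^\ell + f^H$, $g = g^\ell + g^H$. This produces up to twelve pieces (three paraproduct/remainder pieces for each of the four cross products). Each piece carries its own Fourier support information, and the strategy is to estimate it in the $L^2$-based Besov norm when only low-frequency factors are involved, in the $L^p$-based Besov norm when only high-frequency factors are involved, and to bridge the two regimes via Bernstein's lemma, which (since $p\ge 2$) gives $\|\dot S_{j_0} f\|_{\dot B^{\sigma}_{p,1}} \lesssim \|\dot S_{j_0} f\|_{\dot B^{\sigma+n/2-n/p}_{2,1}}$ on any bounded frequency band $\{2^j\le R_0\}$.

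\textbf{Step 1 (part (a), high-frequency output).} For $2^j>R_0$ the dominant contributions come from products with at least one high-frequency factor. The pieces $T_{f^H}g^H$, $T_{g^H}f^H$, $R(f^H,g^H)$ are handled by Lemma \ref{Lem:bi-weight} applied in $L^p$-based spaces under the hypotheses $\sigma,\tau\le n/p$ and $\sigma+\tau>0$. For the mixed pieces $T_{f^\ell}g^H$, $T_{g^H}f^\ell$, $R(f^\ell,g^H)$ (and their symmetric counterparts) I will use Bernstein to replace the $\dot B^{n/2-n/p+\sigma}_{2,1}$ norm of $f^\ell$ by its $\dot B^{\sigma}_{p,1}$ counterpart at bounded frequencies and then invoke Lemma \ref{Lem:bonyweighttimespace} again. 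Finally, the purely low-frequency product $f^\ell g^\ell$ has Fourier support contained in $\{|\xi|\lesssim R_0\}$, so only finitely many indices $j$ with $2^j\sim R_0$ contribute to its high-frequency projection; this term is controlled trivially.

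\textbf{Step 2 (part (b), low-frequency output).} When $2^j\le R_0$ every interaction, including high-high, can a priori project onto low frequencies. The paraproducts $T_{f^\ell}g^\ell$ and $T_{g^\ell}f^\ell$ are estimated directly via Lemma \ref{Lem:bonyweighttimespace} in the $L^2$-based spaces under $s\le n/p$ (using Bernstein to handle the fact that the paraproduct hypothesis there is formulated with $n/2$ rather than $n/p$; the gap $n/p-n/2$ is absorbed since frequencies are bounded by $R_0$). The mixed terms $T_{f^\ell}g^H$, $T_{g^H}f^\ell$, and $R(f^\ell,g^H)$ are projected onto low frequencies, hence localized on bounded bands; after Bernstein, their estimate follows from Lemma \ref{Lem:bonyweighttimespace}. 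The decisive piece is the high-high remainder $R(f^H,g^H)$ whose low-frequency projection is estimated by Bernstein going from $L^{p/2}$ into $L^2$ (using $p\le 4$, so $p/2\le 2$), combined with the remainder estimate in $L^{p/2}$-based spaces; the condition $s+t>n-2n/p$ is precisely what ensures summability in this regime and is tight. The alternative indexing $(\wt s,\wt t)$ with $s+t=\wt s+\wt t$ and the parameter $\theta$ simply reflect two different ways to distribute the remainder: one bounds the high-high piece using $\wt s,\wt t$, while one uses $s,t$ for the paraproducts, with the scale relation ensuring the resulting $L^2$ exponent matches $s+t-n/2$.

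\textbf{Step 3 (part (c)).} Here the output $j$ ranges over all of $\Z$, but the structural inputs on $g$ are unsplit (pure $\dot B^{t}_{2,1}$ or $\dot B^{\wt t}_{2,1}$), while $f$ is split. One argues as in (b) for $2^j\le R_0$ and as in (a) for $2^j>R_0$, each time using Bernstein to reconcile the $L^2$ space carrying $g$ with the $L^p$ space carrying $f^H$; the condition $s+t>n/2-n/p$ arises from the remainder acting on an $L^p\times L^2$ pair.

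\textbf{Main obstacle.} The delicate point will be the high-high remainder in (b) when both factors live at high frequencies in $L^p$ yet the output must be measured in $L^2$ at low frequencies. Getting the exponent bookkeeping right (the gain $n/2-n/p$ from Bernstein, the scaling relation $s+t=\wt s+\wt t$, and the role of $\theta$) is where the lemma's sharpness is exhibited; everything else reduces to careful bookkeeping of Fourier supports and standard applications of Lemmas \ref{Lem:Bernstein} and \ref{Lem:bonyweighttimespace}.
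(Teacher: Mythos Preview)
Your plan is essentially correct and follows the standard route; note, however, that the paper itself does not supply a proof of this proposition but simply refers the reader to \cite{Charve} and \cite{CMZ} for the details. What you outline---Bony's decomposition $fg=T_fg+T_gf+R(f,g)$, the four-way split $f=f^\ell+f^H$, $g=g^\ell+g^H$, and systematic use of Bernstein to pass between $L^2$ and $L^p$ on bounded frequency bands---is precisely the methodology of those references, and your identification of the high--high remainder $R(f^H,g^H)$ projected to low frequencies as the critical piece in (b) (using $p\le4$ to embed $L^{p/2}\hookrightarrow L^2$ locally, which is what produces the threshold $s+t>n-2n/p$) is exactly the sharp point of the argument.

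One small correction: in Step 2 you write that the condition $s\le n/p$ needs Bernstein to bridge a gap with the $L^2$-paraproduct hypothesis $s\le n/2$. In fact no bridging is needed there: since $p\ge2$ one has $n/p\le n/2$, so $s\le n/p$ already implies $s\le n/2$ and the $L^2$-based paraproduct lemma applies directly. The Bernstein shifts are only required when moving the \emph{high-frequency} $L^p$-based norms into (or out of) an $L^2$ setting, not for the purely low-frequency pieces.
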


 Let $\{c(j)\}$ be a sequence in $l^1$ with the norm $\|\{c(j)\}\|_{l^1}\leq 1$. Then  the estimates for  the commutators can be stated as
\begin{prop}\label{prop:com}
Let $2\leq p\leq 4, -\f np <s\leq \f n2+1$, $-\f np <\sigma\leq \f np+1$, and $1\leq r, r_1, r_2\leq\infty$ with $\f1 r=\f1{r_1}+\f1{r_2}$ . Then for $2^j>R_0$, it hold
\beno
&&\|[v, \ddj]\cdot \na f\|_{L^r_T (L^p)} \leq C c(j)(2^{-j\sigma}+2^{j(\f n2-\f np-s)})(\|f\|^\ell_{\wt L^{r_2}_T(\dot B^{s}_{2, 1})}+\|f\|^H_{\wt L^{r_2}_T(\dot B^{\sigma}_{p, 1})})\\&&\qquad
\times(\|v\|^\ell_{\wt L^{r_1}_T(\dot B^{n/2+1}_{2, 1})}+\|v\|^H_{\wt L^{r_1}_T(\dot B^{1+n/p}_{p, 1})}),\\
&&\|[v, \ddj]\cdot \na f\|_{L^r_T (L^p)} \leq C c(j)(2^{-j\sigma}+2^{j(\f n2-\f np-s)})(\|f\|^\ell_{\wt L^{r_2}_T(\dot B^{s}_{2, 1})}+\|f\|^H_{\wt L^{r_2}_T(\dot B^{\sigma}_{p, 1})}) \|v\|_{\wt L^{r_1}_T(\dot B^{1+n/p}_{p, 1})}.
\eeno
Moreover, if $-\f n p<s\leq \f np+1$, then
\beno
&&\|[v, \ddj]\cdot \na f\|_{L^r_T (L^2)} \leq C  c(j) 2^{-js} \|f\|_{\wt L^{r_2}_T(\dot B^{s}_{2, 1})} (\|v\|^\ell_{\wt L^{r_1}_T(\dot B^{n/2+1}_{2, 1})}+\|v\|^H_{\wt L^{r_1}_T(\dot B^{1+n/p}_{p, 1})}).
\eeno
 \end{prop}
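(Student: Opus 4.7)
My plan is to prove the three commutator estimates via Bony's paraproduct decomposition, adapted to the hybrid low/high-frequency Besov setting. I would first apply Bony's decomposition to both $v^{k}\partial_{k}f$ and $v^{k}\partial_{k}\dot\Delta_{j}f$ and subtract to obtain
\begin{equation*}
[v,\dot\Delta_{j}]\cdot\nabla f \;=\; [T_{v^{k}},\dot\Delta_{j}]\partial_{k}f \;+\; \bigl(T_{\partial_{k}\dot\Delta_{j}f}v^{k}-\dot\Delta_{j}T_{\partial_{k}f}v^{k}\bigr) \;+\; \bigl(R(v^{k},\partial_{k}\dot\Delta_{j}f)-\dot\Delta_{j}R(v^{k},\partial_{k}f)\bigr),
\end{equation*}
which I shall label $I_{j}$, $II_{j}$, $III_{j}$. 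I would then simultaneously split $v=v^{\ell}+v^{H}$ and $f=f^{\ell}+f^{H}$ per \eqref{eq:decompo}, so that each of the three pieces decomposes into four contributions corresponding to the pairings of low and high parts of $v$ and $f$.

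For $I_{j}$, I would expand $T_{v^{k}}=\sum_{|j'-j|\le 4}S_{j'-1}v^{k}\,\dot\Delta_{j'}$ (using the almost-orthogonality \eqref{orth}), represent $\dot\Delta_{j}$ by its convolution kernel, and apply a first-order Taylor expansion of $S_{j'-1}v^{k}$ inside the convolution to extract a factor of $2^{-j}$, arriving at
\begin{equation*}
\|I_{j}\|_{L^{p}} \;\lesssim\; 2^{-j}\sum_{|j'-j|\le 4}\|\nabla S_{j'-1}v\|_{L^{\infty}}\,\|\dot\Delta_{j'}\nabla f\|_{L^{p}}.
\end{equation*}
For $II_{j}$ both pieces are frequency-localised near $2^{j}$, so I would bound $\|\dot S_{l-1}\partial_{k}\dot\Delta_{j}f\|_{L^{\infty}}$ by Bernstein and pair it with $\|\dot\Delta_{l}v^{k}\|_{L^{p}}$ summed over $|l-j|\le 4$; the piece $\dot\Delta_{j}T_{\partial_{k}f}v^{k}$ is handled identically using \eqref{orth}. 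For $III_{j}$, supported at frequencies $\gtrsim 2^{j}$, I would place the derivative on the $f$-factor and estimate each dyadic shell; here the strict inequalities $s>-n/p$ and $\sigma>-n/p$ are exactly what is needed to sum the contributions above frequency $j$.

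The norm $\|\nabla S_{j'-1}v\|_{L^{\infty}}$ I would split as $\|\nabla S_{j'-1}v^{\ell}\|_{L^{\infty}}+\|\nabla S_{j'-1}v^{H}\|_{L^{\infty}}$; via $L^{2}$-Bernstein the first is controlled by $\|v\|^{\ell}_{\dot B^{n/2+1}_{2,1}}$, while the second, via $L^{p}$-Bernstein together with the embedding $\dot B^{n/p}_{p,1}\hookrightarrow L^{\infty}$, is controlled by $\|v\|^{H}_{\dot B^{n/p+1}_{p,1}}$, producing the first stated estimate. For the second estimate I would use the cruder global bound $\|\nabla v\|_{L^{\infty}}\lesssim\|v\|_{\dot B^{n/p+1}_{p,1}}$. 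On the $f$-side I would split $\|\dot\Delta_{j'}\nabla f\|_{L^{p}}\le\|\dot\Delta_{j'}\nabla f^{\ell}\|_{L^{p}}+\|\dot\Delta_{j'}\nabla f^{H}\|_{L^{p}}$; the high piece yields $c(j)\,2^{-j\sigma}\|f\|^{H}_{\dot B^{\sigma}_{p,1}}$, while the low piece, converted from $L^{2}$ to $L^{p}$ by Bernstein, produces $c(j)\,2^{j(n/2-n/p-s)}\|f\|^{\ell}_{\dot B^{s}_{2,1}}$. Summing over $|j-j'|\le 4$ and using the $\ell^{1}$-convolution property absorbs constants into $c(j)$ and yields the claimed prefactor $c(j)\bigl(2^{-j\sigma}+2^{j(n/2-n/p-s)}\bigr)$. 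The third estimate (target $L^{2}$) is obtained by keeping $L^{2}$-Bernstein on the $f$-side throughout, yielding the cleaner factor $c(j)\,2^{-js}\|f\|_{\dot B^{s}_{2,1}}$; only $-n/p<s\le n/p+1$ is needed to close $III_{j}$ in this case.

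The principal obstacle I anticipate is the bookkeeping at the threshold frequency $R_{0}$: I must verify that each crossed contribution, in particular $R(v^{\ell},f^{H})$ and its mate, is absorbed into the correct product of one $\ell$-norm and one $H$-norm without violating $s\le n/2+1$ or $\sigma\le n/p+1$, which are precisely the endpoints forced by Bernstein on the derivative-carrying factor. The restriction $2^{j}>R_{0}$ simplifies matters crucially because the purely low/low piece $R(v^{\ell},f^{\ell})$ is supported below $R_{0}$ and is therefore annihilated by $\dot\Delta_{j}$, removing the only contribution that would otherwise lack a small factor in $j$; it also provides a uniform lower bound on $j$ that neutralises the otherwise-dangerous sums coming from the $L^{2}\to L^{p}$ Bernstein conversions.
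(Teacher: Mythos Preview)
Your proposal is correct and follows the standard paradifferential route for such commutator estimates. Note that the paper itself does not supply a proof of this proposition: it is listed in the appendix with the remark ``We refer readers to \cite{Charve} and \cite{CMZ} for details,'' and those references carry out precisely the Bony decomposition you describe, splitting $[v,\dot\Delta_j]\cdot\nabla f$ into the three pieces $I_j$, $II_j$, $III_j$ and then handling the hybrid low/high structure via Bernstein. One small correction to your final paragraph: the remainder $R(v^\ell,\partial_k f^\ell)$ is not literally annihilated by $\dot\Delta_j$ for every $2^j>R_0$, since its Fourier support lies in a ball of radius a fixed multiple of $R_0$; but only finitely many $j$ near the threshold are affected, and there the contribution is harmless and absorbed into the constant, so this does not alter the argument.
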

 
 \begin{prop}\label{prop:com1}
 Under the assumption of Proposition \ref{prop:com},  if $S\in S^{m}_{1,0}$,  then  for $2^j>R_0$,
\beno
&&\|[S, \ddj]\cdot \na f\|_{L^r_T (L^p)} \leq C c(j)(2^{-j\sigma}+2^{j(\f n2-\f np-s)})(\|f\|^\ell_{\wt L^{r_2}_T(\dot B^{s+m}_{2, 1})}+\|f\|^H_{\wt L^{r_2}_T(\dot B^{\sigma+m}_{p, 1})}).
\eeno
And for $2^j\leq R_0$,
if $-\f n p<s\leq \f np+1$, then
\beno
&&\|[S, \ddj]\cdot \na f\|_{L^r_T (L^2)} \leq C  c(j) 2^{-js} (\|f\|^\ell_{\wt L^{r_2}_T(\dot B^{s+m}_{2, 1})}+\|f\|^H_{\wt L^{r_2}_T(\dot B^{\sigma+m}_{p, 1})}) ,
\eeno
 \end{prop}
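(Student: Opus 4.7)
The plan is to reduce Proposition \ref{prop:com1} to the already-established Proposition \ref{prop:com} by decomposing the pseudo-differential operator $S=a(x,D)\in S^m_{1,0}$ into pieces of the form ``function of $x$ times Fourier multiplier of order $m$.'' Concretely, I would use a Coifman--Meyer / spherical-harmonic expansion of the symbol
\[
a(x,\xi) = \sum_{l} \alpha_l(x)\,\beta_l(\xi) + r(x,\xi),
\]
where $\beta_l\in S^m$ are pure Fourier multipliers, $\{\alpha_l\}_l\subset C^\infty_b$ have norms in $\dot B^{n/2+1}_{2,1}\cap \dot B^{1+n/p}_{p,1}$ (the space in which $v$ lives in Proposition \ref{prop:com}) that are summable in $l$ with total bound depending only on finitely many seminorms of $a$, and $r$ is a smoothing remainder of arbitrarily negative order. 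The crucial point is that each $\beta_l(D)$ commutes with $\ddj$, giving the identity
\[
[S,\ddj](\nabla f) = \sum_{l} [\alpha_l,\ddj]\bigl(\beta_l(D)\nabla f\bigr) + R_j(\nabla f),
\]
with $R_j$ a harmless smoothing tail.

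Next, I would apply Proposition \ref{prop:com} to each summand with $v=\alpha_l$ and $\nabla f$ replaced by $\beta_l(D)\nabla f$. Since $\beta_l(D)$ is a Fourier multiplier of order $m$ commuting with every dyadic block, one has $\|\beta_l(D) f\|_{\dot B^{s}_{2,1}}\lesssim \|f\|_{\dot B^{s+m}_{2,1}}$ and likewise for the high-frequency $L^p$-Besov piece. Plugging this in, the right-hand side of Proposition \ref{prop:com} produces, for each $l$,
\[
C\,c(j)\bigl(2^{-j\sigma}+2^{j(n/2-n/p-s)}\bigr)\bigl(\|f\|^{\ell}_{\dot B^{s+m}_{2,1}}+\|f\|^{H}_{\dot B^{\sigma+m}_{p,1}}\bigr)\,\|\alpha_l\|_{\dot B^{n/2+1}_{2,1}\cap \dot B^{1+n/p}_{p,1}},
\]
and summing over $l$ absorbs the $\alpha_l$-norms into the universal constant $C$. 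The low-frequency case $2^j\leq R_0$ is treated identically using the second inequality in Proposition \ref{prop:com}, the target $L^2$ norm being compatible with the same decomposition since only finitely many dyadic frequencies are involved.

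The main obstacle will be verifying the $\ell^1$-summability of the $\alpha_l$'s in the precise mixed Besov norm demanded by Proposition \ref{prop:com}; while the existence of such a symbol expansion is standard, the careful bookkeeping requires that $a$ be sufficiently smooth in the angular variable, which is guaranteed by the definition of $S^m_{1,0}$ and enters the constant only through finitely many seminorms. An alternative route that avoids the explicit symbol decomposition is to invoke pseudo-differential calculus directly: the symbol of $[S,\ddj]$ has the asymptotic expansion $\tfrac{1}{i}\,2^{-j}(\partial_\xi\varphi)(2^{-j}\xi)(\partial_x a)(x,\xi)$ plus strictly lower-order terms, representing an operator of order $m-1$ concentrated in frequency near $2^j$. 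The same paraproduct bookkeeping that proves Proposition \ref{prop:com} then applies mutatis mutandis, the gain of one derivative inherent to the commutator being exactly what converts the nominal order $m$ of $S$ into the shift $+m$ appearing in the Besov indices of $f$.
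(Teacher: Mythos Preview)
The paper does not supply a proof of this proposition; it is listed among the appendix results attributed to \cite{Charve} and \cite{CMZ}, so there is no in-paper argument to compare against.

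Your primary reduction to Proposition~\ref{prop:com} has a real gap. Membership in $S^m_{1,0}$ only guarantees that the coefficient functions $\alpha_l$ arising from any tensor expansion are $C^\infty_b$ in $x$; it does \emph{not} place them in the homogeneous Besov spaces $\dot B^{n/2+1}_{2,1}$ or $\dot B^{1+n/p}_{p,1}$ that Proposition~\ref{prop:com} demands of $v$. A bounded smooth function with no decay---e.g.\ $\alpha(x)=\sin x_1$---lies in no $\dot B^{s}_{q,1}$ with $q<\infty$, so the claimed $\ell^1$-summability of the $\alpha_l$ in that norm cannot follow from the $S^m_{1,0}$ hypothesis alone. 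Your alternative symbol-calculus route avoids this obstruction because the relevant control on $\partial_x a$ is $L^\infty$-based, but then you are no longer literally invoking Proposition~\ref{prop:com}; you must redo the paraproduct bookkeeping with $C^N_b$ bounds on the coefficients, which is a separate (if routine) computation that your sketch does not carry out.

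It is also worth flagging two oddities in the stated inequality: the right-hand side carries $L^{r_2}_T$ while the left has $L^r_T$, with no factor in $L^{r_1}_T$; and in every place Sections~3--4 use estimates of this flavor, the operator $S$ is a pure Fourier multiplier ($\Lambda$, $\Lambda^{-1}\div$, $\cQ$, $\cP$), for which $[S,\ddj]=0$ identically. The commutators actually needed there are of the form $\ddj[S,v\cdot\nabla]f$, handled by the machinery of Proposition~\ref{prop:com} together with the Besov boundedness of $S$---a different and simpler statement than the general $S^m_{1,0}$ one you are attempting.
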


Finally the composite result can be proved:
 \begin{prop}\label{prop:comp}
 Let $2\leq p\leq 4, s, \sigma>0,$ and $s\geq \sigma-\f n2+\f np, r\geq 1$. Assume that $F\in W^{[s]+2}_{loc}\cap W^{[\sigma]+2}_{loc}$ with $F(0)=0$. Then there holds
 \beno
 &&\|F(f)\|^\ell_{\wt L^r_T(\dot B^s_{2,1})}+ \|F(f)\|^H_{\wt L^r_T(\dot B^\sigma_{p,1})}\\
 &&\leq C(1+\|f\|^\ell_{\wt L^\infty_T(\dot B^{n/p}_{2,1})}+\|f\|^H_{\wt L^\infty_T(\dot B^{n/p}_{p,1})})^{\max([s],[\sigma])+1}(\|f\|^\ell_{\wt L^r_T(\dot B^s_{2,1})}+ \|f\|^H_{\wt L^r_T(\dot B^\sigma_{p,1})}).
 \eeno
 For any $s>0$ and $p\geq 1$, there holds
 \beno
 \|F(f)\|_{\wt L^r_T(\dot B^s_{p,1})}\leq C(1+\|f\|_{L^\infty_T(L^\infty)})^{[s]+1}\|f\|_{\wt L^r_T(\dot B^s_{p,1})}.
 \eeno
 \end{prop}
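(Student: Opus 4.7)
The plan is as follows. The second statement is the classical Moser-type estimate in homogeneous Besov spaces, which is precisely Lemma \ref{Lem:non-est}, so I concentrate on the genuinely new hybrid estimate.

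I would begin with the standard telescoping identity, which is valid because $F(0)=0$ and $S_q f\to 0$ in $\cS'$ as $q\to -\infty$:
\[
F(f)=\sum_{q\in\Z}\bigl(F(S_{q+1}f)-F(S_q f)\bigr)=\sum_{q\in\Z} m_q\,\Delta_q f,\qquad m_q:=\int_0^1 F'\bigl(S_q f+\tau\Delta_q f\bigr)\,d\tau.
\]
Then I would apply $\dot\Delta_j$ and split the summation in $q$ according to whether $2^q$ is above or below the truncation threshold $R_0$, and symmetrically for $j$. The elementary observation behind the whole proof is that in each block $\dot\Delta_j(m_q\Delta_q f)$ the bulk of the mass in $q$ comes from $q\gtrsim j$, because $m_q$ is essentially a smooth function of $S_q f$, so the product $m_q\Delta_q f$ is spectrally concentrated in $|\xi|\lesssim 2^q$.

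The core of the argument is to feed each product $m_q\,\Delta_q f$ into the hybrid product estimate of Proposition \ref{prop:pro}: for the high-frequency output I would use part (a) with the regularity carried by $f$, and for the low-frequency output I would use parts (b) and (c) with the free parameter $\theta$ chosen to absorb the Bernstein loss induced by the hybrid splitting. This requires bounds on $m_q$, and, after iterating, on the higher $q$-derivatives that appear through the iterated chain rule, all of which reduce to controlling $\|F^{(k)}(g)\|_{L^\infty}$ together with the embedding
\[
\|f\|_{L^\infty}\lesssim\|f\|^\ell_{\dot B^{n/p}_{2,1}}+\|f\|^H_{\dot B^{n/p}_{p,1}},
\]
which follows dyadically from Bernstein's lemma. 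Iterating the composition Leibniz rule up to $\max([s],[\sigma])+1$ derivatives of $F$ then generates exactly the stated power of $1+\|f\|^\ell_{\dot B^{n/p}_{2,1}}+\|f\|^H_{\dot B^{n/p}_{p,1}}$, and the time-integrability in $\widetilde L^r_T$ is recovered by applying H\"older to the integral defining $m_q$.

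The main obstacle, as is typical for hybrid composition estimates, is the mixed regime: controlling $\dot\Delta_j F(f)$ for $2^j\le R_0$ when the input block $\Delta_q f$ is high-frequency, and symmetrically. Here I would apply Bony's decomposition $m_q\Delta_q f=T_{m_q}\Delta_q f+T_{\Delta_q f}m_q+R(m_q,\Delta_q f)$ and observe that only the remainder $R(m_q,\Delta_q f)$ can produce frequencies strictly below $2^q$; estimating it requires exactly the hypothesis $s\ge\sigma-\tfrac n2+\tfrac np$, which matches the Bernstein loss when upgrading an $L^p$-based bound on a high-frequency factor to an $L^2$-based bound on the low-frequency output. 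This cross inequality is encoded in part (b) of Proposition \ref{prop:pro} via a suitable choice of the parameter $\theta$, and once it is in hand the remaining dyadic summations in $\ell^1(j)$ close by a standard convolution argument, yielding the claimed hybrid bound.
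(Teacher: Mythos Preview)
The paper does not actually prove Proposition \ref{prop:comp}: it is listed in the appendix among results that the authors quote from \cite{Charve} and \cite{CMZ} (``We refer readers to \cite{Charve} and \cite{CMZ} for details''), so there is no in-paper proof to compare against. Your outline---Meyer's telescoping linearization $F(f)=\sum_q m_q\Delta_q f$, the spectral localization $\dot\Delta_j(m_q\Delta_q f)=0$ unless $q\ge j-N_0$, and the splitting of the $j$- and $q$-sums according to the threshold $R_0$---is exactly the scheme used in those references, and your identification of the cross regime (low-frequency output, high-frequency input) as the place where the hypothesis $s\ge\sigma-\tfrac n2+\tfrac np$ enters is correct: that inequality is precisely the Bernstein compensation needed when one upgrades $\|\Delta_q f\|_{L^p}$ to an $L^2$-based bound on $\dot\Delta_j$ with $2^j\le R_0$.

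Two small remarks. First, you say the second estimate ``is precisely Lemma \ref{Lem:non-est}'': the statements differ in the exponent ($[s]+1$ here versus $[s]+2$ there), which is immaterial for the applications but worth noting. Second, invoking the full Proposition \ref{prop:pro} on each block $m_q\Delta_q f$ is heavier than necessary; in the standard argument one only uses $\|m_q\|_{L^\infty}\le C\sup_{|y|\le\|f\|_{L^\infty}}|F'(y)|$ together with the spectral support observation, and the higher powers of $1+\|f\|_{L^\infty}$ come from iterating this on $F',F'',\dots$ when $s$ (or $\sigma$) exceeds $1$. Your phrase ``iterating the composition Leibniz rule'' gestures at this, but the cleaner bookkeeping is the one in \cite{Bah}, Theorem 2.61, adapted to the hybrid splitting as in \cite{CMZ}.
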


 For the  Stokes equations, the maximum regularity estimate can be concluded as
 \begin{prop}\label{stokesprop}
{\sl Let $p\in (1,\infty)$, and $s\in \mathbb{R}$.
Let $u_0\in \dot{B}^s_{p,1}(\R^3)$ be a divergence-free field and
$g\in\wt{L}^1_T(\dot{B}^s_{p,1})$. If $u$ solves
\begin{equation}\label{stokes}
 \left\{\begin{array}{l}
\displaystyle \pa_t u -\mu\Delta u+\na \Pi=g,\\
\displaystyle \div u = 0,\\
\displaystyle u|_{t=0}=u_0,
\end{array}\right.
\end{equation}
then \eqref{stokes} has a unique solution $u$ so that  \beno
\|u\|_{\wt{L}^\infty_T(\dot{B}^s_{p,1})}
+\mu\|u\|_{L^1_T(\dot{B}^{s+2}_{p,1})}+\|\na\Pi\|_{L^1_T(\dot{B}^s_{p,1})}
\leq \|u_0\|_{\dot{B}^s_{p,1}} + C\|g\|_{L^1_T(\dot{B}^s_{p,1})}. \eeno}
\end{prop}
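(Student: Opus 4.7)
\medskip

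\noindent\textbf{Proof proposal for Proposition \ref{stokesprop}.} My plan is to decouple the velocity from the pressure using the Helmholtz projector $\cP = I+\na(-\Delta)^{-1}\div$ and its complement $\cQ = -\na(-\Delta)^{-1}\div$, and then invoke the heat-equation maximal regularity \eqref{eq:heat} which is already at our disposal. Since $\div u_0=0$ and $\div u=0$, applying $\cP$ to the first equation of \eqref{stokes} kills the pressure term and leaves the pure heat equation
\begin{equation*}
\d_t u - \mu\Delta u = \cP g,\qquad u|_{t=0}=u_0,
\end{equation*}
while applying $\cQ$ gives the algebraic relation $\na \Pi = \cQ g$.

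Next I would apply the estimate \eqref{eq:heat} with $m=\infty$ and $m=1$ to the heat problem above in the space $\dot B^s_{p,1}$, obtaining
\begin{equation*}
\|u\|_{\wt L^\infty_T(\dot B^s_{p,1})}+\mu\|u\|_{\wt L^1_T(\dot B^{s+2}_{p,1})}\le C\bigl(\|u_0\|_{\dot B^s_{p,1}}+\|\cP g\|_{\wt L^1_T(\dot B^s_{p,1})}\bigr).
\end{equation*}
Since $\wt L^1_T(\dot B^s_{p,1})= L^1_T(\dot B^s_{p,1})$ (the indices $q=1$ and $r=1$ coincide, so Minkowski gives equality), the right-hand side is controlled by $\|g\|_{L^1_T(\dot B^s_{p,1})}$ once we know that $\cP$ is bounded on $\dot B^s_{p,1}$. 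This in turn follows from $p\in(1,\infty)$ via the Mikhlin--H\"ormander multiplier theorem applied frequency block by frequency block, the classical fact that the Riesz transforms are bounded on $L^p$ for $1<p<\infty$; the same argument gives $\|\na\Pi\|_{L^1_T(\dot B^s_{p,1})}=\|\cQ g\|_{L^1_T(\dot B^s_{p,1})}\le C\|g\|_{L^1_T(\dot B^s_{p,1})}$.

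For the existence part I would regularize the data (e.g.\ truncate in frequency), solve the resulting smooth problem, and pass to the limit using the a priori bound just derived; alternatively one may write $u$ explicitly as $u(t)=e^{t\mu\Delta}u_0+\int_0^t e^{(t-\tau)\mu\Delta}\cP g(\tau)\,d\tau$, which by standard semigroup theory belongs to $\wt\cC_b(\dot B^s_{p,1})$. Uniqueness is immediate by linearity: the difference of two solutions satisfies the same Stokes system with zero data, and the a priori estimate forces it to vanish. The only nontrivial point is the $L^p$-boundedness of the Riesz transforms, which is the reason for the restriction $p\in(1,\infty)$; everything else is a routine consequence of \eqref{eq:heat}.
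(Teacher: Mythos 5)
Your argument is correct, and it is the standard one: project with $\cP$ to kill the pressure and reduce to the heat equation for $u$, project with $\cQ$ to read off $\na\Pi=\cQ g$ algebraically, then apply the maximal regularity estimate \eqref{eq:heat} together with the boundedness of the Leray projectors on $\dot B^s_{p,1}$. The paper itself does not prove Proposition \ref{stokesprop}; it is stated in the appendix as background, so there is no ``paper proof'' to compare against.

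One small clarification on a point you flag as the crux: the boundedness of $\cP$ and $\cQ$ on $\dot B^s_{p,1}$ does \emph{not} in fact require $p\in(1,\infty)$, and this is exactly why the block-by-block argument is the right one. Since $\dot\Delta_j$ localizes to the annulus $|\xi|\sim 2^j$, the symbol $\varphi(2^{-j}\xi)\,\xi_k\xi_l/|\xi|^2$ is a smooth compactly supported function whose inverse Fourier transform has an $L^1$-norm independent of $j$ (by scaling), so each block operator $\dot\Delta_j\cP$ is bounded on $L^p$ for \emph{all} $1\le p\le\infty$, uniformly in $j$. The global $L^p$-boundedness of the Riesz transforms for $1<p<\infty$ (which you also invoke) is a stronger fact that is not needed here; the restriction $p\in(1,\infty)$ in the statement of Proposition \ref{stokesprop} is therefore not forced by this step, and you should not present the two justifications as interchangeable. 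Apart from that cosmetic point, and the harmless discrepancy between the constant $C$ that \eqref{eq:heat} puts in front of $\|u_0\|$ and the constant $1$ in the statement (the heat semigroup is a contraction on each dyadic block, so $1$ is indeed attainable), the proof is complete.
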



\begin{thebibliography}{9999}

\bibitem{Bah} H. Bahouri, J.-Y. Chemin and R. Danchin, {\it Fourier analysis and nonlinear partial differential equations}, Fundamental Principles of Mathematical Sciences, 343, Springer, Heidelberg, 2011.

\bibitem{Bony} J.-M. Bony, {\it Calcul symbolique et propagation des singulariti\'{e}s pour les \'{e}quations aux d\'{e}riv\'{e}es partielles non lin\'{e}aires}, Ann. de l'Ecole Norm. Sup., 14 (1981), 209-246.

\bibitem{Can} M. Cannone, {\it A generalization of a theorem by Kato on Naiver-Stokes equations},
Revista Mat. Iber., 13 (1997), 515-541.

\bibitem{CMP} M. Cannone, Y. Meyer and F. Planchon, {\it Solutions autosimilaires des {\'e}quations de Navier-Stokes}, {S{\'e}minaire ``{\'E}quations aux D{\'e}riv{\'e}es Partielles" de l'{\'E}cole polytechnique}, Expos{\'e} VIII, 1993-1994.

\bibitem{Charve} F. Charve and R. Danchin, {\it A global existence result for the compressible Navier-Stokes equations in the critical $L^p$ framework}, Arch. Ration. Mech. Anal., 198 (2010), 233-271.


\bibitem{Chemin-JDE} J.-Y. Chemin and N. Lerner, {\it Flot de champs de vecteurs non lipschitziens et \'{e}quations de Navier-Stokes}, J. Differential Equations, 121 (1992), 314-328.

\bibitem{CMZ}Q. Chen, C. Miao and Z. Zhang, {\it Global well-posedness for compressible Navier-Stokes equations with highly oscillating initial velocity}, Comm. Pure Appl. Math., 63 (2010), no. 9, 1173-1224.

 

\bibitem{Dan-Inve} R. Danchin, {\it Global existence  in  critical spaces for compressible Navier-Stokes equations}, Invent. Math., 141 (2000), 579-614.

\bibitem{Dan-CPDE} R. Danchin, {\it Local theory in critical spaces for compressible viscous and heat-conductive gases}, Comm. Partial Differential Equations, 26 (2001), 1183-1233.

\bibitem{Dan-ARMA}  R. Danchin, {\it Global existence in critical spaces for flows of compressible viscous and heat-conductive gases}, Arch. Ration. Mech. Anal., 160 (2001), 1-39.



\bibitem{Dan-CPDE07} R. Danchin, {\it Well-posedness in critical spaces for barotropic viscous fluids with truly not constant density}, Comm. Partial Differential Equations, 32 (2007), 1373-1397.


\bibitem{Dan-He} R. Danchin and L. He, {\it The incompressible limit in $L^p$ type critical spaces}, Math. Ann., 64 (2016), 1-38.

\bibitem{DX} R. Danchin and J. Xu, {\it Optimal Time-decay Estimates for the Compressible Navier-Stokes Equations in the Critical $L^p$Framework}, Arch. Ration. Mech. Anal., 224 (2017), 53-90.
\bibitem{DUYZ} R. Duan, S. Ukai, T. Yang and H. Zhao, {\it Optimal convergence rates for the compressible Navier-Stokes equations with potential forces}, Math. Models Methods Appl. Sci., 17 (2007), no. 5, 737-758.
 
\bibitem{FZZ1} D. Fang, T. Zhang and R. Zi, {\it Decay estimates for isentropic compressible Navier-Stokes equations in bounded domain}, J. Math. Anal. Appl., 386 (2012), no. 2, 939-947.

\bibitem{FZZ} D. Fang, T. Zhang and R. Zi, {\it Global solutions to the isentropic compressible Navier-Stokes equations with a class of large initial data},  arXiv:1608.06447.

\bibitem{Has} B. Haspot, {\it Existence of global strong solutions in critical spaces for barotropic viscous fluids}, Arch. Ration. Mech. Anal., 202 (2011), 427-460.


 

\bibitem{Huang} X. Huang, J. Li and Z. Xin,  {\it Global well-posedness of classical solutions with large oscillations and vacuum to the three dimensional isentropic compressible Navier-Stokes equations}, Comm. Pure Appl. Math., 65 (2012), no. 4, 549-585.

\bibitem{Huang1} X. Huang and J. Li,  {\it Existence and blowup behavior of global strong solutions to the two-dimensional barotrpic compressible Navier-Stokes system with vacuum and large initial data}, J. Math. Pures Appl., (9) 106 (2016), no. 1, 123-154. 
 

\bibitem {Kobayashi1} Y. Kagei and T. Kobayashi, {\it  On large time behavior of solutions to the compressible Navier?Stokes equations in the half space in $\R^3$}, Arch. Ration. Mech. Anal., 165 (2002), 89-159.

\bibitem {Kobayashi2}  Y. Kagei and T. Kobayashi, {\it Asymptotic behavior of solutions of the compressible Navier-Stokes equations on the half space}, Arch. Ration. Mech. Anal., 177 (2005), 231-330.


\bibitem {Kobayashi3} T. Kobayashi, {\it Some estimates of solutions for the equations of motion of compressible viscous fluid in an exterior domain in $\R^3$}, J. Differential Equations, 184 (2002), 587-619. 



\bibitem {Kobayashi4} T. Kobayashi and Y. Shibata, {\it  Decay estimates of solutions for the equations of motion of compressible viscous and heat-conductive gases in an exterior domain of $\R^3$}, Comm. Math. Phys., 200 (1999), 621-659.

 

\bibitem{MN1}  A. Matsumura and T. Nishida, {\it The initial value problem for the equations of motion of compressible viscous and heat-conductive fluids}, Proc. Japan Acad. Ser. A Math. Sci., 55 (1979), 337-342.
 
 
\bibitem{MN2} A. Matsumura and T. Nishida, {\it The initial value problem for the equations of motion of viscous and heat-conductive gases}, J. Math. Kyoto Univ., 20 (1980), 67-104. 

\bibitem{Nash} J. Nash, {\it Le probl\`{e}me de Cauchy pour les \'{e}quations diff\'{e}rentielles d'un fluide g\'{e}n\'{e}ral}, Bulletin de la Soc. Math. de France, 90 (1962), 487-497.

\bibitem{PZ}  M. Paicu and P. Zhang, {\it Global solutions to the 3-D incompressible inhomogeneous Navier-Stokes system}, J. Funct. Anal., 262 (2012), no. 8, 3556-3584.

\bibitem{Ponce} G. Ponce, {\it Global existence of small solution to a class of nonlinear evolution equations}, Nonlinear Anal. TMA, 9 (1985), 339-418.

\bibitem{Schon} M. Schonbek, {\it $L^2$  decay for weak solutions of the Navier-Stokes equations}, 
Arch. Ration. Mech. Anal., 88 (1985), no. 3, 209-222. 


\bibitem{SWZ1} Y. Sun, C. Wang and Z. Zhang, {\it A Beale-Kato-Majda blow-up criterion for the 3-D compressible Navier-Stokes equations}, J. Math. Pure Appl., 95 (2011), 36-47.

\bibitem{SWZ2}  Y. Sun, C. Wang and Z. Zhang, {\it A Beale-Kato-Majda criterion for three dimensional compressible viscous heat-conductive flows}, Arch. Ration. Mech. Anal., 201 (2011), 727-742.
 
  



\bibitem{VK}V. Vaigant and A. Kazhikhov, {\it On existence of global solutions to the two- dimensional Navier-Stokes equations for a compressible viscous fluid}, Sib. Math. J., 36 (1995), no.6, 1283-1316.

\bibitem{villani} C. Villani, {\it Hypocoercivity}, Mem. Amer. Math. Soc., 202 (2009), no. 950.

\bibitem{Zhang} T. Zhang, {\it Global solutions of compressible Navier-Stokes equations with a density-dependent viscosity coefficient}, J. Math. Phys., 52 (2011), no. 4, 043510, 26 pp.


 
\end{thebibliography}
\end{document}